\newcommand{\NN}{\ensuremath{\mathbb{N}}}
\newcommand{\RR}{\ensuremath{\mathbb{R}}}
\renewcommand{\phi}{\varphi}
\newcommand{\eps}{\ensuremath{\epsilon}}
\newcommand{\dD}{\ensuremath{\mathcal{D}}}
\newcommand{\tT}{\ensuremath{\mathcal{T}}}
\newtheorem{theorem}{Theorem}
\newtheorem{corollary}[theorem]{Corollary}
\newtheorem{definition}[theorem]{Definition}
\newtheorem{lemma}[theorem]{Lemma}
\newtheorem{remark}[theorem]{Remark}
\title[Local pathwise solutions to SEEs]
{Local pathwise solutions to stochastic evolution equations driven by fractional Brownian motions with Hurst parameters $H\in (1/3,1/2]$}
\author{Mar\'{\i}a J. Garrido-Atienza}
\address[Mar\'{\i}a J. Garrido-Atienza]{Dpto. Ecuaciones Diferenciales y An\'alisis Num\'erico\\
Universidad de Sevilla, Apdo. de Correos 1160, 41080-Sevilla,
Spain} \email[Mar\'{\i}a J. Garrido-Atienza]{mgarrido@us.es}
\author{Kening Lu}
\address[Kening Lu]{346 TMCB\\
Brigham Young University, Provo, UT 84602, USA} \email[Kening
Lu]{klu@math.byu.edu}
\author{Bj{\"o}rn Schmalfu{\ss }}
\address[Bj{\"o}rn Schmalfu{\ss }]{Institut f\"{u}r Stochastik\\
Friedrich Schiller Universit{\"a}t Jena, Ernst Abbe Platz 2, 77043\\
Jena,
Germany\\
 }
\email[Bj{\"o}rn Schmalfu{\ss }]{bjoern.schmalfuss@uni-jena.de}
\subjclass[2000]{Primary: 60H15; Secondary: 60H05, 60G22, 26A33, 26A42.}
\keywords{Stochastic PDEs, Hilbert-valuedfractional Brownian motion, pathwise solutions. \\
This work was partially supported by MTM2011-22411, FEDER founding (M.J.
Garrido-Atienza and B. Schmalfu\ss), and by NSF0909400 (K. Lu). }
\begin{document}

\begin{abstract}
In this article we are concerned with the study of the existence and uniqueness of pathwise mild solutions to evolutions equations driven by a H\"older continuous function with H\"older exponent in $(1/3,1/2)$. Our stochastic integral is  a generalization of the well-known Young integral. To be more precise, the integral is defined by using a fractional integration by parts formula and it involves a tensor for which we need to formulate a new equation. From this it turns out that we have to solve a system consisting in a path and an area equations. In this paper we prove the existence of a unique {\it local} solution of the system of equations. The results can be applied to stochastic evolution equations with a non-linear diffusion coefficient driven by a fractional Brownian motion with Hurst parameter in $(1/3,1/2]$, which is particular includes white noise.
\end{abstract}

\maketitle

\section*{\today}

\section{Introduction}

In this article, we shall focus on the study of a local solution for the following kind of stochastic evolution equations
\begin{align}\label{localeq00}
\left\{
\begin{array}{ll}
du(t) &= Au(t) dt  + G(u(t))d\omega(t),\\
u(0) &= u_0,
\end{array}\right.
\end{align}
in a Hilbert--space $V$, where the noise input $\omega$ is a H\"older continuous function with
H\"older exponent in the interval $(1/3, 1/2)$, $A$ is the infinitesimal
generator of an analytic semigroup $S(\cdot)$ on $V$ and  $G$ is a nonlinear term
satisfying certain assumptions which will be described in the
next sections.
As a particular case of driving noises we  can consider a
fractional Brownian motion $B^H$ with Hurst parameter $H\in (1/3,1/2]$. To be more precise, we will study (\ref{localeq00}) in the sense of mild solutions given by
\begin{equation}\label{localeq01}
  u(t)=S(t)u_0+\int_0^tS(t-r)G(u(r))d\omega.
\end{equation}
Our interpretation of pathwise is that we obtain a solution of these stochastic equations which does not produce exceptional sets depending on the initial conditions. In the classical theory of stochastic evolution equations, i.e., stochastic evolution equations (SEEs) driven by Brownian motion $B^{1/2}$, stochastic Ito integrals are constructed to be a limit in probability of particular random variables defined only almost surely, where the exceptional sets may depend on the initial conditions, which is in contradiction with the cocycle property needed to define a random dynamical system. Pathwise results for that classical theory are only available for the white noise case ($G={\rm id}$) and a few special cases when $u\mapsto G(u)$ is  linear.

During the last two decades different integration theories have been developed to treat more general noise inputs, and in particular, for tackling the fractional Brownian motion $B^H$. One of these attempts is given by the {\em Rough Path Theory}, and we refer to Lyons and Qian \cite{Lyons} and Friz and Victoir \cite{FV10} for a comprehensive presentation of this theory. Some interesting papers dealing with the study of SEEs by using the rough path theory are \cite{caruana}, \cite{caruana2}, \cite{FO11}, \cite{GuLeTin}, \cite{HinZah09}, \cite{DGT}  and \cite{GuTin} among others. In particular, in this last paper the authors proved the existence of local mild solutions of stochastic SEEs driven by rough paths  for $\beta$-H{\"o}lder--continuous paths ($\beta\in (1/3,1/2]$) with a special quadratic nonlinearity.  \\
A different technique called {\em Fractional Calculus} was developed by Z{\"a}hle  \cite{Zah98}, who considered for a fractional Brownian motion with $H>1/2$ the well-known Young integral.
In contrast to the Ito-or Stratonovich integral, that integral can be  defined in a {\em pathwise sense}, given by  fractional derivatives, which allows a pathwise estimate of the integrals in terms of the integrand and the integrator using special norms. In \cite{NuaRas02} it is shown the existence and uniqueness of the solution of a finite-dimensional stochastic differential equation driven by a fractional  Brownian motion for $H>1/2$. These results were extended in \cite{MasNua03} to show the existence of mild solutions for SEEs driven by fractional  Brownian motion for $H>1/2$.

Recently Hu and Nualart \cite{HuNu09} have proved an existence and uniqueness result for finite-dimensional stochastic differential equations having coefficients which are sufficiently smooth and driven by a fractional Brownian motion $B^H$ with $H\in (1/3,1/2]$, for which they needed   to formulate a second equation for the so-called {\em area} in the space of tensors. In our article we adapt the techniques in \cite{HuNu09} to obtain a mild solution for (\ref{localeq00}). However,
there are significant differences between our setting and the one in  \cite{HuNu09}, as for instance that in order to define the area equation in the infinite-dimensional setting we have to construct an area object $\omega \otimes_S \omega$, depending on the noise path $\omega$ as well as on the semigroup $S$, satisfying useful properties as  the Chen--equality.\\
Under general hypothesis on the nonlinearity $G$ we derive the existence and uniqueness of a {\it local} pathwise mild solution $u$ to (\ref{localeq00}). However, global existence is missing in this general context. Under some more restrictive conditions on $G$, in a forthcoming paper we will obtain the existence and uniqueness of a {\it global} mild pathwise solution, which in particular will guarantee that stochastic evolution equations like (\ref{localeq00}) and driven by an fBm $B^H$ with $H\in (1/3,1/2]$ generate random dynamical systems, a challenging and rather open problem to the best of our knowledge.

The article is organized as follows. In Section 2 we give the analytical background to present our theory. In Section 3 we present the so called fractional integration by parts method. Using this technique we can introduce pathwise stochastic integrals allowing us to formulate pathwise stochastic differential equations.
In Section 4 we introduce mild path--area solutions.  In addition, we formulate and solve a fixed-point equation having two components, a path- and an area-component. The role of the semigroup $S$ in the area equation will be given in terms of a particular tensor object $\omega \otimes_S \omega$. We also present an example to show a nonlinearity $G$ that matches the abstract theory. The appendix section contains the proofs of some technical results.

Finally, we want to stress that two different constructions of the key tensor object $\omega \otimes_S \omega$ by using an approximation of the noise path by smooth paths can be found in \cite{GLS14d}. One construction considers as driving noise an infinite-dimensional fractional Brownian motion $B^H$ with $H\in (1/3,1/2]$, while, in a less restrictive setting, the second one considers a Hilbert-valued trace-class Brownian motion $B^{1/2}$.

Furthermore, we refer to \cite{GLS12note} for a short and recent announcement of our results.

\section{Preliminaries}\label{s2}
Let $V=(V,(\cdot,\cdot),|\cdot|)$ be a separable Hilbert--space. On $V$ we define $A$ to be the negative and symmetric generator of an analytic semigroup $S$.
We suppose that $-A$ has a point spectrum $0<\lambda_1\le \lambda_2\le \cdots$ tending to infinite where the associated eigenelements $(e_i)_{i\in\NN}$ form a complete orthonormal system on $V$. $D((-A)^\kappa)=V_\kappa$ denotes the domain of $(-A)^\kappa$ for $\kappa\in\RR$, and as usual, $L(V_\kappa,V_\zeta)$ denotes the space of continuous linear operators from $V_\kappa$ into $V_\zeta$, for $\kappa,\zeta\in\RR$ . We then have the following estimates for the semigroup $S$:
\begin{align}
  &\|S(t)\|_{L(V_{\kappa}, V_{\gamma})}=  \|(-A)^\gamma S(t)\|_{L(V_{\kappa},V)}\le ct^{\kappa-\gamma},\quad\text{for }
  \gamma\ge\kappa,
  \label{eq1} \\
 &\|S(t)-{\rm
id}\|_{L(V_{\sigma},V_{\theta})} \le c
t^{\sigma-\theta}, \quad \text{for } \sigma-\theta\in [0,1]. \label{eq2}
\end{align}
From these properties we can derive easily the following result:

\begin{lemma}\label{l0}
For any  $\nu,\eta,\mu \in [0, 1]$, $\kappa, \gamma, \rho \in \RR$ such that $\kappa \le \gamma{+\mu}$, there exists a constant $c>0$ such that for $0<q<r< s<t$ we have that
\begin{align*}
\|S(&t-r)-S(t-q)\|_{L(V_{\kappa},V_{\gamma})}\le c(r-q)^\mu(t-r)^{-\mu-\gamma+\kappa},
\end{align*}
\begin{align*}
\|S(&t-r)- S(s-r)-S(t-q)+S(s-q)\|_{L(V_{\rho},V_{\rho})}\\ &\leq
c(t-s)^{\eta}(r-q)^{\nu}(s-r)^{-(\nu+\eta)}.
\end{align*}
\end{lemma}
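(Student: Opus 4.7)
The plan is to derive both inequalities directly from the two semigroup bounds (\ref{eq1}) and (\ref{eq2}) by factoring the differences via the semigroup identity $S(a+b)=S(a)S(b)$, and then choosing an intermediate space of fractional order so that one factor absorbs the time regularity and the other factor absorbs the smoothing.

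For the first inequality, since $0<q<r<t$ I would write $S(t-q)=S(r-q)S(t-r)$, so that
\begin{equation*}
S(t-r)-S(t-q)=[\mathrm{id}-S(r-q)]\,S(t-r).
\end{equation*}
Then I would insert the intermediate space $V_{\gamma+\mu}$ (which is legitimate because $\kappa\le\gamma+\mu$ is exactly the hypothesis needed for (\ref{eq1})) and estimate
\begin{equation*}
\|S(t-r)-S(t-q)\|_{L(V_\kappa,V_\gamma)}\le\|\mathrm{id}-S(r-q)\|_{L(V_{\gamma+\mu},V_\gamma)}\,\|S(t-r)\|_{L(V_\kappa,V_{\gamma+\mu})}.
\end{equation*}
The first factor is bounded by $c(r-q)^\mu$ by (\ref{eq2}) with $\sigma-\theta=\mu\in[0,1]$, and the second factor is bounded by $c(t-r)^{\kappa-\gamma-\mu}$ by (\ref{eq1}). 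Multiplying gives the claimed bound.

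For the second inequality, I would use the same semigroup identity to pull out the $(t,s)$-difference on the left:
\begin{equation*}
S(t-r)-S(s-r)-S(t-q)+S(s-q)=[S(t-s)-\mathrm{id}]\,[S(s-r)-S(s-q)].
\end{equation*}
Again inserting an intermediate space $V_{\rho+\eta}$, I would bound the product by
\begin{equation*}
\|S(t-s)-\mathrm{id}\|_{L(V_{\rho+\eta},V_\rho)}\,\|S(s-r)-S(s-q)\|_{L(V_\rho,V_{\rho+\eta})}.
\end{equation*}
The first factor is $\le c(t-s)^\eta$ by (\ref{eq2}) (with $\eta\in[0,1]$), and the second factor is handled by applying the \emph{first} inequality of the lemma with the substitutions $\kappa=\rho$, $\gamma=\rho+\eta$, $\mu=\nu$ (whose compatibility condition $\rho\le\rho+\eta+\nu$ is automatic since $\eta,\nu\ge 0$), yielding $c(r-q)^\nu(s-r)^{-\nu-\eta}$.

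There is no real obstacle here: everything is a bookkeeping exercise provided one picks the intermediate space correctly so that the analyticity estimate (\ref{eq1}) and the increment estimate (\ref{eq2}) can be simultaneously applied. The only point that deserves care is verifying the admissibility conditions $\sigma-\theta\in[0,1]$ in (\ref{eq2}) and $\gamma\ge\kappa$ in (\ref{eq1}) at each application, which is exactly why the hypothesis $\mu\in[0,1]$ and $\kappa\le\gamma+\mu$ is imposed in the first part, and why $\eta,\nu\in[0,1]$ suffices in the second part.
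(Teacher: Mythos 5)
Your proof is correct, and it is exactly the derivation the paper intends (the paper gives no explicit proof, saying only that the lemma follows easily from (\ref{eq1}) and (\ref{eq2})): factor the increments through the semigroup property, insert the intermediate spaces $V_{\gamma+\mu}$ and $V_{\rho+\eta}$, and check the admissibility conditions at each application, all of which you do correctly.
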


Throughout the whole paper we will write very often a constant $c$. This constant can change from line to line. However this constant is always chosen independent of time parameters contained in a fixed interval $[0,T]$.

Let $V\times V$ and $V\otimes V$ be the cartesian product and the tensor product of $V$, see \cite{KadRin97}.  The norm of $V\otimes V$ is denoted by $\|\cdot\|$.
For $x,\,y\in V$ we denote by $x\otimes_V y$ the rank-one tensor of $V\otimes V$. Then $(e_i\otimes_V e_j)_{i,j\in \NN}$ is a complete orthonormal system of $V\otimes V$ where $(e_i)_{i\in \NN}$ can be any complete orthonormal system for $V$, although for the following we choose the orthonormal system given at the beginning of this section.
Let $(\hat V,|\cdot|_{\hat V}, (\cdot,\cdot)_{\hat V})$ be another separable Hilbert--space. By $L_2(V,\hat V)$ ($L_2(V\times V,\hat V)$) we denote the Hilbert-Schmidt operators from $V$ ($V\times V$) to $\hat V$.
In particular  $G\in L_2(V\times V,\hat V)$ if and only if
\begin{equation*}
  \sum_{i,j}|G(e_i,e_j)|_{\hat V}^2<\infty.
\end{equation*}
We note that $G\in L_2(V\times V,\hat V)$ can be {\em extended} to a linear
operator $\hat G$ defined on $V\otimes V$ such that $\hat G\in L_2(V\otimes V,\hat V)$, see \cite{KadRin97} Chapter 2.6.
More precisely,  we can construct a weak Hilbert-Schmidt mapping $p:V\times V\to V\otimes V$ where $p(e_i,e_j)=e_i\otimes_V e_j$ for $i,\,j\in \NN$. Then $\hat G$ on $V\otimes V$ is determined by factorization such that $G=\hat Gp$.
In addition, we have

\begin{equation*}
   \|\hat G\|_{L_2(V\otimes V,\hat V)}^2:= \sum_{i,j}|\hat G(e_i\otimes_V e_j)|_{\hat V}^2=\sum_{i,j}|G(e_i,e_j)|_{\hat V}^2= \|G\|_{L_2(V\times V,\hat V)}^2.
\end{equation*}
In the following we will write for  $\hat G$ also the symbol $G$.\\

Let us now describe the  coefficient of the evolution equation that we have in mind.

\begin{lemma}\label{locall2}
Let $\hat V$ be a subspace of $V$. Assume that the mapping $G: V\to L_2(V,\hat V )$ is three times continuously
Fr\'echet--differentiable with bounded first, second and third
derivatives $DG(u)$, $D^2G(u)$ and $D^3G(u)$, for $u\in V$. Let us denote, respectively, by $c_{DG},\, c_{D^2G}$ and $c_{D^3G}$
the bounds for $DG$, $D^2G$ and $D^3G$, and let $c_G=\|G(0)\|_{L_2(V,\hat V)}$.
Then, for $u_1,\,u_2,\,v_1,\,v_2\in V$, we have
\begin{itemize}
\item $\|G(u_1)\|_{L_2(V,\hat V)}\le c_G+c_{DG}|u_1|$,\\
\item $\|G(u_1)-G(v_1)\|_{L_2(V, \hat V)}\le c_{DG}|u_1-v_1|$,\\
\item  $\|DG(u_1)-DG(v_1)\|_{L_2{(V\times V,\hat V)}}\le c_{D^2G}|u_1-v_1|$,\\
\item  $\|G(u_1)-G(u_2)-DG(u_2)(u_1-u_2)\|_{L_2(V, \hat V)}\le c_{D^2G}|u_1-u_2|^2$,\\
\item  $\|G(u_1)-G(v_1)-(G(u_2)-G(v_2))\|_{L_2(V, \hat V)}\le c_{DG}|u_1-v_1-(u_2-v_2)|\\  \qquad +c_{D^2G} |u_1-u_2|(|u_1-v_1|+|u_2-v_2|)$,\\
 \item $\|DG(u_1)-DG(v_1)-(DG(u_2)-DG(v_2))\|_{L_2(V\times V,\hat V)}\\
 \qquad  \le  c_{D^2G}|u_1-v_1-(u_2-v_2)|+c_{D^3G} |u_1-u_2|(|u_1-v_1|+|u_2-v_2|).$\\
\item   $\|G(u_1)-G(u_2)-DG(u_2)(u_1-u_2)-(G(v_1)-G(v_2)-DG(v_2)(v_1-v_2))\|_{L_2(V, \hat V)}\\
  \qquad \le c_{D^2G}
    (|u_1-u_2|+|v_1-v_2|)|u_1-v_1-(u_2-v_2)|\\
    +c_{D^3G}|v_1-v_2||u_2-v_2|(|u_1-u_2|+|u_1-v_1-(u_2-v_2)|).$
\end{itemize}
\end{lemma}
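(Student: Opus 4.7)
The plan is to obtain every bound by writing the quantity on the left in the integral remainder form of Taylor's theorem (or its iterated version for double differences) and then using the uniform bounds on $DG$, $D^2G$, $D^3G$ together with the triangle inequality in $L_2(V,\hat V)$.

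The first four bullets are essentially routine. For (i), write $G(u_1)=G(0)+\int_0^1 DG(tu_1)u_1\,dt$, take the Hilbert–Schmidt norm, and use $\|DG(\cdot)\|\le c_{DG}$. Bullets (ii) and (iii) follow by applying the same mean-value representation to $G$ (resp.\ $DG$) on the segment from $v_1$ to $u_1$, now using the bound on $DG$ (resp.\ $D^2G$). For (iv), write the second-order remainder in the form
\[
G(u_1)-G(u_2)-DG(u_2)(u_1-u_2) = \int_0^1 \bigl[DG(u_2+t(u_1-u_2))-DG(u_2)\bigr](u_1-u_2)\,dt,
\]
and apply (iii) inside the integrand.

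For (v) and (vi) the key idea is to replace the naive telescoping by a single integral in an auxiliary parameter. Writing
\[
G(u_1)-G(u_2) = \int_0^1 DG(u_2+s(u_1-u_2))(u_1-u_2)\,ds
\]
and the analogous expression for $v_1,v_2$, I subtract the two and add/subtract the intermediate term $DG(v_2+s(v_1-v_2))(u_1-u_2)$. This splits the integrand into a piece $DG(v_2+s(v_1-v_2))\bigl[(u_1-u_2)-(v_1-v_2)\bigr]$, which by the bound on $DG$ gives $c_{DG}|u_1-v_1-(u_2-v_2)|$, and a piece $\bigl[DG(u_2+s(u_1-u_2))-DG(v_2+s(v_1-v_2))\bigr](u_1-u_2)$, which by (iii) is bounded by $c_{D^2G}\bigl(|u_2-v_2|+s|u_1-v_1-(u_2-v_2)|\bigr)|u_1-u_2|$; integrating in $s$ and using $|u_1-v_1-(u_2-v_2)|\le|u_1-v_1|+|u_2-v_2|$ produces the stated form. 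Bullet (vi) is proved identically but one order higher: apply the same add/subtract trick starting from the integral representation of $DG(u_1)-DG(v_1)$ and use (iii) applied to $D^2G$ (itself a consequence of the bound on $D^3G$).

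The genuinely demanding estimate is (vii). Applying the integral remainder twice we must control
\[
\int_0^1\!\!\Bigl\{[DG(u_2+s(u_1-u_2))-DG(u_2)](u_1-u_2)-[DG(v_2+s(v_1-v_2))-DG(v_2)](v_1-v_2)\Bigr\}ds.
\]
I now add and subtract $[DG(v_2+s(v_1-v_2))-DG(v_2)](u_1-u_2)$ inside the braces. This splits the integrand into two pieces. The first, $[DG(v_2+s(v_1-v_2))-DG(v_2)]\bigl[(u_1-u_2)-(v_1-v_2)\bigr]$, is estimated by (iii) to give $c_{D^2G}\,s|v_1-v_2|\cdot|u_1-v_1-(u_2-v_2)|$, which after absorbing $s\le 1$ sits inside the first $c_{D^2G}$-term on the right of the claim. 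The second piece, $\bigl\{[DG(u_2+s(u_1-u_2))-DG(u_2)]-[DG(v_2+s(v_1-v_2))-DG(v_2)]\bigr\}(u_1-u_2)$, is exactly a double difference of $DG$ of the form handled by (vi), with increments $s(u_1-u_2)$ at base $u_2$ and $s(v_1-v_2)$ at base $v_2$; applying (vi) and multiplying by $|u_1-u_2|$ yields precisely the remaining $c_{D^2G}|u_1-u_2|\cdot|u_1-v_1-(u_2-v_2)|$ contribution together with the $c_{D^3G}$-term $|v_1-v_2||u_2-v_2|\bigl(|u_1-u_2|+|u_1-v_1-(u_2-v_2)|\bigr)$ after relabelling the roles of the four points. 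The combinatorial bookkeeping here is the main obstacle: one must match each of the contributions produced by (vi) (there are two $c_{D^2G}$-terms and one $c_{D^3G}$-term inside (vi), each of which picks up an extra factor $|u_1-u_2|$) to the corresponding piece on the right-hand side of (vii), and then integrate over $s\in[0,1]$. Once the matching is done, the triangle inequality closes the estimate.
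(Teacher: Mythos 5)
Your handling of the first six estimates is essentially correct and is exactly the mean--value/Taylor--remainder route the paper itself invokes (the paper gives no details beyond ``these estimates follow by the mean value theorem'' and defers the last one to \cite{NuaRas02}). Two small points there. In the fifth bullet, to get the stated constant you should note that the intermediate point satisfies $u_2-v_2+s\bigl(u_1-v_1-(u_2-v_2)\bigr)=(1-s)(u_2-v_2)+s(u_1-v_1)$, so its norm is at most $(1-s)|u_2-v_2|+s|u_1-v_1|$; the cruder bound $|u_2-v_2|+s|u_1-v_1-(u_2-v_2)|$ followed by the triangle inequality gives $\tfrac32|u_2-v_2|+\tfrac12|u_1-v_1|$, which exceeds $|u_1-v_1|+|u_2-v_2|$ when $|u_2-v_2|$ dominates. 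In the sixth bullet you must keep the same grouping as in the fifth, i.e.\ $[DG(u_1)-DG(u_2)]-[DG(v_1)-DG(v_2)]$; starting ``from the integral representation of $DG(u_1)-DG(v_1)$'', as you write, produces $c_{D^3G}|u_1-v_1|(|u_1-u_2|+|v_1-v_2|)$ rather than the stated $c_{D^3G}|u_1-u_2|(|u_1-v_1|+|u_2-v_2|)$.

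The argument for the last estimate has a genuine gap. Your piece B is $\bigl\{[DG(u_2+s(u_1-u_2))-DG(u_2)]-[DG(v_2+s(v_1-v_2))-DG(v_2)]\bigr\}(u_1-u_2)$, and you bound the operator difference by the sixth bullet. But the $c_{D^3G}$-term of that bullet is proportional to the distance between the two base points, here essentially $|u_2-v_2|$, times $s(|u_1-u_2|+|v_1-v_2|)$; after multiplying by the outer factor $|u_1-u_2|$ this produces a contribution of order $c_{D^3G}\,|u_1-u_2|^2\,|u_2-v_2|$, and no relabelling of the four points removes it. This is not dominated by the stated right-hand side: take $v_1=v_2$, so that $|u_1-v_1-(u_2-v_2)|=|u_1-u_2|$ and the claimed bound collapses to $c_{D^2G}|u_1-u_2|^2$ with no dependence on $|u_2-v_2|$ whatsoever, whereas your estimate retains $c_{D^3G}|u_1-u_2|^2|u_2-v_2|$, which is unbounded as $|u_2-v_2|\to\infty$. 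So the decomposition proves a true but different inequality, not the one asserted.

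The repair is to go one derivative deeper instead of reusing the sixth bullet as a black box. Writing both second-order remainders as iterated integrals of $D^2G$, the quantity to bound becomes $\int_0^1\!\int_0^1 s\,\bigl[D^2G(a_{rs})(x,x)-D^2G(b_{rs})(y,y)\bigr]\,dr\,ds$ with $x=u_1-u_2$, $y=v_1-v_2$, $a_{rs}=u_2+rsx$, $b_{rs}=v_2+rsy$. Splitting the bracket as $D^2G(a)(x-y,x)+D^2G(a)(y,x-y)+[D^2G(a)-D^2G(b)](y,y)$, the first two pieces give the $c_{D^2G}(|u_1-u_2|+|v_1-v_2|)\,|u_1-v_1-(u_2-v_2)|$ term, while the third carries the factor $|y|^2=|v_1-v_2|^2$ (hence vanishes when $v_1=v_2$, as the stated bound requires) together with $|a-b|\le|u_2-v_2|+|u_1-v_1-(u_2-v_2)|$; combining this with the identity $v_1-v_2=(u_1-u_2)-\bigl(u_1-v_1-(u_2-v_2)\bigr)$ and the resulting bound $|v_1-v_2|\le|u_1-u_2|+|u_1-v_1-(u_2-v_2)|$ is the bookkeeping carried out in Lemma~7.1 of \cite{NuaRas02}, which is precisely why the paper cites that reference for this one estimate.
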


These estimates follow by the mean value theorem; for a proof of the last one see \cite{NuaRas02}.

Notice that, in particular, $DG: V\to L_2(V,L_2(V, \hat V))$ (or equivalently,  $DG: V\to L_2(V\times V,\hat V)$) is a bilinear map, that can be extended to  $DG: V\to L_2(V\otimes V, \hat V)$, and $D^2G(u)$ is a trilinear map.\\

 Next we introduce some function spaces. Let $T>0$. For $\beta\in (0,1]$, we consider the Banach--space
of $\beta$--H{\"o}lder--continuous functions on $[0,T]$ with values
in $V$, denoted by $C_{\beta}([0,T];V)$, with
the seminorm

\begin{equation*}
\|u\|_{\beta}=\sup_{0\le t\le T}|u(t)|+|||u|||_\beta,\;|||u|||_\beta=\sup_{0\leq s< t\leq  T}\frac{|u(t)-u(s)|}{(t-s)^\beta}.
\end{equation*}

If $\beta=1$ we call these functions Lipschitz--continuous.
Let $C_{\beta,\sim}([0,T];V)$ be the space of
functions on $[0,T]$ with values in $V$ and with norm

\begin{equation*}
    \|u\|_{\beta,\sim}=\sup_{0\le t\le T}|u(t)|+\sup_{0< s< t\leq  T}s^\beta\frac{|u(t)-u(s)|}{(t-s)^\beta}.
\end{equation*}

\begin{lemma}\label{locall4}
$C_{\beta,\sim}([0,T],V)$ is a Banach--space.
\end{lemma}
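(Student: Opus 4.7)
The plan is the standard two-step argument for a weighted Hölder space: establish that $\|\cdot\|_{\beta,\sim}$ is a bona fide norm, then verify completeness by extracting a pointwise/uniform limit and transferring the Hölder estimate to that limit.

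First I would check that $\|\cdot\|_{\beta,\sim}$ is a norm on the underlying vector space of functions $u:[0,T]\to V$ for which this quantity is finite. Subadditivity and positive homogeneity follow directly from the triangle inequality for $|\cdot|$ applied inside each of the two suprema; and $\|u\|_{\beta,\sim}=0$ forces $\sup_t|u(t)|=0$, hence $u\equiv 0$. So the only real work is completeness.

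Next let $(u_n)_{n\in\NN}\subset C_{\beta,\sim}([0,T];V)$ be Cauchy. Because $\sup_{0\le t\le T}|u_n(t)-u_m(t)|\le \|u_n-u_m\|_{\beta,\sim}$, the sequence is Cauchy in the uniform norm, and since $V$ is complete there exists $u:[0,T]\to V$ with $u_n\to u$ uniformly. In particular the first piece of the norm converges: $\sup_t|u_n(t)-u(t)|\to 0$. To show $u\in C_{\beta,\sim}([0,T];V)$, fix $0<s<t\le T$ and observe that Cauchy sequences are bounded, so $M:=\sup_n\|u_n\|_{\beta,\sim}<\infty$ and
\begin{equation*}
s^\beta\frac{|u_n(t)-u_n(s)|}{(t-s)^\beta}\le M.
\end{equation*}
Sending $n\to\infty$ and using pointwise convergence gives the same bound with $u_n$ replaced by $u$; taking the sup over $0<s<t\le T$ shows the weighted seminorm of $u$ is at most $M$, hence $u\in C_{\beta,\sim}([0,T];V)$.

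Finally I would show $\|u_n-u\|_{\beta,\sim}\to 0$. Given $\varepsilon>0$, pick $N$ so that $\|u_n-u_m\|_{\beta,\sim}<\varepsilon$ for $n,m\ge N$. Then for any $0<s<t\le T$ and $n\ge N$,
\begin{equation*}
s^\beta\frac{|(u_n-u_m)(t)-(u_n-u_m)(s)|}{(t-s)^\beta}<\varepsilon,
\end{equation*}
and letting $m\to\infty$ (using pointwise convergence of $u_m$ to $u$) the same estimate holds with $u_m$ replaced by $u$; combined with the uniform-convergence bound on the sup part, this yields $\|u_n-u\|_{\beta,\sim}\le 2\varepsilon$ for all large $n$. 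The only mildly delicate point is making sure the passage $m\to\infty$ is performed \emph{inside} the supremum (i.e.\ at each fixed pair $(s,t)$ first, then take sup), which is why extracting the bound with $M$ in the previous step is done pointwise as well; no issue arises because the factor $s^\beta(t-s)^{-\beta}$ is a fixed scalar for each $(s,t)$.
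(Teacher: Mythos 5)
Your argument is correct and is the standard completeness proof for weighted H\"older spaces; the paper itself gives no proof of this lemma, deferring to Chen \emph{et al.}, and the argument used there is the same one you give (pass to the uniform limit, transfer the weighted seminorm bound pointwise in $(s,t)$, then let $m\to\infty$ inside the supremum). The only point worth making explicit is that if membership in $C_{\beta,\sim}([0,T];V)$ is understood to include continuity --- in particular at $t=0$, which the weighted seminorm alone does not control --- this is inherited by the limit from the uniform convergence you already established.
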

The proof of this result can be found in Chen {\it et al. }\cite{ChGGSch12}.\\

Let $\Delta_{0,T}$ be the triangle $\{(s,t):0< s\le t\le T\}$. For $ \beta+\beta^\prime<1,\,\beta\le\beta^\prime$ we
introduce the space
$C_{\beta+\beta^\prime,\sim}(\Delta_{0,T},V\otimes V)$   of continuous
functions $v$ defined on $\Delta_{0,T}$, which are zero for $0<s=t$, such that
\begin{equation*}
   \|v\|_{\beta+\beta^\prime,\sim}=\sup_{0< s< t\leq  T} s^{\beta}\frac{\|v(s,t)\|}{(t-s)^{\beta+\beta^\prime}}<\infty.
   \end{equation*}
These functions may not be defined for $s=0$ and can have a singularity for $(s,t),\,s=0$.
\begin{lemma}
The space $C_{\beta+\beta^\prime,\sim}(\Delta_{0,T};V\otimes V)$ is a Banach--space.
\end{lemma}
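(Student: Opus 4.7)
The plan is a standard completeness argument for weighted H\"older-type spaces, split into verifying that $\|\cdot\|_{\beta+\beta',\sim}$ is a norm and then establishing completeness via a Cauchy-sequence construction. Positive homogeneity and the triangle inequality are immediate from the definition; definiteness uses that $\|v\|_{\beta+\beta',\sim}=0$ forces $v(s,t)=0$ for every $(s,t)$ with $0<s<t\le T$, and together with the built-in vanishing condition $v(s,s)=0$ for $s>0$ this gives $v\equiv 0$ on $\Delta_{0,T}$.

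For completeness, let $(v_n)$ be a Cauchy sequence. For any fixed $(s,t)\in\Delta_{0,T}$ with $s<t$ the inequality
$$\|v_n(s,t)-v_m(s,t)\|\le s^{-\beta}(t-s)^{\beta+\beta'}\,\|v_n-v_m\|_{\beta+\beta',\sim}$$
shows that $(v_n(s,t))_n$ is Cauchy in the Hilbert space $V\otimes V$; I would define $v(s,t)$ to be its limit, and set $v(s,s)=0$ for $s>0$. Letting $m\to\infty$ in the Cauchy inequality yields
$$s^{\beta}\,\frac{\|v_n(s,t)-v(s,t)\|}{(t-s)^{\beta+\beta'}}\le\limsup_{m\to\infty}\|v_n-v_m\|_{\beta+\beta',\sim}\,,$$
and because the right-hand side tends to $0$ as $n\to\infty$ uniformly in $(s,t)$, taking the supremum gives $\|v_n-v\|_{\beta+\beta',\sim}\to 0$; in particular $\|v\|_{\beta+\beta',\sim}\le\|v_n\|_{\beta+\beta',\sim}+\|v-v_n\|_{\beta+\beta',\sim}<\infty$, so the candidate limit $v$ satisfies the required growth bound.

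The only step requiring care is showing that $v$ is continuous on $\Delta_{0,T}$. On any compact subset $K\subset\Delta_{0,T}$ which stays bounded away both from the diagonal $\{s=t\}$ and from the edge $\{s=0\}$, the weight $s^{-\beta}(t-s)^{\beta+\beta'}$ is bounded, so the convergence $v_n\to v$ is uniform on $K$ and $v$ inherits continuity from the $v_n$ at every interior point $(s_0,t_0)$ with $t_0>s_0>0$. For a diagonal point $(s_0,s_0)$ with $s_0>0$, continuity reduces to $v(s,t)\to 0=v(s_0,s_0)$ as $(s,t)\to(s_0,s_0)$ within $\Delta_{0,T}$, which follows immediately from $\|v(s,t)\|\le s^{-\beta}(t-s)^{\beta+\beta'}\|v\|_{\beta+\beta',\sim}$ since $s^{-\beta}$ remains bounded near $s_0>0$ while $(t-s)^{\beta+\beta'}\to 0$. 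This diagonal case is the only subtlety; points with $s=0$ lie outside $\Delta_{0,T}$ and so require no separate treatment.
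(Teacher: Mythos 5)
Your proof is correct and is exactly the standard completeness argument for these weighted H\"older-type spaces that the paper itself omits (it merely remarks that the proof is ``similar to the proof of Lemma \ref{locall4}'', which is taken from Chen \emph{et al.}): pointwise limits via the weighted bound, passage to the limit in the Cauchy estimate, and local uniform convergence to recover continuity, with the diagonal handled by the vanishing of the weight there. No gaps; your treatment of the diagonal points and the excluded edge $s=0$ is the right level of care.
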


The proof is similar to the proof of Lemma \ref{locall4} and therefore we omit here.
\medskip

Let us define $\bar \Delta_{0,T}=\{ (s,t) \,:\, 0\le s\le t\le T\}$ and consider the Banach--space $C_{\beta+\beta^\prime}(\bar \Delta_{0,T};V\otimes V)$ of continuous
functions $v$ defined on $\bar \Delta_{0,T}$, which are zero for $s=t$, equipped with the norm
\begin{equation*}
   \|v\|_{\beta+\beta^\prime}=\sup_{0\leq s< t\leq  T} \frac{\|v(s,t)\|}{(t-s)^{\beta+\beta^\prime}}<\infty.
   \end{equation*}
\medskip

We often use the following integral formula: for every $s<t$, $\mu,\,\nu>-1$
\begin{equation}\label{localeq4}
  \int_s^t(r-s)^\mu(t-r)^\nu dr=c(t-s)^{\mu+\nu+1}
\end{equation}
where $c$ only depends on $\mu,\,\nu$. This property follows by the definition of the Beta function simply by performing a suitable change of variable.

\section{Fractional Calculus}\label{s3}

In this paper the main instrument to treat \eqref{localeq01} is fractional calculus. In this section we present the main features of this theory. We are going to assume that
for some $T>0$ we have that $\omega\in C_{\beta^\prime}([0,T];V)$, $u\in C_{\beta,\sim}([0,T];V)$ and $v \in C_{\beta+\beta^\prime,\sim}(\Delta_{0,T};V\otimes V)$ for $1/3<\beta<\beta^\prime<1/2$, and that this triple of elements satisfies the Chen--equality given by
\begin{equation}\label{chen}
   v(s,r)+v(r,t)+(u(r)-u(s))\otimes_V(\omega(t)-\omega(r))=
   v(s,t)
\end{equation}
for $0<s\le r\le t \le T$. We would like to emphasize that when $\omega$ is smooth an example for $v$ is given by
\begin{equation}\label{localeq1}
(u\otimes \omega)(r,t)=  \int_r^t(u(q)-u(r))\otimes_Vd\omega(q).
\end{equation}
This tensor area is clearly well defined and satisfies, for $0<r<t$,
\begin{align}\label{localeq2}
\begin{split}
\|(u\otimes \omega)(r,t)\|&\le \frac{c}{r^\beta}\|u\|_{\beta,\sim}\|\omega\|_{C_1}(t-r)^{1+\beta}\le
  \frac{c}{r^\beta}(t-r)^{\beta+\beta^\prime},
\end{split}
\end{align}
and therefore $(u\otimes \omega) \in C_{\beta+\beta^\prime,\sim}(\Delta_{0,T};V\otimes V)$. Moreover, the Chen--equality easily follows in this case.

\medskip

Let $\alpha\in (0,1)$. We define the right hand side fractional derivative of order $\alpha$ of $u$ and the left hand side fractional derivative of order $1-\alpha$ of $\omega_{t-}(\cdot):=\omega(\cdot)-\omega(t)$, given for $0<s\le r\le t$ by the expressions
 \begin{align*}\label{fractder}
 \begin{split}
    D_{{s}+}^\alpha u[r]=&\frac{1}{\Gamma(1-\alpha)}\bigg(\frac{u(r)}{(r-s)^\alpha}+\alpha\int_{s}^r\frac{u(r)-u(q)}{(r-q)^{1+\alpha}}dq\bigg)\\
    D_{{t}-}^{1-\alpha} \omega_{{t}-}[r]=&\frac{(-1)^{1-\alpha}}{\Gamma(\alpha)}
    \bigg(\frac{\omega(r)-\omega(t)}{(t-r)^{1-\alpha}}+(1-\alpha)\int_r^{t}\frac{\omega(r)-\omega(q)}{(q-r)^{2-\alpha}}dq\bigg),
\end{split}
\end{align*}
where $\Gamma(\cdot)$ denotes the Gamma function.
For tensor valued elements $v$ and for $0<r<t$ we define
\begin{align*}
\mathcal{D} _{{t}-}^{1-\alpha}v[r] =&\frac{(-1)^{1-\alpha}}{%
\Gamma(\alpha)} \bigg( \frac{v(r,t)}{(t-r)^{1-\alpha}}%
+(1-\alpha) \int_r^{t} \frac{v(r,q)}{(q-r)^{2-\alpha}}dq %
\bigg).
\end{align*}

\begin{lemma}\label{locall22}
Suppose that $\beta>\alpha$. Then there exists a constant $c>0$ such that  for $0\le s< r\le t\le T$
\begin{equation*}
  |D_{s+}^\alpha u(\cdot)[r]|\le \frac{c\|u\|_{\beta,\sim}}{(r-s)^\alpha},\quad
  |D_{t-}^{1-\alpha}\omega_{t-}[r]|\le c|||\omega|||_{\beta^\prime}(t-r)^{\alpha+\beta^\prime-1},
\end{equation*}
and for $0<r<q<t$
\begin{align}\label{localeq21}
\begin{split}
  &\|\dD_{t-}^{1-\alpha}v[q]-\dD_{t-}^{1-\alpha}v[r]\|\le  \frac{c}{r^\beta}(\|u\|_{\beta,\sim}|||\omega|||_{\beta^\prime}+\|v\|_{\beta+\beta^\prime,\sim})(q-r)^{\alpha+\beta+\beta^\prime-1}.
\end{split}
\end{align}
\end{lemma}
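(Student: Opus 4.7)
My plan is to prove the three estimates in turn. The first two are direct calculations from the definitions; the third is substantive and hinges on the Chen--equality.

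For $|D_{s+}^\alpha u[r]|$, I would bound the leading $u(r)/(r-s)^\alpha$ term trivially using $|u(r)|\le \|u\|_{\beta,\sim}$, and handle the integral term via $|u(r)-u(q)|\le q^{-\beta}\|u\|_{\beta,\sim}(r-q)^\beta$ (the weighted-H{\"o}lder bound built into $\|\cdot\|_{\beta,\sim}$). The resulting integral $\int_s^r q^{-\beta}(r-q)^{\beta-1-\alpha}dq$ becomes $(r-s)^{-\alpha}B(1-\beta,\beta-\alpha)$ after the substitution $q=s+\tau(r-s)$ and the pointwise bound $s+\tau(r-s)\ge \tau(r-s)$---a finite Beta integral since $\beta<1$ and $\beta>\alpha$. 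The estimate on $|D_{t-}^{1-\alpha}\omega_{t-}[r]|$ is simpler: using $|\omega(a)-\omega(b)|\le |||\omega|||_{\beta^\prime}|a-b|^{\beta^\prime}$ in both terms and invoking \eqref{localeq4} yields the stated bound, since $\alpha+\beta^\prime>0$ makes the integral $\int_r^t(q-r)^{\alpha+\beta^\prime-2}dq$ converge.

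For \eqref{localeq21} my strategy is to split the outer integral as $\int_r^t=\int_r^q+\int_q^t$ and then apply the Chen--equality
$$v(r,p)=v(r,q)+v(q,p)+(u(q)-u(r))\otimes_V(\omega(p)-\omega(q)), \qquad p\ge q,$$
both inside $\int_q^t$ and to the boundary value $v(r,t)$. Using the elementary identity $(1-\alpha)\int_q^t(p-r)^{\alpha-2}dp=(q-r)^{\alpha-1}-(t-r)^{\alpha-1}$, the $(t-r)^{\alpha-1}$ contributions from the rewritten boundary term and from the integrated $v(r,q)$-coefficient cancel exactly. After this reorganization the difference decomposes into six summands that I would estimate individually: a boundary remainder $v(q,t)[(t-q)^{\alpha-1}-(t-r)^{\alpha-1}]$, the clean term $-v(r,q)(q-r)^{\alpha-1}$, a kernel-difference integral $\int_q^t v(q,p)[(p-q)^{\alpha-2}-(p-r)^{\alpha-2}]dp$, two cross terms built from $(u(q)-u(r))\otimes_V(\omega(\cdot)-\omega(q))$, and the short integral $\int_r^q v(r,p)(p-r)^{\alpha-2}dp$.

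Each summand is bounded by the claimed quantity via the weighted seminorms ($\|v(s,t)\|\le s^{-\beta}\|v\|_{\beta+\beta^\prime,\sim}(t-s)^{\beta+\beta^\prime}$ and its analogues) together with \eqref{localeq4}, with $r^{-\beta}$ arising as the common weight because $r$ is the smallest time. The delicate summands are the boundary remainder and the kernel-difference integral: both require a dichotomy according to whether $t-q$ (respectively $p-q$) is comparable to $q-r$, using the mean-value theorem in the ``far'' regime and the direct pointwise bound in the ``near'' regime; this trick relies on $\beta>\alpha$ so that the resulting Beta-type integrals converge. The main obstacle I anticipate is not any individual estimate but the bookkeeping---verifying the exact $(t-r)^{\alpha-1}$ cancellation, handling the case analyses so the target exponent $(q-r)^{\alpha+\beta+\beta^\prime-1}$ is achieved uniformly, and keeping track of all six summands simultaneously.
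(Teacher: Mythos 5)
Your treatment of the two first inequalities is fine, and for \eqref{localeq21} your overall route --- apply the Chen--equality \eqref{chen} to $v(r,t)$ and to $v(r,p)$ for $p\in[q,t]$, exploit the exact cancellation of the $(t-r)^{\alpha-1}$ contributions, and estimate the resulting summands with the weighted H\"older bounds, Beta integrals and the elementary inequality $y^{1-\alpha}-x^{1-\alpha}\le (y-x)^{\beta}x^{1-\alpha-\beta}$ --- is precisely the paper's route: the paper gives no self-contained proof here but defers, via Corollary \ref{coro3}, to the argument written out for $w$ in the proof of Lemma \ref{locall3}, which is organized exactly as you describe (the terms $B_1$, $B_{21}$, $B_{22}$ there are your short integral, Chen-substituted numerator difference, and kernel difference).

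There is, however, a genuine gap in the assertion that \emph{each} of the six summands is bounded by $cr^{-\beta}(q-r)^{\alpha+\beta+\beta^\prime-1}$. This is true for the clean term $v(r,q)(q-r)^{\alpha-1}$, the short integral, and the kernel-difference integral, but it fails for the boundary remainder and for the two cross terms. Concretely, the cross term $(u(q)-u(r))\otimes_V(\omega(t)-\omega(q))\,(t-r)^{\alpha-1}$ admits only the bound $cr^{-\beta}\|u\|_{\beta,\sim}|||\omega|||_{\beta^\prime}(q-r)^{\beta}(t-q)^{\beta^\prime}(t-r)^{\alpha-1}\le cr^{-\beta}(\cdots)(q-r)^{\beta}(t-r)^{\alpha+\beta^\prime-1}$, and since $\alpha+\beta^\prime-1>0$ and $t-r\ge q-r$ the factor $(t-r)^{\alpha+\beta^\prime-1}$ \emph{cannot} be replaced by $(q-r)^{\alpha+\beta^\prime-1}$; the same obstruction occurs for $v(q,t)\bigl[(t-q)^{\alpha-1}-(t-r)^{\alpha-1}\bigr]$ and for $\int_q^t(u(q)-u(r))\otimes_V(\omega(p)-\omega(q))(p-r)^{\alpha-2}dp$. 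These terms sit in the exact identity with coefficient one and do not cancel for generic $(u,v,\omega)$, so no rearrangement recovers the printed exponent. What the decomposition actually yields --- and what the paper's own model computation in Lemma \ref{locall3} produces for $w$, namely a bound of the form $(\theta-r)^{\beta}(t-r)^{2\beta^\prime+\alpha-1}$ --- is the mixed estimate
\begin{equation*}
  \|\dD_{t-}^{1-\alpha}v[q]-\dD_{t-}^{1-\alpha}v[r]\|\le \frac{c}{r^\beta}\bigl(\|u\|_{\beta,\sim}|||\omega|||_{\beta^\prime}+\|v\|_{\beta+\beta^\prime,\sim}\bigr)(q-r)^{\beta}(t-r)^{\alpha+\beta^\prime-1},
\end{equation*}
which is weaker than \eqref{localeq21} as printed but is exactly what is needed to derive \eqref{eq26} (integrate against $(q-r)^{\alpha-2}$ over $(r,t)$). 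You should either prove this mixed form, or explain how to beat it; as written, the step ``each summand is bounded by the claimed quantity'' would fail. A small additional point: the convergence of the various Beta-type integrals here rests on $\alpha+\beta^\prime>1$ and $\alpha+\beta+\beta^\prime<2$ from the standing hypotheses, not on $\beta>\alpha$ (which is what the first estimate of the lemma uses), and the integral $\int_r^t(q-r)^{\alpha+\beta^\prime-2}dq$ in your second estimate requires $\alpha+\beta^\prime>1$, not merely $\alpha+\beta^\prime>0$.
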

The proof of the two first inequalities follows straightforwardly, and the proof of (\ref{localeq21}) is similar to the one of Lemma 6.3 in \cite{HuNu09} with the difference that in that paper the authors work in different function spaces. We refer the reader to Lemma \ref{locall3} and Corollary \ref{coro3} in the Appendix section.

As an extension of the fractional derivative of order $\alpha$, for the mapping $G:V \mapsto L_2(V,\hat V)$ and $s<r $ we introduce the so--called compensated fractional derivative of order $\alpha$ given by
\begin{align*}
  \hat D_{s+}^\alpha G(u(\cdot))[r]=&\frac{1}{\Gamma(1-\alpha)}\bigg(\frac{G(u(r))}{(r-s)^\alpha}\\&+\alpha\int_s^r\frac{G(u(r))-G(u(q))-DG(u(q))(u(r)-u(q))}{(r-q)^{1+\alpha}}dq\bigg).
\end{align*}
It is immediate to prove the following result:
\begin{lemma}\label{locall1}
Suppose that $\alpha<2\beta$ and $G$ satisfies the assumptions of Lemma \ref{locall2}. Then there exists a positive constant $c$ such that for every $0\le s< r \le T$
\begin{equation*}
  |\hat D_{s+}^\alpha G(u(\cdot))[r]|\le \frac{c(1+\|u\|_{\beta,\sim}^2)}{(r-s)^\alpha}.
\end{equation*}
\end{lemma}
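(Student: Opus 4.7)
The plan is to split the definition of $\hat D_{s+}^\alpha G(u(\cdot))[r]$ into its boundary part and its integral part, and bound each separately.

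First, the boundary term $G(u(r))/(r-s)^\alpha$ is handled directly by the first bullet of Lemma \ref{locall2}: since $|u(r)|\le \|u\|_{\beta,\sim}$, we get
\begin{equation*}
\frac{\|G(u(r))\|_{L_2(V,\hat V)}}{(r-s)^\alpha}\le \frac{c_G+c_{DG}\|u\|_{\beta,\sim}}{(r-s)^\alpha}\le \frac{c(1+\|u\|_{\beta,\sim})}{(r-s)^\alpha},
\end{equation*}
which is certainly absorbed into the desired right-hand side.

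For the integral part, the crucial observation is that the numerator is exactly a Taylor-type remainder, so the fourth bullet of Lemma \ref{locall2} gives
\begin{equation*}
\|G(u(r))-G(u(q))-DG(u(q))(u(r)-u(q))\|_{L_2(V,\hat V)}\le c_{D^2G}|u(r)-u(q)|^2.
\end{equation*}
The factor $|u(r)-u(q)|^2$ will be estimated using the definition of the norm $\|u\|_{\beta,\sim}$, which gives $|u(r)-u(q)|\le q^{-\beta}(r-q)^\beta\|u\|_{\beta,\sim}$ for $0<q<r$. Hence the integral term is bounded by
\begin{equation*}
c\,\|u\|_{\beta,\sim}^2\int_s^r q^{-2\beta}(r-q)^{2\beta-1-\alpha}\,dq.
\end{equation*}

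The final step is to show this last integral is dominated by $c(r-s)^{-\alpha}$. Since $q\ge q-s\ge 0$ and $2\beta>0$, we have $q^{-2\beta}\le (q-s)^{-2\beta}$, so that
\begin{equation*}
\int_s^r q^{-2\beta}(r-q)^{2\beta-1-\alpha}\,dq\le \int_s^r(q-s)^{-2\beta}(r-q)^{2\beta-1-\alpha}\,dq = c(r-s)^{-\alpha},
\end{equation*}
by the Beta-integral identity \eqref{localeq4}. The applicability of \eqref{localeq4} requires $-2\beta>-1$ and $2\beta-1-\alpha>-1$, i.e.\ $\beta<1/2$ (already part of our standing assumptions on $\beta$) and $\alpha<2\beta$ (this is the hypothesis of the lemma). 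The case $s=0$ is handled by the same computation directly, since the integrand is integrable near $q=0$ precisely because $2\beta<1$.

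Combining the two bounds and using $1+\|u\|_{\beta,\sim}\le 1+\|u\|_{\beta,\sim}^2+\|u\|_{\beta,\sim}\le c(1+\|u\|_{\beta,\sim}^2)$ yields the claim. There is no real obstacle here; the only point requiring a little care is the swap $q^{-2\beta}\le (q-s)^{-2\beta}$, which is what makes the bound independent of the singularity of $\|u\|_{\beta,\sim}$ at $0$ and ultimately produces the clean factor $(r-s)^{-\alpha}$.
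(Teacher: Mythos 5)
Your proof is correct and is exactly the argument the paper has in mind: the paper omits the proof as ``immediate,'' but the identical decomposition and estimates (first and fourth bullets of Lemma \ref{locall2}, the bound $|u(r)-u(q)|\le q^{-\beta}(r-q)^{\beta}\|u\|_{\beta,\sim}$, and the Beta integral \eqref{localeq4} under $\alpha<2\beta$, $\beta<1/2$) appear verbatim in the treatment of the term $C_{11}$ in the proof of Lemma \ref{locall19b}. Nothing is missing.
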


Let us assume for a while that $V,\,\hat V=\RR$. We first recall the following useful property which is an integration by parts formula
\begin{align}\label{byparts}
(-1)^\alpha \int_s^t  D_{s+}^\alpha u[r] \omega( r) dr= \int_s^t u(r) D_{t-}^{\alpha}\omega[r]dr,
\end{align}
see Z{\"a}hle \cite{Zah98}, formula (21). For $\beta>\alpha$ and $\alpha+\beta^\prime >1$ the fractional integral is given by\begin{equation*}
  \int_s^tud\omega:= (-1)^\alpha\int_s^t D_{s+}^\alpha u[r]D_{t-}^{1-\alpha}\omega_{t-}[r]dr,
\end{equation*}
see again \cite{Zah98}, which is a version of the Young integral. By Lemma \ref{locall22} and the property \eqref{localeq4}, for the above integral it is easy to derive that
\begin{equation*}\label{localeq5}
  \bigg|\int_s^tud\omega\bigg|\le c\|u\|_{\beta,\sim}|||\omega|||_{\beta^\prime}(t-s)^{\beta^\prime}.
\end{equation*}
For H{\"o}lder--continuous $u$ and $\omega$ this kind of integral was defined by Young \cite{You36}.
However, our function $u$ is not H{\"o}lder--continuous in the strong sense but  $u\in C_{\beta,\sim}([0,T];\RR)$, in which case that integral is also well defined in the above sense since, according to \cite{Zah98}, what we need is that $u\in I_{s+}^\alpha (L^ p((s,t);\mathbb R))$, $u(s+)$ bounded and $\omega_{t-} \in
I_{t-}^{1-\alpha} (L^{q}((s,t); \mathbb R))$, with $\alpha p<1$, $p^{-1}+q^{-1}\le 1$ (for the definition of these spaces we refer to Samko {\it et al.} \cite{Samko}). In particular, under our conditions on $\alpha,\,\beta$ and $\beta^\prime$, we know that $\omega_{t-}\in I_{t-}^{1-\alpha} (L^{q}((s,t); \mathbb R))$ for any $q>1$ and
$u\in I_{s+}^\alpha (L^ p((s,t);\mathbb R))$ when $\alpha p<1$, see Theorem 13.2 of \cite{Samko}.

\medskip

Next we introduce integrals of fractional type with values in a separable Hilbert--space. To do that, we need a new separable Hilbert--space $(\tilde V,|\cdot|_{\tilde V}, (\cdot,\cdot)_{\tilde V})$.

\begin{lemma}\label{locall21}
Assume $\beta>\alpha$ and $\alpha+\beta^\prime >1$. Let $\hat V,\,\tilde V$ be two separable Hilbert--spaces, being $(\tilde e_i)_{i\in\NN}$ and $(f_j)_{j\in\NN}$ complete orthonormal basis of $\tilde V$ and $\hat V$ resp., and let
\begin{equation*}
  [s,t]\ni r\mapsto F(r)\in L_2(\tilde V,\hat V),\quad  [s,t]\ni r\mapsto \xi(r)\in \tilde V
\end{equation*}
be measurable functions such that $F\in C_{\beta,\sim}([0,T]; L_2(\tilde V,\hat V))$, $\xi \in C_{\beta^\prime} ([0,T]; \tilde V)$ and $r\mapsto\|D_{s+}^\alpha F[r]\|_{L_2(\tilde V,\hat V)}|D_{t-}^{1-\alpha}\xi[r]|_{\tilde V}$ is Lebesgue-integrable. Then for $0\le s\le r\le t \le T$ we can define
\begin{equation*}
  \int_s^t F(r)d\xi(r):=(-1)^\alpha\sum_{j}\bigg(\sum_i\int_s^t D_{s+}^\alpha(f_j,F(\cdot)\tilde e_i)_{\hat V}[r]D_{t-}^{1-\alpha}(\tilde e_i,\xi(\cdot))_{\tilde V}[r]dr\bigg)f_j.
\end{equation*}
\end{lemma}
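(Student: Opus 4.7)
The goal is to check that the nested series on the right is well-defined: the inner sum over $i$ must converge absolutely to a scalar for each $j$, and the resulting sequence of $f_j$-coefficients must be square-summable, so that the whole expression lies in $\hat V$. My plan rests on the fact that the scalar fractional derivative commutes with the bounded linear pairings $u \mapsto (\tilde e_i, u)_{\tilde V}$ and $w\mapsto (f_j, w)_{\hat V}$, which allows us to transfer everything to the two key Hilbert-space identities
\begin{align*}
\sum_i \bigl| D_{s+}^{\alpha}(f_j, F(\cdot)\tilde e_i)_{\hat V}[r]\bigr|^{2}
   &= \bigl\| D_{s+}^{\alpha}F^{*}(\cdot) f_j [r]\bigr\|_{\tilde V}^{2}, \\
\sum_j \bigl\| D_{s+}^{\alpha}F^{*}(\cdot) f_j [r]\bigr\|_{\tilde V}^{2}
   &= \bigl\| D_{s+}^{\alpha}F(\cdot)[r]\bigr\|_{L_2(\tilde V,\hat V)}^{2},
\end{align*}
and analogously $\sum_i \bigl| D_{t-}^{1-\alpha}(\tilde e_i,\xi)_{\tilde V}[r]\bigr|^2=|D_{t-}^{1-\alpha}\xi[r]|_{\tilde V}^{2}$.

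First I would recall the scalar theory: since $\beta>\alpha$ and $\alpha+\beta'>1$, the scalar functions $(f_j, F(\cdot)\tilde e_i)_{\hat V}$ lie in $C_{\beta,\sim}$ and $(\tilde e_i,\xi(\cdot))_{\tilde V}$ lie in $C_{\beta'}$, so the scalar Young/Z\"ahle integral $\int_s^t D_{s+}^{\alpha}(f_j, F(\cdot)\tilde e_i)_{\hat V}[r]\,D_{t-}^{1-\alpha}(\tilde e_i,\xi)_{\tilde V}[r]\,dr$ is well defined by the integration-by-parts formula \eqref{byparts}. Next, for a fixed $j$, I apply Cauchy--Schwarz in $i$ to the integrand, which together with the identities above yields the pointwise bound
\begin{equation*}
\Bigl|\sum_i D_{s+}^{\alpha}(f_j,F(\cdot)\tilde e_i)_{\hat V}[r]\,D_{t-}^{1-\alpha}(\tilde e_i,\xi)_{\tilde V}[r]\Bigr|
\le \bigl\|D_{s+}^{\alpha}F^{*}(\cdot)f_j[r]\bigr\|_{\tilde V}\,\bigl|D_{t-}^{1-\alpha}\xi[r]\bigr|_{\tilde V},
\end{equation*}
showing that the inner $i$-sum converges absolutely and defines a Lebesgue-integrable scalar $c_j(s,t)$ for each $j$.

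Then, to show the outer sum $\sum_j c_j(s,t) f_j$ is convergent in $\hat V$, I would invoke Minkowski's integral inequality in the form $(\sum_j|\int g_j(r)\,dr|^2)^{1/2}\le \int (\sum_j|g_j(r)|^2)^{1/2}\,dr$. Combined with Cauchy--Schwarz in $i$ and the Hilbert--Schmidt identity displayed above, this yields
\begin{equation*}
\Bigl(\sum_j |c_j(s,t)|^{2}\Bigr)^{1/2}\le \int_s^t \bigl\|D_{s+}^{\alpha}F[r]\bigr\|_{L_2(\tilde V,\hat V)}\,\bigl|D_{t-}^{1-\alpha}\xi[r]\bigr|_{\tilde V}\,dr,
\end{equation*}
which is finite by the integrability assumption of the lemma. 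This proves convergence in $\hat V$.

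Finally I would observe that the resulting vector is basis-independent: the same Minkowski/Cauchy--Schwarz reasoning shows that the partial sums over any finite set of indices $(i,j)$ form a Cauchy net in $\hat V$, and the limit can be identified intrinsically as the integral against the vector-valued measure built from $D_{s+}^{\alpha}F[r]$ and $D_{t-}^{1-\alpha}\xi[r]$. The main technical point, and the one I would be most careful with, is the commutation of $D_{s+}^{\alpha}$ and $D_{t-}^{1-\alpha}$ with the scalar pairings and the attendant Hilbert--Schmidt computation; once that is in place the convergence of the double series reduces cleanly to Minkowski plus the hypothesis of Lebesgue-integrability.
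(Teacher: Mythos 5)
Your argument is correct and follows essentially the same route as the paper: the paper's justification is precisely the chain of inequalities in \eqref{split}, namely Cauchy--Schwarz in $i$, the Minkowski integral inequality to pull the $\ell^2$-sum over $j$ inside the integral, and the Hilbert--Schmidt identities to arrive at the bound $\int_s^t \|D_{s+}^{\alpha}F[r]\|_{L_2(\tilde V,\hat V)}\,|D_{t-}^{1-\alpha}\xi[r]|_{\tilde V}\,dr$, finite by hypothesis. The only cosmetic difference is that the paper additionally verifies that this integrand is indeed integrable under the stated regularity of $F$ and $\xi$ via \eqref{localeq4}, whereas you simply invoke the integrability assumption, which the lemma explicitly grants.
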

That this expression is well defined  follows by
\begin{align}
   \bigg|\int_s^t & F(r)d\xi(r)\bigg|_{\hat V} \nonumber\\
   = &\bigg(\sum_j \bigg( \sum_i\int_s^t D_{s+}^\alpha(f_j,F(\cdot) \tilde e_i)_{\hat V}[r]D_{t-}^{1-\alpha}(\tilde e_i,\xi(\cdot))_{\tilde V}[r]dr\bigg)^2\bigg)^\frac12 \nonumber \\
   \le & \bigg(\sum_j \bigg( \int_s^t \bigg(\sum_i(D_{s+}^\alpha(f_j,F(\cdot)\tilde e_i)_{\hat V}[r])^2\sum_i(D_{t-}^{1-\alpha}(\tilde e_i,\xi(\cdot))_{\tilde V}[r])^2\bigg)^\frac12 dr\bigg)^2\bigg)^\frac12 \nonumber \\
   \le & \int_s^t \bigg(\sum_{j,i}(D_{s+}^\alpha(f_j,F(\cdot)\tilde e_i)_{\hat V}[r])^2\sum_i(D_{t-}^{1-\alpha}(\tilde e_i,\xi(\cdot))_{\tilde V}[r])^2\bigg)^\frac12 dr  \nonumber \\
    \le &  \int_s^t \bigg(\sum_{j,i}(D_{s+}^\alpha(f_j,F(\cdot)\tilde e_i)_{\hat V}[r])^2\bigg)^\frac12\bigg(\sum_i(D_{t-}^{1-\alpha}(\tilde e_i,\xi(\cdot))_{\tilde V}[r])^2\bigg)^\frac12 dr \nonumber \\
   = & \int_s^t \|D_{s+}^\alpha F[r]\|_{L_2(\tilde V,\hat V)}|D_{t-}^{1-\alpha}\xi[r]|_{\tilde V}dr<\infty. \label{split}
\end{align}
Observe that this last integral is finite since
\begin{align*}
  & \|D_{s+}^{\alpha}F[r]\|_{L_2(\tilde V,\hat V)}=\bigg( \sum_{j,i}( D_{s+}^\alpha (f_j,F(\cdot)\tilde e_i)_{\hat V}[r])^2\bigg)^\frac12 \\
   =& \bigg(\sum_{j,i}\bigg(\frac{1}{\Gamma(1-\alpha)}\bigg(\frac{(f_j,F(r)\tilde e_i)_{\hat V}}{(r-s)^\alpha}
   +\alpha\int_{s}^r\frac{(f_j,F(r)\tilde e_i)_{\hat V}-(f_j,F(q)\tilde e_i)_{\hat V}}{(r-q)^{1+\alpha}}dq\bigg)\bigg)^2\bigg)^\frac12\\
   \le & \sqrt{2} c\bigg(\frac{(\sum_{j,i}(f_j,F(r)\tilde e_i)_{\hat V}^2)^\frac12}{(r-s)^\alpha}
   +\bigg(\sum_{j,i}\bigg(\int_{s}^r\frac{(f_j,F(r)\tilde e_i)_{\hat V}-(f_j,F(q)\tilde e_i)_{\hat V}}{(r-q)^{1+\alpha}}dq\bigg)^2\bigg)^\frac12\bigg)\\
   \le & \sqrt{2}c\bigg(\frac{\|F(r)\|_{L_2(\tilde V,\hat V)}}{(r-s)^\alpha}+\int_{s}^r\frac{\|F(r)-F(q)\|_{L_2(\tilde V,\hat V)}}{(r-q)^{1+\alpha}}dq\bigg)\\
   \le&q c (r-s)^{-\alpha} \|F\|_{C_{\beta,\sim}([0,T]; L_2(\tilde V,\hat V))},
\end{align*}
and $|D_{t-}^{1-\alpha}\xi[r]|_{\tilde V} \leq c |||\xi|||_{\beta^\prime}(t-r)^{\alpha+\beta^\prime-1}$ (see Lemma \ref{locall22}), so it suffices to apply (\ref{localeq4}).\\

Now we can apply Lemma \ref{locall21} to define integrals of fractional type with values in a separable Hilbert--space, as well as to consider integrators with values in $V$ or $V\otimes V$. For example, consider an integrand of the type $G(u(r))$ where $u\in C_{\beta,\sim}([0,T];V)$, and $\beta<\alpha<2\beta$, $\alpha+\beta^\prime>1,\, \beta+1>2\alpha$. Note that if $DG$ is bounded then $G(u)\in C_{\beta,\sim}([0,T],V)$, but since we do not assume that $\beta >\alpha$, then $D_{s+}^\alpha G(u(\cdot))$ is not well-defined. However, we can apply Lemma \ref{locall21} in the following way
\begin{align}\label{localeq_3}
\begin{split}
&\int_{s}^{t} G(u(\cdot))d\omega = (-1)^\alpha\int_{s}^{t}D^\alpha_{s+}DG(u(\cdot))(u(\cdot)-u(s),\cdot)[r]D_{t-}^{1-\alpha}\omega_{t-}[r]dr \\ +&(-1)^\alpha\int_{s}^{t} D_{s+}^\alpha (G(u(\cdot))-DG(u(\cdot))(u(\cdot)-u(s),\cdot))[r]D_{t-}^{1-\alpha}\omega_{t-}[r]dr.
\end{split}
\end{align}
Since $u,\omega,v$ are coupled by the Chen--equality \eqref{chen}, we get
\begin{align}\label{localas}
\begin{split}
&D_{t-}^{1-\alpha} v(s,\cdot)_{t-}[r]=\frac{(-1)^{1-\alpha}}{\Gamma(\alpha)}
    \bigg(\frac{v(s,r)-v(s,t)}{(t-r)^\alpha}+(1-\alpha)\int_r^{t}\frac{v(s,r)-v(s,q)}{(q-r)^{2-\alpha}}dq\bigg)\\
    &=\frac{(-1)^{1-\alpha}}{\Gamma(\alpha)}
    \bigg(\frac{-v(r,t)-(u(r)-u(s))\otimes_V (\omega(t)-\omega(r))}{(t-r)^\alpha}\\
    &\quad +(1-\alpha)\int_r^{t}\frac{-v(r,q)-(u(r)-u(s))\otimes_V (\omega(q)-\omega(r))}{(q-r)^{2-\alpha}}dq\bigg)\\
   & =-\dD_{t-}^{1-\alpha} v[r]+(u(r)-u(s))\otimes_V D_{t-}^{1-\alpha}\omega_{t-}[r].
   \end{split}
\end{align}

Similarly, it is easy to derive that
\begin{align*}
D^\alpha_{s+} &(G(u(\cdot))-DG(u(\cdot))(u(\cdot)-u(s),\cdot))[r]\\
=& \hat
D_{s+}^\alpha G(u(\cdot))[r]-D^\alpha_{s+} DG(u(\cdot))[r]
(u(r)-u(s),\cdot),
\end{align*}
and thus, coming back to \eqref{localeq_3} we obtain that

\begin{align}\label{localeq6}
\begin{split}
\int_s^tG(u(\cdot))d\omega=&(-1)^\alpha\int_s^t\hat D_{s+}^\alpha G(u(\cdot))[r]D_{t-}^{1-\alpha}\omega_{t-}[r]dr\\&-(-1)^{\alpha}\int_s^tD_{s+}^{\alpha} DG(u(\cdot))[r]
  \dD_{t-}^{1-\alpha}v(\cdot,t)[r]dr\\
  =&(-1)^\alpha\int_s^t\hat D_{s+}^\alpha G(u(\cdot))[r]D_{t-}^{1-\alpha}\omega_{t-}[r]dr\\&-(-1)^{2\alpha-1}\int_s^tD_{s+}^{2\alpha-1} DG(u(\cdot))[r]
  D_{t-}^{1-\alpha}\dD_{t-}^{1-\alpha}v(\cdot,t)[r]dr,
\end{split}
\end{align}
where the last equality is true thanks to the fractional integration by parts formula (\ref{byparts}).
The last integral on the right hand side of (\ref{localeq6}) is well-defined due to, for $0\le s \le r\le t\le T$,
\begin{equation}\label{eq26}
   \|D_{t-}^{1-\alpha}\mathcal{D} _{{t}-}^{1-\alpha}v[r]\|\le c(\|v\|_{\beta+\beta^\prime,\sim}+\|u\|_{\beta,\sim}|||\omega|||_{\beta^\prime})(r-s)^{-\beta}(t-r)^{\beta+\beta^\prime+2\alpha-2}
\end{equation}
(see Lemma \ref{locall3} above) and hence this integral can be defined by using Lemma \ref{locall21}. \\

\begin{remark}\label{localremark}
It may be checked that the integral given in Lemma \ref{locall21} can be also defined for an integrator like $F(r)=S(t-r)G(u(r))$, which is not H{\"o}lder--continuous at zero. The reason is because the $f_j$-modes of this expression are H\"older continuous. Furthermore, the integrability condition (\ref{split}) is satisfied by the regularity properties of the analytic semigroup $S$ and the regularity of $G$ and $u$.

\end{remark}

Finally, on account of (\ref{localeq6}) and the estimates of the different fractional derivatives that take part in that expression, we can establish the following result:
\begin{lemma}\label{locall8}
Suppose $u,\,v,\,\omega$ satisfy the assumptions from the beginning of this section and $G$ satisfies the assumptions of Lemma \ref{locall2}. Let $\beta<\alpha<2\beta$, $\alpha+\beta^\prime>1,\, \beta+1>2\alpha$. Then for $0\le s\le t\le T$ we have
\begin{equation*}\label{localeq_1}
  \bigg|\int_s^t G(u)d\omega\bigg|\le c((1+\|u\|_{\beta,\sim}^2)|||\omega|||_{\beta^\prime}+\|u\|_{\beta,\sim}\|v\|_{\beta+\beta^\prime,\sim})(t-s)^{\beta^\prime}.
\end{equation*}
\end{lemma}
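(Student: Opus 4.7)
The plan is to start from the decomposition (\ref{localeq6}), which represents $\int_s^t G(u)d\omega$ as the sum of
\[
I_1 = (-1)^\alpha\int_s^t\hat D_{s+}^\alpha G(u(\cdot))[r]\,D_{t-}^{1-\alpha}\omega_{t-}[r]\,dr
\]
and
\[
I_2 = -(-1)^{2\alpha-1}\int_s^t D_{s+}^{2\alpha-1} DG(u(\cdot))[r]\,D_{t-}^{1-\alpha}\dD_{t-}^{1-\alpha}v(\cdot,t)[r]\,dr,
\]
and then to estimate each piece separately by bounding the pointwise size of the two factors under the integral sign and integrating the resulting product of powers via \eqref{localeq4}. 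The hypotheses $\beta<\alpha<2\beta$, $\alpha+\beta'>1$, $\beta+1>2\alpha$ are tailored so that all three singularity exponents that appear will lie strictly above $-1$, and I will verify this at each step.

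For $I_1$, Lemma \ref{locall1} (whose hypothesis $\alpha<2\beta$ is exactly ours) gives
$|\hat D_{s+}^\alpha G(u(\cdot))[r]|\le c(1+\|u\|_{\beta,\sim}^2)(r-s)^{-\alpha}$, and the second estimate in Lemma \ref{locall22} gives $|D_{t-}^{1-\alpha}\omega_{t-}[r]|\le c|||\omega|||_{\beta'}(t-r)^{\alpha+\beta'-1}$. Multiplying these bounds and integrating via \eqref{localeq4} yields $|I_1|\le c(1+\|u\|_{\beta,\sim}^2)|||\omega|||_{\beta'}(t-s)^{\beta'}$; the exponents $-\alpha>-1$ and $\alpha+\beta'-1>-1$ are both satisfied because $\alpha<1$ and $\alpha+\beta'>1>0$.

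For $I_2$, since $DG$ and $D^2G$ are bounded I observe that $DG(u(\cdot))\in C_{\beta,\sim}([0,T];L_2(V\times V,\hat V))$ with $\|DG(u)\|_{\beta,\sim}\le c(1+\|u\|_{\beta,\sim})$, so applying the first estimate of Lemma \ref{locall22} with exponent $2\alpha-1$ (which is legitimate because $\beta+1>2\alpha$ gives $\beta>2\alpha-1$) produces $\|D_{s+}^{2\alpha-1}DG(u(\cdot))[r]\|\le c(1+\|u\|_{\beta,\sim})(r-s)^{-(2\alpha-1)}$. Combined with \eqref{eq26}, the integrand in $I_2$ is dominated by
\[
c\,(1+\|u\|_{\beta,\sim})(\|v\|_{\beta+\beta',\sim}+\|u\|_{\beta,\sim}|||\omega|||_{\beta'})\,(r-s)^{-(2\alpha-1)-\beta}(t-r)^{\beta+\beta'+2\alpha-2}.
\]
The exponents sum to $\beta'-1$, so \eqref{localeq4} gives the desired $(t-s)^{\beta'}$ after integration, provided $-(2\alpha-1)-\beta>-1$ (equivalent to $2\alpha-1+\beta<1$, which follows from $2\alpha-1<\beta<\beta'$ together with the standing assumption $\beta+\beta'<1$) and $\beta+\beta'+2\alpha-2>-1$ (equivalent to $2\alpha+\beta+\beta'>1$, which follows from $\alpha+\beta'>1$).

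Adding the two estimates and absorbing $(1+\|u\|_{\beta,\sim})(\|v\|_{\beta+\beta',\sim}+\|u\|_{\beta,\sim}|||\omega|||_{\beta'})$ into the form stated in the lemma (the cross term $\|v\|_{\beta+\beta',\sim}$ may be lumped with $\|u\|_{\beta,\sim}\|v\|_{\beta+\beta',\sim}$ up to an additive $(1+\|u\|^2_{\beta,\sim})|||\omega|||_{\beta'}$ contribution, and the remaining $\|u\|_{\beta,\sim}^2|||\omega|||_{\beta'}$ piece fits inside the first summand) yields the claimed inequality. The main technical obstacle I anticipate is purely bookkeeping: ensuring that all three exponent conditions for the integrability of $(r-s)^\ast(t-r)^\ast$ are simultaneously compatible with the four inequalities on $(\alpha,\beta,\beta')$, and verifying that the weighted nature of the $\|\cdot\|_{\beta,\sim}$ and $\|\cdot\|_{\beta+\beta',\sim}$ norms is correctly propagated through each use of Lemmas \ref{locall22} and \ref{locall1} (which is why the bound \eqref{eq26} carries the $(r-s)^{-\beta}$ prefactor and must be used rather than a naive Hölder estimate).
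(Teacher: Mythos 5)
Your proposal is correct and is essentially the paper's own argument: the paper offers no further proof of this lemma beyond invoking the decomposition \eqref{localeq6} together with Lemma \ref{locall1}, Lemma \ref{locall22}, the bound \eqref{eq26} and the Beta--function identity \eqref{localeq4}, and your exponent checks ($-\alpha>-1$, $-(2\alpha-1)-\beta>-1$, $\beta+\beta^\prime+2\alpha-2>-1$, with the exponents summing to $\beta^\prime$) all go through under the stated conditions on $\alpha,\beta,\beta^\prime$. The one step you should not paper over is the final absorption: the product bound for the second integral is of the form $(c_{DG}+c\,\|u\|_{\beta,\sim})(\|v\|_{\beta+\beta^\prime,\sim}+\|u\|_{\beta,\sim}|||\omega|||_{\beta^\prime})$, which contains a standalone term $c_{DG}\|v\|_{\beta+\beta^\prime,\sim}$ that is genuinely not dominated by $(1+\|u\|_{\beta,\sim}^2)|||\omega|||_{\beta^\prime}+\|u\|_{\beta,\sim}\|v\|_{\beta+\beta^\prime,\sim}$ (take $\|u\|_{\beta,\sim}$ and $|||\omega|||_{\beta^\prime}$ small while $\|v\|_{\beta+\beta^\prime,\sim}$ is large), so your proposed lumping is not valid as stated. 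This is an imprecision in the lemma's displayed right-hand side rather than a flaw in the method: the argument naturally yields $c\big((1+\|u\|_{\beta,\sim}^2)|||\omega|||_{\beta^\prime}+(1+\|u\|_{\beta,\sim})\|v\|_{\beta+\beta^\prime,\sim}\big)(t-s)^{\beta^\prime}$, and wherever the paper uses such estimates it immediately relaxes them to the form $(1+|||\omega|||_{\beta^\prime})(1+\|U\|_W^2)$ (cf.\ Lemma \ref{locall19b}), which does dominate the extra term.
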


\section{Path-Area-solutions of stochastic evolution equations}\label{s4}

In this section we give the definition of a mild solution to \eqref{localeq01} and establish a result about the {\it local} existence and uniqueness of such solution. In order to understand the notion of mild solution to \eqref{localeq01}, in a first step we consider the case in which the driving noise is regular, to later on consider the H\"older case in which we are interested in.

Note that we could also add a nonlinear diffusion term $F$ on the right-hand side of the equation in (\ref{localeq00}). Nevertheless, to simplify the whole presentation we have not considered it since the $dt$-nonlinearity is not the interesting problem to be treated in the paper.

\subsection{System \eqref{localeq01} driven by smooth paths $\omega$} In this situation, since $A$ has the properties of Section \ref{s2} and $DG$ is bounded, we remind that for any $u_0\in V$ there exists a unique solution $u \in C([0,T];V)$ of \eqref{localeq01} for any $T>0$, see \cite{Pazy}. In addition, applying the properties \eqref{eq1}, \eqref{eq2} we obtain that
this solution is in $C_{\gamma,\sim}([0,T];V)$ for any $\gamma \in (0,1)$. However, we do not obtain that the solution is in $C_{\gamma}([0,T];V)$, due to the fact that $t\mapsto S(t)u_0$ is not H{\"o}lder-continuous in general but in $C_{\gamma,\sim}([0,T];V)$.

Since we want to find the appropriate definition of mild solution to \eqref{localeq01} when the driving noise is {\it only} H\"older continuous, in what follows we will use the fractional integration techniques of the previous section to rewrite \eqref{localeq01} in a way that allows us to shed light on that issue.

Assuming that $G$ satisfies the conditions of Lemma \ref{locall2},
similarly to the expression (\ref{localeq6}) of last section,
we can rewrite \eqref{localeq01} as
    \begin{align}\label{sol}
\begin{split}
    u(t)&=S(t)u_0+(-1)^\alpha\int_0^t\hat D_{0+}^\alpha
    (S(t-\cdot)G(u(\cdot)))[r]D_{t-}^{1-\alpha}\omega_{t-}[r]dr\\
&-(-1)^{2\alpha-1}\int_0^t
    {D}_{0+}^{2\alpha-1}(S(t-\cdot)DG(u(\cdot)))[r]D_{t-}^{1-\alpha}\dD_{t-}^{1-\alpha}(u\otimes\omega)[r]dr,
 \end{split}
\end{align}
where the tensor $(u\otimes \omega)$ is given by (\ref{localeq1}). Looking at (\ref{sol}) we realize that we also need an equation to determine the corresponding counterpart of $(u\otimes\omega)$. In order to get such an equation, let us introduce the mapping $(\omega\otimes_S\omega)(s,t)$ given by
\begin{align}\begin{split}
  L_2(V,\hat V)\ni E\mapsto E(\omega\otimes_S\omega)(s,t)&=\int_s^t\int_s^\xi
    S(\xi-r)E\omega^\prime(r)dr \otimes_V \omega^\prime(\xi)d\xi\\
   &=\int_s^t\int_r^t
   S(\xi-r)E\omega^\prime(r)d\xi \otimes_V \omega^\prime(\xi)dr
   \end{split}\label{omegaSS}
\end{align}
Under weak conditions $(\omega\otimes_S\omega)$ is at least in $C_{2\beta^\prime}(\bar \Delta_{0,T},L_2(L_2(V,\hat V),V\otimes V))$, for which it suffices to execute the corresponding integrations. In addition, we set
\begin{align}\label{localeq19}
\begin{split}
&e\in V\mapsto\omega_S(s,t)e:=(-1)^{-\alpha}\int_s^t(S(\xi-s)e)\otimes_V\omega^\prime(\xi)d\xi,\\
&E\in L_2(V,\hat V) \mapsto S_\omega(s,t)E:=  \int_s^t S(t-r)E\omega^\prime(r)dr.
\end{split}
\end{align}

It is an easy exercise to check that $(\omega\otimes_S\omega)$  satisfies the Chen--equality
\begin{align*}
\begin{split}
&E(\omega\otimes_S \omega)(s,r) +E(\omega\otimes_S
\omega)(r,t) +(-1)^{-\alpha} \omega_S(r,t)S_\omega(s,r)E\\&\quad =E(\omega\otimes_S \omega)(s,t),
\end{split}
\end{align*}
which follows by joining the integrals given in \eqref{omegaSS} over $\bar \Delta_{s,r}$ and $\bar \Delta_{r,t}$ and the double integral over the rectangle $[s,r]\times[r,t]$ given by the composition of the expressions in \eqref{localeq19}.

Since $\omega$ is regular, the tensor $(u\otimes \omega)$ given by (\ref{localeq1}) can be expressed as
\begin{align}\label{ns}
\begin{split}
    (u\otimes\omega)(s,t)=&\int_s^t(S(\xi-s)-{\rm id})u(s)\otimes_V\omega^\prime(\xi)d\xi\\
    &+\int_s^t\int_s^\xi S(\xi-r)G(u(r))\omega^\prime(r)dr\otimes_V\omega^\prime(\xi)d\xi,
    \end{split}
\end{align}
and, exchanging the order of integration, the last integral of \eqref{ns} can be written as
\begin{equation}\label{eq18}
  \int_s^t G(u(r))D_1(\omega\otimes_S\omega)(r,t)dr.
\end{equation}
Interpreting \eqref{eq18} in the sense of \eqref{localeq6} and coming back to (\ref{ns}), it gives us
\begin{align}\label{eq3}
\begin{split}
    (u\otimes\omega)&(s,t)
    =(-1)^\alpha\int_s^tD_{s+}^\alpha((S(\cdot-s)-{\rm id})u(s))[\xi]\otimes_VD_{t-}^{1-\alpha}\omega[\xi]d\xi\\
    &-(-1)^\alpha\int_s^t \hat D_{s+}^\alpha
    G(u(\cdot))[r] D_{t-}^{1-\alpha}(\omega\otimes_S\omega)(\cdot,t)_{t-}[r]dr\\
&+(-1)^{2\alpha-1}\int_s^t D_{s+}^{2\alpha-1} DG(u(\cdot))[r] D_{t-}^{1-\alpha}\dD_{t-}^{1-\alpha}(u\otimes(\omega\otimes_S\omega)(t))[r]dr,
\end{split}
\end{align}
where the element $w=(u\otimes(\omega\otimes_S\omega))$ is defined by
\begin{align}\label{eq20}
\begin{split}
  \tilde Ew(t,s,q):=&\int_s^q
   \tilde E (u(r)-u(s),\cdot)D_1(\omega\otimes_S\omega)(r,t)  dr\\=&-\int_s^q\int_r^tS(\xi-r)\tilde E (u(r)-u(s),\omega^\prime (r))\otimes_V\omega^\prime(\xi)d\xi dr\\
   =&-(-1)^{-\alpha}\int_s^q\omega_S(r,t)\tilde E (u(r)-u(s),\omega^\prime (r)) dr
\end{split}
\end{align}
for $\tilde E\in L_2(V\otimes V,\hat V)$. Therefore, to evaluate the last integral of \eqref{eq3} we have to give a meaning to \eqref{eq20}.

\begin{lemma}\label{uomom}
Let  $\tilde E\in L_2(V\otimes V,\hat V)$. Then for $0 < s \leq q\leq  t\leq T$
\begin{align*}
\begin{split}
\tilde Ew(t,s,q)=&-\int_s^q\hat D_{s+}^{\alpha}\omega_S(\cdot,t)\tilde E(u(\cdot)-u(s),\cdot)[r]  D_{q-}^{1-\alpha}\omega_{q-}[r]dr\\
&+(-1)^{\alpha-1}\int_s^q D_{s+}^{2\alpha-1}\tilde E(u(\cdot)-u(s),\cdot)[r]  D_{q-}^{1-\alpha}\dD_{q-}^{1-\alpha}(\omega_S(t)\otimes\omega)[r]
    dr\\&+(-1)^{\alpha-1}\int_s^q D_{s+}^{2\alpha-1}\omega_S(\cdot,t)[r] \tilde E D_{q-}^{1-\alpha}\dD_{q-}^{1-\alpha} (u\otimes \omega)(\cdot,q)[r] dr,
\end{split}
\end{align*}
where, for  $s\le\tau\le t$ and $E\in L_2(V,\hat V)$, $(\omega_S(t)\otimes\omega)$ satisfies
\begin{align*}
\begin{split}
E(\omega_S(t)&\otimes\omega)(s,\tau)=\int_s^\tau(\omega_S(r,t)-\omega_S(s,t))Ed\omega(r)\\
    =&\int_s^\tau(\omega_S(r,t)-\omega_S(r,\tau))Ed\omega(r)+\int_s^\tau\omega_S(r,\tau)Ed\omega(r)
    -\int_s^\tau\omega_S(s,t)Ed\omega(r)\\
    =&   \omega_S(\tau,t)\int_s^\tau(S(\tau-r)-{\rm id})Ed\omega(r)+(-1)^{-\alpha}E(\omega\otimes_S\omega)(s,\tau)\\
    &+(\omega_S(\tau,t)-\omega_S(s,t))E(\omega(\tau)-\omega(s)).
\end{split}
\end{align*}
\end{lemma}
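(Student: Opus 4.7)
The plan is to mirror the two-step derivation of \eqref{localeq6}: first rewrite the defining integral as a Young-type fractional integral, then split off a compensated piece and convert the remaining corrections to area integrals via a further fractional integration by parts. The added difficulty compared with \eqref{localeq6} is that the integrand now contains \emph{two} factors depending on $r$, namely $\omega_S(r,t)$ and $u(r)-u(s)$, so the compensation must subtract two distinct first-order terms rather than one.

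Concretely, in the smooth-noise setting the defining identity reads
\begin{equation*}
  \tilde E\,w(t,s,q) = -(-1)^{-\alpha}\int_s^q \Lambda(r)\,d\omega(r),\qquad \Lambda(r):=\omega_S(r,t)\,\tilde E(u(r)-u(s),\cdot),
\end{equation*}
and the fractional integration by parts formula \eqref{byparts} gives $\tilde E\,w(t,s,q)=-\int_s^q D_{s+}^{\alpha}\Lambda[r]\,D_{q-}^{1-\alpha}\omega_{q-}[r]\,dr$. Using $\Lambda(s)=0$, I introduce a compensated derivative $\hat D_{s+}^{\alpha}\Lambda[r]$ by subtracting, inside the increment $\Lambda(r)-\Lambda(p)$, the two first-order corrections $\omega_S(p,t)\tilde E(u(r)-u(p),\cdot)$ and $(\omega_S(r,t)-\omega_S(p,t))\tilde E(u(p)-u(s),\cdot)$. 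A direct algebraic expansion shows that the remainder equals $(\omega_S(r,t)-\omega_S(p,t))\tilde E(u(r)-u(p),\cdot)$, which is quadratic in the small differences and therefore renders $\hat D_{s+}^{\alpha}\Lambda[r]$ integrable. The first term in the statement of the lemma is then exactly $-\int_s^q \hat D_{s+}^{\alpha}\Lambda[r]\,D_{q-}^{1-\alpha}\omega_{q-}[r]\,dr$.

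The two first-order corrections, appearing in $D_{s+}^\alpha\Lambda-\hat D_{s+}^\alpha\Lambda$ by linearity of $D_{s+}^\alpha$, are then converted to area integrals. After Fubini in the double $(r,p)$-integral, the correction that carries the factor $(u(r)-u(p))\otimes_V D_{q-}^{1-\alpha}\omega_{q-}[r]$ is rewritten via the identity \eqref{localas} (applied with base point $p$ and horizon $q$) in terms of $\dD_{q-}^{1-\alpha}(u\otimes\omega)[r]$; a second fractional IBP, analogous to the transition between the two equalities of \eqref{localeq6}, raises the derivative on $\omega_S(\cdot,t)$ from order $\alpha$ to $2\alpha-1$ and absorbs the extra $D_{q-}^{1-\alpha}$ into $D_{q-}^{1-\alpha}\dD_{q-}^{1-\alpha}(u\otimes\omega)(\cdot,q)[r]$, producing Term 3. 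The other correction is treated analogously, using a Chen-type relation for the new area $(\omega_S(t)\otimes\omega)$, and yields Term 2; the required bound on $D_{q-}^{1-\alpha}\dD_{q-}^{1-\alpha}(\omega_S(t)\otimes\omega)[r]$ follows by the same scheme as \eqref{eq26} once the auxiliary identity is in hand.

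For the auxiliary identity, I start from the definition $E(\omega_S(t)\otimes\omega)(s,\tau)=\int_s^\tau(\omega_S(r,t)-\omega_S(s,t))E\,d\omega(r)$ and add and subtract $\omega_S(r,\tau)E$ inside the integrand, splitting it into three pieces. The semigroup identity together with \eqref{localeq19} gives $\omega_S(r,t)-\omega_S(r,\tau)=\omega_S(\tau,t)(S(\tau-r)-\mathrm{id})$, allowing $\omega_S(\tau,t)$ to be pulled out of the first piece. The middle piece $\int_s^\tau \omega_S(r,\tau)E\,d\omega(r)$ is identified with $(-1)^{-\alpha}E(\omega\otimes_S\omega)(s,\tau)$ by direct inspection of \eqref{omegaSS}, and the last piece reduces to $(\omega_S(\tau,t)-\omega_S(s,t))E(\omega(\tau)-\omega(s))$ after factoring out the constant $\omega_S(s,t)E$. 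Summing gives the claimed expression. The main obstacle in the whole argument is precisely the trilinear nature of the integrand: two independent directions of compensation must be handled at once, each producing a distinct area object, and the genuinely new area $(\omega_S(t)\otimes\omega)$ has to be decoded by the auxiliary identity before $\dD_{q-}^{1-\alpha}(\omega_S(t)\otimes\omega)$ can be controlled; throughout, the joint constraints $\beta<\alpha<2\beta$, $\alpha+\beta'>1$, $\beta+1>2\alpha$ must be kept in play via the bounds of Lemmas \ref{locall22}, \ref{locall1}, and the Beta-type identity \eqref{localeq4}.
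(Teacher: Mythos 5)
Your strategy is essentially the paper's: the paper packages the compensation step as an application of Hu--Nualart's Theorem 3.3 to the bilinear map $f_{\tilde E}(Q,u)=Q\tilde E(u,\cdot)$ evaluated along $(\omega_S(\cdot,t),u(\cdot)-u(s))$, which produces exactly your four-term splitting (compensated derivative plus the two first-order corrections plus a derivative integral), and then converts the corrections into the two area terms by a second fractional integration by parts combined with Chen-type identities of the form \eqref{localas} for $(u\otimes\omega)$ and $(\omega_S(t)\otimes\omega)$ --- precisely the route you describe. Re-deriving the compensation by hand from the bilinear increment identity, with remainder $(\omega_S(r,t)-\omega_S(p,t))\tilde E(u(r)-u(p),\cdot)$, is a cosmetic rather than substantive difference, and your quadratic remainder is the correct one.

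One concrete slip in your derivation of the auxiliary identity: the relation you invoke, $\omega_S(r,t)-\omega_S(r,\tau)=\omega_S(\tau,t)(S(\tau-r)-{\rm id})$, is false; the semigroup property applied to \eqref{localeq19} gives $\omega_S(r,t)-\omega_S(r,\tau)=\omega_S(\tau,t)S(\tau-r)$, without the $-{\rm id}$. Likewise, the third piece $-\int_s^\tau\omega_S(s,t)E\,d\omega(r)$ equals $-\omega_S(s,t)E(\omega(\tau)-\omega(s))$, not $(\omega_S(\tau,t)-\omega_S(s,t))E(\omega(\tau)-\omega(s))$ as you claim. The two errors differ by the same quantity $\omega_S(\tau,t)E(\omega(\tau)-\omega(s))$ and cancel in the sum, so you land on the correct final formula, but each intermediate line as written is wrong: the term $-\omega_S(\tau,t)E(\omega(\tau)-\omega(s))$ hidden in $\omega_S(\tau,t)\int_s^\tau(S(\tau-r)-{\rm id})E\,d\omega(r)$ must be compensated by regrouping it with the third piece, which is how $(\omega_S(\tau,t)-\omega_S(s,t))$ arises there. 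This is a bookkeeping fix, not a conceptual gap.
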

The proof of this result is in the Appendix section. Note that joining the integrals on the right hand side of \eqref{eq20} with respect their domain of integration we obtain
\begin{align}\label{localeq9}
\begin{split}
\tilde E &w(t,s,r)+\tilde E w(t,r,q)-\tilde E(u(r)-u(s),\cdot)(\omega\otimes_S\omega)(r,q) \\
    &=\tilde E w(t,s,q)+\omega_S(q,t)S_\omega(q,r)\tilde E(u(r)-u(s),\cdot),
    \end{split}
\end{align}
that for $q=t$ becomes
\begin{align}\label{localeq10}
\begin{split}
\tilde Ew(t,s,r)+\tilde Ew(t,r,t)
    -\tilde E(u(r)-u(s),\cdot)(\omega\otimes_S\omega)(r,t)
    =\tilde Ew(t,s,t)
    \end{split}
\end{align}
since $\omega_S(t,t)=0$, and this last formula corresponds to the Chen--equality for $w$. \\

We have finally arrived at the following definition:
\begin{definition}\label{defi}
Under the described conditions on $A$ and $G$, for any $u_0\in V$, a mild solution of (\ref{localeq01}) is given by the pair $(u,u\otimes \omega)$ satisfying, respectively, the equations (\ref{sol}) and (\ref{eq3}).
\end{definition}
We stress once more that in order to reach the previous definition we have considered a regular driving path $\omega$, although in that situation the solution is simply given by the path component, as stated at the beginning of this subsection. Definition \ref{defi} has to be seen as the guide to establish the definition of a mild solution in the general case of dealing with driving functions that are {\it only} H\"older continuous.

\subsection{System \eqref{localeq01} driven by $\omega\in C_{\beta^\prime}([0,T];V)$}

We start by standing several hypothesis denoted by {\bf (H)}:

\smallskip

{\bf (H1)} Let $H\in (1/3,1/2]$ and let  $1/3<\beta\le \beta^\prime <H$. Suppose that there is an $\alpha$ such that $1-\beta< \alpha <2 \beta$,
$\alpha< \frac{\beta+1}{2}$.

\smallskip

{\bf (H2)} Let $A$ be the generator of an analytic semigroup $S$ given at the beginning of Section \ref{s2} and let $G: V\to L_2(V,\hat V)$
be a non--linear mapping satisfying the assumptions of Lemma \ref{locall2}, with $\hat V\subset V$ such that the embedding operator is Hilbert-Schmidt with norm $c_{V,\hat V}$. In particular, we could choose $\hat V=V_\kappa$ assuming that $c_{V,\hat V}^2=\sum_i\lambda_i^{-2\kappa}<\infty$.

\smallskip

{\bf (H3)}
Let $(\omega^n)_{n\in\mathbb{N}}$ be a sequence  of piecewise linear functions with values in $V$ such that
$((\omega^n\otimes_S\omega^n))_{n\in\mathbb{N}}$ is defined by
\eqref{omegaSS}. Assume then that for any $\beta^\prime<H$ the sequence
$((\omega^n,(\omega^n\otimes_S\omega^n)))_{n\in\mathbb{N}}$ converges to
$(\omega,(\omega\otimes_S\omega))$ in
$C_{\beta^\prime}([0,T];V)\times C_{2\beta^\prime} (\bar \Delta_{0,T}; L_2(L_2(V,\hat V),V\otimes V))$ on a set of full measure. \\

We would like to mention that in Garrido-Atienza {\it et al.} \cite{ GLS14d} we propose two different settings where assumption {\bf (H3)} is satisfied. In particular, in that paper we focus on the case of considering as driving noise a fractional Brownian--motion $B^H$ with $H\in (1/3,1/2]$ and values in a Hilbert--space, and, by another less restrictive method, on the infinite-dimensional trace-class Brownian--motion $B^{1/2}$. Both constructions are based on some results by Deya {\it et al.} \cite{DeNeTi10} .\\

Take a fixed $\omega\in C_{\beta^\prime}([0,T],V)$ and consider $\gamma$ such that $\alpha<\gamma<1$.
We denote by $(W_{0,T},\|\cdot\|_W)$ the subspace of elements $U=(u,v)$ of the Banach--space $C_{\beta,\sim}([0,T];V)\times C_{\beta+\beta^\prime,\sim}(\ \Delta_{0,T};V\otimes V)$ such that the Chen--equality holds. Let us also consider a subset $\hat W_{0,T}$ of this space given by the limit points in this space of the set
\begin{align}\label{localeq12}
\begin{split}
  \{(u^{n},(u^{n}\otimes \omega^{n}))&:n\in\NN,u^{n}\in  C_{\gamma}([0,T];V), \,u^n(0)\in D((-A)),\\& (\omega^n,\omega^n\otimes_S\omega^n)\,\text{satisfies }{\bf (H3)}\}.
\end{split}
\end{align}
Here $(u^n\otimes\omega^n)$ is well defined by the integral \eqref{localeq1}. Note that this set of limit points is a  subspace of
$C_{\beta,\sim}([0,T];V)\times C_{\beta+\beta^\prime,\sim}(\Delta_{0,T};V\otimes V)$ which is closed. Hence $\hat W_{0,T}$ itself is a complete metric space depending on $\omega$ with a metric generated by the norm of $C_{\beta,\sim}([0,T];V)\times C_{\beta+\beta^\prime,\sim}(\Delta_{0,T};V\otimes V)$:
\begin{equation*}
 d_{\hat W}(U^1,U^2)  :=\|U^1-U^2\|_W=\|u^1-u^2\|_{\beta,\sim}+\|v^1-v^2\|_{\beta+\beta^\prime,\sim}.
\end{equation*}
In addition, elements $U\in\hat W_{0,T}$ satisfy the Chen--equality
\eqref{chen} with respect to $\omega$. Furthermore, for the elements $u^n,(u^n\otimes\omega^n)$ we can reformulate the right hand side of \eqref{sol} as in \eqref{localeq01}. Indeed, all expressions that appear in this procedure (even those which cancel) are well defined.

Furthermore, the additivity of integrals follows, and in particular:

\begin{lemma}\label{localln1}
Let $(u,v)\in \hat W_{0,T},\,0\le s\le t\le T$ then
\begin{equation*}
  S(t-s)\int_0^sS(s-r)G(u(r))d\omega(r)+\int_s^tS(t-r)G(u(r))d\omega(r)=\int_0^t S(t-r)G(u(r))d\omega(r)
\end{equation*}
where these integrals are defined in the sense of \eqref{sol}.
\end{lemma}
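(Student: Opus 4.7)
The strategy is to verify the identity first on the dense subset of smooth approximations used to define $\hat W_{0,T}$, and then transfer it to a general $(u,v)$ by continuity. By the very definition of $\hat W_{0,T}$ in \eqref{localeq12}, I would choose a sequence $(u^n, u^n\otimes\omega^n)$ with $u^n \in C_\gamma([0,T];V)$, $u^n(0)\in D(A)$ and piecewise-linear paths $\omega^n$ satisfying \textbf{(H3)}, converging to $(u,v)$ in the norm of $\hat W_{0,T}$, and such that $(\omega^n, \omega^n\otimes_S\omega^n) \to (\omega, \omega\otimes_S\omega)$ in $C_{\beta^\prime}([0,T];V)\times C_{2\beta^\prime}(\bar\Delta_{0,T}; L_2(L_2(V,\hat V), V\otimes V))$. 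For each such $n$, since $\omega^n$ is piecewise $C^1$, the computations leading to \eqref{localeq6} show that the fractional representation in \eqref{sol} of $\int_a^b S(b-r)G(u^n(r))d\omega^n$ coincides with the ordinary Bochner integral $\int_a^b S(b-r)G(u^n(r))(\omega^n)^\prime(r)\,dr$. For such classical integrals the asserted additivity is a direct consequence of the semigroup property $S(t-s)S(s-r)=S(t-r)$ and the commutativity of the bounded operator $S(t-s)$ with the Bochner integral.

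Next, I would prove that each of the three integrals in the statement, defined via \eqref{sol}, depends Lipschitz continuously on the data $((u,v),(\omega,\omega\otimes_S\omega))$ in the topologies above. Term by term this repeats the argument giving Lemma~\ref{locall8}: Lemma~\ref{locall1} controls the compensated derivative $\hat D^\alpha_{a+}(S(b-\cdot)G(u(\cdot)))[r]$; Lemma~\ref{locall22} controls $D^{1-\alpha}_{b-}\omega_{b-}[r]$; the inequality \eqref{eq26} controls $D^{1-\alpha}_{b-}\dD^{1-\alpha}_{b-} v(\cdot,b)[r]$; the semigroup bounds \eqref{eq1}--\eqref{eq2} together with Lemma~\ref{l0} handle the singular factor $S(b-\cdot)$, exactly in the spirit of Remark~\ref{localremark}; and the quantitative estimates of Lemma~\ref{locall2} furnish the Lipschitz dependence of $G$ and $DG$ on the path input. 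Combining these gives the stability of each of the three integrals in the data.

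Once this continuity is in place, applying the bounded operator $S(t-s)$ to the additivity identity for $(u^n,\omega^n)$ and letting $n\to\infty$ yields the claim. The main obstacle is the stability estimate for the third summand in \eqref{sol}, where $u$, the area $v$ and $(\omega\otimes_S\omega)$ enter nonlinearly through $DG$; I would split the corresponding difference using the bounds on $DG$ and $D^2G$ from Lemma~\ref{locall2} while monitoring the exponents produced by the fractional derivatives. Condition \textbf{(H1)}, namely $1-\beta<\alpha<2\beta$ and $\alpha<(\beta+1)/2$, is precisely what is needed to keep every $r$-integral arising in these estimates convergent, so the Lipschitz dependence and hence the limit transfer go through.
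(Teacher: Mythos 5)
Your proposal is correct and follows essentially the same route as the paper: approximate $(u,v)$ by the smooth sequence $(u^n,u^n\otimes\omega^n)$ built into the definition of $\hat W_{0,T}$, verify additivity for smooth data (you do this by reducing to Bochner integrals and the semigroup property, where the paper invokes Z\"ahle's additivity of the fractional integral --- a minor difference in justification, not in strategy), and then pass to the limit in the representation \eqref{sol} using the stability of each term in the data. The continuity step you outline is exactly what the paper relies on implicitly when it says ``consider their $V$--limit for $n\to\infty$,'' and is substantiated elsewhere in the paper by the linear/bilinear dependence of each term on $\omega$ and $(\omega\otimes_S\omega)$ together with the Lipschitz estimates of Lemmas \ref{locall23} and \ref{locall20}.
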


\begin{proof}

Replacing  $(u,v)$ by an approximating sequence $(u^n,(u^n\otimes \omega^n))$ then the above formula holds which follows because
for $F(\cdot)=G(u^n(\cdot))$ the integral can be defined in the sense of Lemma \ref{locall21} and $(u^n\otimes \omega^n)$ can be defined by \eqref{localeq1}. Then the additivity of that integral follows by the method in Z{\"a}hle \cite{Zah98}. Now it suffices to rewrite these integrals according to \eqref{sol} and consider their $V$--limit for $n\to\infty$.
\end{proof}

\begin{remark}Let us stress that for smooth elements $\omega$ both integrals of (\ref{localeq6}) are additive for elements $(u,v)\in C_{\beta,\sim}([0,T];V)\times C_{\beta+\beta^\prime, \sim}(\Delta_{0,T},V\otimes V)$ satisfying (\ref{chen}). But for the sake of conciseness we did not to include the proof in the paper.

We also would like to emphasize that some of the results that we are going to establish below still remain true when considering instead $\hat W_{0,T}$ the bigger space $W_{0,T}$. Nevertheless, for uniqueness of presentation we always keep the space $\hat W_{0,T}$.
\end{remark}

\smallskip

In order to study the existence of solutions to (\ref{localeq01}) we consider the operator
\begin{equation}\label{T}
  \tT(U,\omega,(\omega\otimes_S\omega),u_0)=(\tT_1(U,\omega,u_0),\tT_2(U,\omega,(\omega\otimes_S\omega),u_0))
\end{equation}
defined for $U=(u,v)\in {\hat W_{0,T}}$ by the expressions
\begin{align*}
  \tT_1(U,\omega,u_0)(t)
  &:=
S(t)u_0+(-1)^\alpha\int_0^t\hat D_{0+}^\alpha (S(t-\cdot)G(u(\cdot)))[r]D_{t-}^{1-\alpha}\omega[r]dr\\
  &-(-1)^{2\alpha-1}
  \int_0^tD_{0+}^{2\alpha-1}(S(t-\cdot)DG(u(\cdot)))[r]D_{t-}^{1-\alpha}\dD_{t-}^{1-\alpha}v[r]dr,\\
  \tT_2(U,\omega,(\omega\otimes_S\omega),u_0)(s,t) &:=(-1)^\alpha\int_s^tD_{s+}^\alpha((S(\cdot-s)-{\rm id})u(s))[\xi]\otimes_VD_{t-}^{1-\alpha}\omega[\xi]d\xi\\
    &-(-1)^\alpha\int_s^t \hat D_{s+}^\alpha
    G(u(\cdot))[r] D_{t-}^{1-\alpha}(\omega\otimes_S\omega)(\cdot,t)_{t-}[r]dr\\
&+(-1)^{2\alpha-1}\int_s^t D_{s+}^{2\alpha-1} DG(u(\cdot))[r] D_{t-}^{1-\alpha}\dD_{t-}^{1-\alpha}w(t,\cdot,\cdot)[r]dr,
\end{align*}
where $w$ is given, for $0 < s \leq q\leq  t\leq T$ and $\tilde E\in L_2(V\otimes V,\hat V)$, by
\begin{align}\label{wholder}
\begin{split}
&\tilde Ew(t,s,q)=-\int_s^q\hat D_{s+}^{\alpha}\omega_S(\cdot,t)\tilde E(u(\cdot)-u(s),\cdot)[r]  D_{q-}^{1-\alpha}\omega_{q-}[r]dr\\
&+(-1)^{\alpha-1}\int_s^q D_{s+}^{2\alpha-1}\tilde E(u(\cdot)-u(s),\cdot)[r]  D_{q-}^{1-\alpha}\dD_{q-}^{1-\alpha}(\omega_S(t)\otimes\omega)[r]
    dr\\&+(-1)^{\alpha-1}\int_s^q D_{s+}^{2\alpha-1}\omega_S(\cdot,t)[r] \tilde E D_{q-}^{1-\alpha}\dD_{q-}^{1-\alpha}v(\cdot,q)[r] dr,
\end{split}
\end{align}
being $(\omega_S(t)\otimes\omega)$ defined, for $s\le\tau\le t$ and $E\in L_2(V,\hat V)$, by
\begin{align}\label{oS}
\begin{split}
& E(\omega_S(t)\otimes\omega)(s,\tau)=\omega_S(\tau,t)\int_s^\tau(S(\tau-r)-{\rm id})Ed\omega(r)\\
    +&   (-1)^{-\alpha}E(\omega\otimes_S\omega)(s,\tau)+(\omega_S(\tau,t)-\omega_S(s,t))E(\omega(\tau)-\omega(s)),
\end{split}
\end{align}
where the last integral is defined by fractional derivatives due to the regularity of the semigroup $S$.

Having in mind the Definition \ref{defi}, the corresponding definition of a mild solution of (\ref{localeq01}) is given as follows:
\begin{definition}
$U\in {\hat W_{0,T}}$ such that $U=\tT(U,\omega,(\omega\otimes_S\omega),u_0)$ is called a mild path--area solution to \eqref{localeq00}.
\end{definition}

In what follows we want to study the existence of a fixed point for the operator $\tT$.

\begin{lemma}\label{locall19b}
Assume that $u_0\in V$. There exists a $c>0$ independent of $s<t,\,u_0$ and $\omega$ such that for $U\in {\hat W_{0,T}}$
\begin{align*}
\begin{split}
  \|\tT_1(U,\omega,u_0)\|_{\beta,\sim}\le & c(|u_0|+T^{\beta^\prime}((1+\|u\|_{\beta,\sim}^2)|||\omega|||_{\beta^\prime}+\|u\|_{\beta,\sim}\|v\|_{\beta+\beta^\prime,\sim}))\\\le & c(|u_0|+T^{\beta^\prime}(1+|||\omega|||_{\beta^\prime}){(1+\|U\|_W^2))}.
\end{split}
\end{align*}
\end{lemma}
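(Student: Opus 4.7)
The plan is to separately bound the two ingredients of the norm $\|\mathcal{T}_1(U,\omega,u_0)\|_{\beta,\sim}$, namely the uniform norm $\sup_{0\le t\le T}|\mathcal{T}_1(t)|$ and the weighted Hölder seminorm $\sup_{0<s<t\le T}s^\beta|\mathcal{T}_1(t)-\mathcal{T}_1(s)|/(t-s)^\beta$, and then derive the compact form involving $\|U\|_W$ by Young's inequality.

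For the uniform bound, $|S(t)u_0|\le c|u_0|$ follows from \eqref{eq1}, and the integral part is a semigroup-weighted variant of Lemma \ref{locall8}. The estimates of Lemmas \ref{locall1}, \ref{locall22} together with \eqref{eq26} extend directly to the integrands $S(t-\cdot)G(u(\cdot))$ and $S(t-\cdot)DG(u(\cdot))$ because $\|S(t-r)\|_{L(V)}$ is uniformly bounded on $[0,T]$ (cf.\ Remark \ref{localremark}); combined with the beta-function identity \eqref{localeq4}, they yield
\[
\bigl|\mathcal{T}_1(U,\omega,u_0)(t)-S(t)u_0\bigr|\le cT^{\beta'}\bigl((1+\|u\|_{\beta,\sim}^2)|||\omega|||_{\beta'}+\|u\|_{\beta,\sim}\|v\|_{\beta+\beta',\sim}\bigr).
\]

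For the weighted Hölder seminorm, fix $0<s<t\le T$ and, using the additivity property from Lemma \ref{localln1}, I decompose
\[
\mathcal{T}_1(t)-\mathcal{T}_1(s)=(S(t-s)-{\rm id})S(s)u_0+(S(t-s)-{\rm id})I(s)+J(s,t),
\]
where $I(s):=\int_0^s S(s-r)G(u(r))d\omega(r)$ and $J(s,t):=\int_s^t S(t-r)G(u(r))d\omega(r)$ are both defined through their fractional-derivative representation. The first summand is controlled by $c(t-s)^\beta s^{-\beta}|u_0|$ via $\|S(t-s)-{\rm id}\|_{L(V_\beta,V)}\le c(t-s)^\beta$ and $\|S(s)\|_{L(V,V_\beta)}\le cs^{-\beta}$, so its weighted contribution is bounded by $c|u_0|$. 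For the second summand, \eqref{eq2} reduces matters to a $V_\beta$-estimate $|I(s)|_{V_\beta}\le cs^{\beta'-\beta}\bigl((1+\|u\|_{\beta,\sim}^2)|||\omega|||_{\beta'}+\|u\|_{\beta,\sim}\|v\|_{\beta+\beta',\sim}\bigr)$, which is obtained by re-running the uniform-bound computation with the extra singularity $\|(-A)^\beta S(s-r)\|_{L(V)}\le c(s-r)^{-\beta}$ supplied by \eqref{eq1}; multiplying by $s^\beta$ then gives $cs^{\beta'}(\cdots)\le cT^{\beta'}(\cdots)$. The third summand is estimated on $[s,t]$ by the same semigroup-weighted version of Lemma \ref{locall8}, giving $|J(s,t)|\le c(t-s)^{\beta'}(\cdots)$, and its weighted contribution is $s^\beta(t-s)^{\beta'-\beta}(\cdots)\le T^{\beta'}(\cdots)$ because $\beta'\ge\beta$ and $s,t-s\le T$.

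The technical crux is the $V_\beta$-estimate of $I(s)$: the new singular factor $(s-r)^{-\beta}$ from $(-A)^\beta S(s-r)$ must interact integrably with $r^{-\alpha}$, $r^{-(2\alpha-1)}$, $(s-r)^{\alpha+\beta'-1}$ and $r^{-\beta}(s-r)^{\beta+\beta'+2\alpha-2}$ that appear in the integrands through Lemmas \ref{locall22}, \ref{locall1} and \eqref{eq26}, and must produce precisely the factor $s^{\beta'-\beta}$ in place of the $s^{\beta'}$ obtained in the uniform bound. Integrability and the correct scaling are guaranteed by the parameter constraints of \textbf{(H1)}: $\beta<\alpha<2\beta$, $\alpha+\beta'>1$ and $2\alpha<\beta+1$. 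Finally, the second inequality in the statement follows from the first by Young's inequality $\|u\|_{\beta,\sim}\|v\|_{\beta+\beta',\sim}\le\tfrac12\|U\|_W^2$ together with $1+\|u\|_{\beta,\sim}^2\le 1+\|U\|_W^2$.
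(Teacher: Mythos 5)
Your proposal is correct and follows the same overall skeleton as the paper's proof: the identity $\tT_1(t)-\tT_1(s)=(S(t-s)-{\rm id})S(s)u_0+\int_0^s(S(t-r)-S(s-r))G(u(r))d\omega(r)+\int_s^tS(t-r)G(u(r))d\omega(r)$ via Lemma \ref{localln1}, then term-by-term estimation. The one genuine divergence is your treatment of the middle term over $[0,s]$. The paper keeps the difference $S(t-r)-S(s-r)$ inside the integral and estimates the fractional derivatives of $(S(t-\cdot)-S(s-\cdot))G(u(\cdot))$ directly, which forces it to invoke the ``double difference'' semigroup bound (the second inequality of Lemma \ref{l0}) in the term it calls $C_{22}$. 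You instead factor out $(S(t-s)-{\rm id})$, use the smoothing estimate $\|S(t-s)-{\rm id}\|_{L(V_\beta,V)}\le c(t-s)^\beta$, and pay for it with a $V_\beta$-bound on $\int_0^sS(s-r)G(u(r))d\omega(r)$ carrying the factor $s^{\beta'-\beta}$. That $V_\beta$-bound is exactly the content of Corollary \ref{localcoro1}, and the paper itself uses your style of argument later (for the term $B_{12}$ in the proof of Lemma \ref{locall20}), so the route is legitimate; it trades the double-difference estimate of Lemma \ref{l0} for an analytic-smoothing estimate that must be proved by essentially rerunning the $C_{11}$, $C_{12}$ computation with the extra singularity $(s-r)^{-\beta}$, and your parameter check ($\alpha+\beta'>\beta$, $2\alpha-1+\beta<1$) is what makes that rerun integrable. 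Two small imprecisions, neither fatal: for the uniform bound, boundedness of $\|S(t-r)\|_{L(V)}$ alone does not control the compensated derivative $\hat D_{0+}^\alpha(S(t-\cdot)G(u(\cdot)))$ --- one also needs the H\"older-type bound $\|S(t-r)-S(t-q)\|_{L(V)}\le c(r-q)^{\alpha'}(t-r)^{-\alpha'}$ from the first inequality of Lemma \ref{l0}, which is what the paper uses in $C_{11}$; and pulling $(S(t-s)-{\rm id})$ through the fractional integral should be justified, though it follows immediately from the statement of Lemma \ref{localln1}.
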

\begin{proof}
By $\eqref{eq1}$ and $\eqref{eq2}$, for $0< s<t \leq T$ we have that $|(S(t)-S(s))u_0| \leq c \frac{(t-s)^\beta}{s^\beta} |u_0|$,  and then $\|S(\cdot)u_0\|_{\beta,\sim} \le c|u_0|$.
We now use the following abbreviations
\begin{align*}
C_{11}(s,t)&:=(-1)^\alpha\int_s^t \hat D_{s+}^\alpha (
S(t-\cdot)G(u(\cdot)))[r]D_{t-}^{1-\alpha}\omega_{t-}[r]dr, \\
C_{12}(s,t)&:=-(-1)^{2\alpha-1}\int_s^t D_{s+}^{2\alpha-1}(
S(t-\cdot)DG(u(\cdot)))[r] D_{t-}^{1-\alpha}\mathcal{D}
_{t-}^{1-\alpha}v[r]dr, \\
C_{21}(0,s,t)&:=(-1)^\alpha\int_0^s \hat D_{0+}^\alpha
((S(t-\cdot)-S(s-\cdot))G(u(\cdot)))[r]D_{s-}^{1-\alpha}\omega_{s-}[r]dr, \\
C_{22}(0,s,t)&:=-(-1)^{2\alpha-1}\int_0^s
D_{0+}^{2\alpha-1}((S(t-\cdot)-S(s-\cdot))DG(u(\cdot)))[r]
D_{s-}^{1-\alpha}\mathcal{D} _{s-}^{1-\alpha}v[r]dr.
\end{align*}
We are going to estimate the $V$-norm
of the following expression:
\begin{align*}
\int_0^t&S(t-r)G(u(r))d\omega(r)
-\int_0^sS(s-r)G(u(r))d\omega(r)
\\
=& \int_s^t S(t-r)G(u(r)) d\omega(r)+ \int_0^s
(S(t-r)-S(s-r))G(u(r)) d\omega(r) \\
=&C_{11}(s,t)+C_{12}(s,t)+C_{21}(0,s,t)+C_{22}(0,s,t)
\end{align*}
where this equality holds by Lemma \ref{localln1}.
For the first expression, for an $\alpha^\prime$ with $\alpha^\prime-\alpha>0$ sufficiently small we obtain
\begin{align*}
  |C_{11}&(s,t)| \le \frac{1}{\Gamma(1-\alpha)}\int_s^t\bigg(\frac{|S(t-r)G(u(r))|}{(r-s)^\alpha}+\alpha\int_s^r\frac{|(S(t-r)-S(t-q))G(u(r))|}{(r-q)^{1+\alpha}}dq\\
  +&\alpha\int_s^r\frac{|S(t-q)(G(u(r))-G(u(q))-DG(u(q))(u(r)-u(q)))|}{(r-q)^{1+\alpha}}dq\bigg)|||\omega|||_{\beta^\prime}(t-r)^{\alpha+\beta^\prime-1}dr\\
  \le&
  c\int_s^t\bigg(\frac{c_G+c_{DG}|u(r)|}{(r-s)^\alpha}
  +\int_s^r\frac{(r-q)^{\alpha^\prime}(c_G+c_{DG}|u(r)|)}{(t-r)^{\alpha^\prime}(r-q)^{1+\alpha}}dq\\
  +&
  \int_s^r\frac{c_{D^2G}\|u\|_{\beta,\sim}^2(r-q)^{2\beta}}{q^{2\beta}(r-q)^{1+\alpha}}dq\bigg)|||\omega|||_{\beta^\prime}(t-r)^{\alpha+\beta^\prime-1}dr.
\end{align*}
Now evaluating these integrals by \eqref{localeq4} we see that
$$\sup_{0<s<t\leq T}s^\beta \frac{|C_{11}(s,t)|}{(t-s)^\beta}\leq c |||\omega|||_{\beta^\prime} T^{\beta^\prime}(1+\|U\|_W^2).$$

Now let us consider $C_{22}(0,s,t)$. By using Lemma \ref{l0} and (\ref{eq26}) we have that
\begin{align*}
  |& C_{22}(0,s,t)|  \le \frac{c(||v||_{\beta+\beta^\prime,\sim}+\|u\|_{\beta,\sim}|||\omega|||_{\beta^\prime})}{\Gamma(2-2\alpha)}\int_0^s\bigg(\frac{|(S(t-r)-S(s-r))DG(u(r))|}{r^{2\alpha-1}} \\
   & +(2\alpha-1)\int_0^r\frac{|(S(t-r)-S(t-q)-(S(s-r)-S(s-q)))DG(u(r))|}{(r-q)^{2\alpha}}dq\\
   &+(2\alpha-1)\int_0^r\frac{|(S(t-q)-S(s-q))(DG(u(r))-DG(u(q)))|}{(r-q)^{2\alpha}}dq\bigg)\frac{(s-r)^{2\alpha+\beta+\beta^\prime-2}}{r^\beta}dr\\
   \le& c (||v||_{\beta+\beta^\prime,\sim}+\|u\|_{\beta,\sim}|||\omega|||_{\beta^\prime})\int_0^s\bigg(\frac{ c_{DG}(t-s)^\beta}{(s-r)^{\beta}r^{2\alpha-1}}
   +\int_0^r\frac{c_{DG} (r-q)^{2\alpha^\prime-1}(t-s)^\beta }{(s-r)^{2\alpha^\prime-1+\beta}(r-q)^{2\alpha}}dq\\
   &+\int_0^r\frac{c_{D^2G}(t-s)^\beta(r-q)^{\beta}\|u\|_{\beta,\sim}}{q^{\beta}(s-r)^\beta(r-q)^{2\alpha}}dq\bigg)\frac{(s-r)^{2\alpha+\beta+\beta^\prime-2}}{r^\beta}dr
\end{align*}
and by {\bf (H1)} and \eqref{localeq4} we obtain that
$$\sup_{0<s<t\leq T}s^\beta \frac{|C_{22}(0,s,t)|}{(t-s)^\beta}\leq c  T^{\beta^\prime}(1+|||\omega|||_{\beta^\prime})(1+||U||_{W}^2).$$
The remaining terms can be estimated in a similar manner. Setting $s=0$ and considering $C_{11}(0,t)+C_{12}(0,t)$ we obtain the same estimate for the norm of $C([0,T];V)$, hence the proof is complete.

\end{proof}

The following conclusion with respect to the regularity of the above integrals holds true:

\begin{corollary}\label{localcoro1}
Let $\beta<\beta^\prime<H$, then there exists a $c>0$ independent of $s<t$ and $\omega$ such that for $U\in {\hat W_{0,T}}$
\begin{equation*}
 \bigg| (-A)^\beta\int_0^tS(t-r)G(u(r))d\omega\bigg|\le c T^{\beta^\prime-\beta}(1+|||\omega|||_{\beta^\prime})(1+||U||_{W}^2).
\end{equation*}
\end{corollary}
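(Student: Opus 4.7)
The plan is to reuse, verbatim in spirit, the decomposition from the proof of Lemma \ref{locall19b} with $s=0$, namely
$$\int_0^t S(t-r) G(u(r))\,d\omega = C_{11}(0,t)+C_{12}(0,t),$$
and to push $(-A)^\beta$ inside both integrals using its commutation with the analytic semigroup $S$. The key replacement relative to that proof is that every factor $S(t-r)$ (or $S(t-q)$) appearing in the integrands is now replaced by $(-A)^\beta S(t-r)$ (respectively $(-A)^\beta S(t-q)$), and every increment $S(t-r)-S(t-q)$ by $(-A)^\beta(S(t-r)-S(t-q))$. By \eqref{eq1} with $\kappa=0,\gamma=\beta$ and by Lemma \ref{l0} with $\kappa=0,\gamma=\beta$,
$$\|(-A)^\beta S(t-r)\|_{L(V,V)}\le c(t-r)^{-\beta},\quad \|(-A)^\beta(S(t-r)-S(t-q))\|_{L(V,V)}\le c(r-q)^{\mu}(t-r)^{-\mu-\beta}$$
for any $\mu\in[0,1]$; the rest of the structure of the estimates is unchanged.

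Concretely, for $C_{11}(0,t)$ I would expand $\hat D_{0+}^\alpha (S(t-\cdot)G(u(\cdot)))[r]$ into its three summands (anchor term, semigroup--increment term, and $G$--Taylor--remainder term) and pair each with $|D_{t-}^{1-\alpha}\omega_{t-}[r]|\le c|||\omega|||_{\beta^\prime}(t-r)^{\alpha+\beta^\prime-1}$ from Lemma \ref{locall22}. The extra $(-A)^\beta$ introduces an additional $(t-r)^{-\beta}$ (or $(t-q)^{-\beta}$) in every summand, yielding an $r$--integrand with exponent $(t-r)^{\alpha+\beta^\prime-1-\beta}$, which is integrable since $\beta^\prime>\beta$ and $\alpha>0$. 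For $C_{12}(0,t)$ one proceeds analogously, now pairing with the bound \eqref{eq26} which, specialized at $s=0$, supplies $r^{-\beta}(t-r)^{\beta+\beta^\prime+2\alpha-2}$; multiplying by the extra $(t-r)^{-\beta}$ produces $(t-r)^{\beta^\prime+2\alpha-2}$, still integrable thanks to $\alpha+\beta^\prime\ge\alpha+\beta>1$ from (H1). Control of $G$, $DG$, and the Taylor remainder is provided by Lemma \ref{locall2} exactly as in Lemma \ref{locall19b}.

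Carrying the remaining $r$-- and $q$--integrations through \eqref{localeq4} produces a single factor $t^{\beta^\prime-\beta}\le T^{\beta^\prime-\beta}$, multiplied by the same polynomial combinations of $c_G,c_{DG},c_{D^2G},\|u\|_{\beta,\sim},\|v\|_{\beta+\beta^\prime,\sim}$ and $|||\omega|||_{\beta^\prime}$ encountered in Lemma \ref{locall19b}, all dominated by $c(1+|||\omega|||_{\beta^\prime})(1+\|U\|_W^2)$. The principal technical point is to verify that every resulting exponent remains strictly above $-1$ simultaneously under (H1); the most delicate case is the $G$--Taylor--remainder summand, where a $q^{-2\beta}$ singularity at the origin combines with a $(r-q)^{2\beta-1-\alpha}$ singularity on the diagonal, both handled by $2\beta<1$ and $2\beta>\alpha$. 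Summing the $C_{11}(0,t)$ and $C_{12}(0,t)$ estimates then yields the claimed bound.
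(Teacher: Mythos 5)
Your proposal is correct and follows essentially the same route the paper indicates: the authors omit the proof precisely because it amounts to estimating $|(-A)^\beta C_{11}(0,t)+(-A)^\beta C_{12}(0,t)|$ by rerunning the proof of Lemma \ref{locall19b} with the extra factor $(t-r)^{-\beta}$ supplied by \eqref{eq1} and Lemma \ref{l0}, exactly as you do. Your exponent bookkeeping (in particular that $\beta'>\beta$ absorbs the extra singularity and yields $T^{\beta'-\beta}$, and that $2\beta>\alpha$, $2\beta<1$, and $2\alpha<1+\beta$ handle the delicate terms) checks out.
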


We omit the proof of this result since we only would need to estimate $|(-A)^\beta C_{11}(0,t)+(-A)^\beta C_{12}(0,t)|$, where $C_{11}$ and $C_{12}$ were defined in the proof of Lemma \ref{locall19b}. Hence, it suffices to follow the estimates of that result together with the properties of the semigroup $S$.

\begin{lemma}\label{locall23}
Suppose that the conditions of Lemma \ref{locall19b} hold. Then there exists a $c>0$ depending on $\omega$ such that for $U^1,\,U^2\in {\hat W_{0,T}}$
\begin{align*}
&||{\mathcal T}_1(U^1)-{\mathcal T}_1(U^2)||_{\beta,\sim} \le cT^{\beta^\prime}(1+||U^1||_W^2+||U^2||_W^2)||U^1-U^2||_W+c|u_0^1-u_0^2|.
\end{align*}
\end{lemma}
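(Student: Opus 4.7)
The plan is to follow the same decomposition of $\mathcal{T}_1$ used in the proof of Lemma~\ref{locall19b}, now applied to the difference $\mathcal{T}_1(U^1,\omega,u_0^1) - \mathcal{T}_1(U^2,\omega,u_0^2)$. The semigroup contribution produces $S(t)(u_0^1 - u_0^2)$, which by \eqref{eq1}--\eqref{eq2} satisfies $\|S(\cdot)(u_0^1 - u_0^2)\|_{\beta,\sim} \le c|u_0^1 - u_0^2|$, giving the initial-condition term. For the remainder I would split into the four pieces $C_{11}^i,C_{12}^i,C_{21}^i,C_{22}^i$ ($i=1,2$) from that earlier proof, and bound each pairwise difference $C_{jk}^1 - C_{jk}^2$ separately.

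For the $C_{11}$- and $C_{21}$-type pieces, which involve the compensated fractional derivative $\hat D^\alpha_{s+}(S(t-\cdot)G(u^i(\cdot)))$, I would take the difference of the two integrands. The pointwise part is controlled by $c_{DG}|u^1(r) - u^2(r)|$. The more delicate integrand is the double-difference expression
$$\big[G(u^1(r)) - G(u^1(q)) - DG(u^1(q))(u^1(r) - u^1(q))\big] - \big[G(u^2(r)) - G(u^2(q)) - DG(u^2(q))(u^2(r) - u^2(q))\big],$$
which is precisely controlled by the last bullet of Lemma~\ref{locall2}: a sum of $c_{D^2G}$-terms of the form $(|u^1(r)-u^1(q)|+|u^2(r)-u^2(q)|)\,|u^1(r)-u^2(r)-(u^1(q)-u^2(q))|$ plus a $c_{D^3G}$-term of cubic type. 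Pairing these with the $(t-r)^{\alpha+\beta^\prime-1}|||\omega|||_{\beta^\prime}$ bound on $D_{t-}^{1-\alpha}\omega_{t-}[r]$ from Lemma~\ref{locall22} and integrating via \eqref{localeq4} yields a contribution of the form $cT^{\beta^\prime}(1+\|U^1\|_W^2+\|U^2\|_W^2)\|U^1-U^2\|_W$.

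For the $C_{12}$- and $C_{22}$-type terms, which couple $DG(u^i)$ with the area $v^i$, I would use the bilinear split $DG(u^1)v^1 - DG(u^2)v^2 = (DG(u^1)-DG(u^2))v^1 + DG(u^2)(v^1-v^2)$, applying the third bullet of Lemma~\ref{locall2} (and its double-difference version, the sixth bullet, when estimating the H\"older seminorm) to the first summand, while the second summand requires only the boundedness of $DG$ together with $\|v^1 - v^2\|_{\beta+\beta^\prime,\sim}$. The estimate \eqref{eq26} is then applied to $v^1-v^2$ in place of $v$, and the spatial-temporal integrations proceed exactly as in Lemma~\ref{locall19b}, producing the same $T^{\beta^\prime}$ factor. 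The $C_{21}^1 - C_{21}^2$ and $C_{22}^1 - C_{22}^2$ pieces additionally exploit Lemma~\ref{l0} to handle the second semigroup differences, exactly as before.

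The main obstacle is the bookkeeping of the weighted $C_{\beta,\sim}$ structure: since $u^i \in C_{\beta,\sim}([0,T];V)$ and not $C_{\beta}([0,T];V)$, the H\"older ratios $|u^i(r)-u^i(q)|/(r-q)^\beta$ carry a $q^{-\beta}$ singularity, and similarly $v^1-v^2$ contributes $r^{-\beta}$ through \eqref{eq26}. These singular weights must be combined with the $(r-s)^{-\alpha}$ and $(t-r)^{\alpha+\beta^\prime-1}$ factors coming from the fractional derivatives so that the Beta-function integrations \eqref{localeq4} still yield integrable exponents and the overall time factor is exactly $T^{\beta^\prime}$. This is precisely the role played by hypothesis \textbf{(H1)} on $\alpha,\beta,\beta^\prime$ in the proof of Lemma~\ref{locall19b}, and the same constraints cover the present estimate. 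Summing the four contributions and the semigroup term then yields the claimed inequality.
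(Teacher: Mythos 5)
Your proposal is correct and follows essentially the same route as the paper: the paper's proof records exactly the Lipschitz estimate $\|G(u^1(r))-G(u^2(r))\|_{L_2(V,\hat V)}\le c_{DG}\|\Delta u\|_{\beta,\sim}$ and the double-difference bound from the last bullet of Lemma \ref{locall2} (yielding the $(1+\|U^1\|_W^2+\|U^2\|_W^2)$ factor), and then simply says to repeat the $C_{11},C_{12},C_{21},C_{22}$ decomposition of Lemma \ref{locall19b} for the differences, which is precisely what you do, including the bilinear split for the $DG\cdot v$ terms and the treatment of $S(t)(u_0^1-u_0^2)$.
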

\begin{proof}
Let us denote $\Delta u=u^1-u^2,\,\Delta U=(u^1-u^2,v^1-v^2)$. Then for $r\in [0,T]$ by Lemma \ref{locall2} we have
\begin{align*}
  &\|G(u^1(r))-G(u^2(r))\|_{L_2(V,\hat V)}\le c_{DG}\|\Delta u\|_{\beta,\sim}\\
\end{align*}
and
\begin{align*}
    \|G&(u^1(r))-G(u^1(q))-DG(u^1(q))(u^1(r)-u^1(q))
    \\-&(G(u^2(r))-G(u^2(q))-DG(u^2(q))(u^2(r)-u^2(q)))\|_{L_2(V,\hat V)}\\
    &\le c_{D^2G}
    (\|u^1\|_{\beta,\sim}+\|u^2\|_{\beta,\sim})\|\Delta u\|_{\beta,\sim}\frac{(r-q)^{2\beta}}{q^{2\beta}}\\
    &+c_{D^3G}\|u^2\|_{\beta,\sim}\frac{(r-q)^\beta}{q^\beta}\sup_{r\in[0,T]}|\Delta u(r)|(2\|u^1\|_{\beta,\sim}+\|u^2\|_{\beta,\sim})\frac{(r-q)^\beta}{q^\beta}\\
    &\le c
    (\|u^1\|_{\beta,\sim}+\|u^2\|_{\beta,\sim})\|\Delta u\|_{\beta,\sim}\frac{(r-q)^{2\beta}}{q^{2\beta}} (1+\|u^2\|_{\beta,\sim}).
    \end{align*}
Now we can follow the proof of Lemma \ref{locall19b}.
\end{proof}

Up to now we have obtained appropriate estimates for the first component $\tT_1$ of the operator $\tT$ given by \eqref{T}. Now we aim at getting the corresponding estimates for $\tT_2$, the second component of $\tT$. To this end, we need the two following Lemmata, dealing with estimates of some of the terms appearing in the expression of $\tT_2$.

\begin{lemma}\label{locall24}
Under the Hypothesis ${\bf (H)}$ the following statements hold:

(i) For the mapping
\begin{align*}
&e\in V\mapsto\omega_S(s,t)e=(-1)^{-\alpha}\int_s^t(S(\xi-s)e)\otimes_Vd\omega(\xi)
\end{align*}
the following properties hold true: for $0\le s\le r\le t\le T,\,e\in V$ and $1/3<\beta^\prime <\beta^{\prime\prime}<H$,
\begin{align*}
\|\omega_S(r,t)e-\omega_S(s,t)e\|& \le c(r-s)^{\beta^\prime}(|||\omega|||_{\beta^\prime}+|||\omega|||_{\beta^{\prime\prime}})|e|,\\
\|\omega_S(s,t)e\|&\le c(t-s)^{\beta^\prime}|||\omega|||_{\beta^{\prime}} |e|,\\
\|\omega_S(s,t)(-A)^{\beta^\prime} e\|&\le c|||\omega|||_{\beta^{\prime\prime}}|e|.
\end{align*}

(ii) The mapping
\begin{equation*}
    E\in L_2(V,\hat V) \mapsto S_\omega(s,t)E=  \int_s^t S(t-r)Ed\omega(r)
\end{equation*}
is in $L_2(L_2(V,\hat V),V)$, with norm bounded by $c|||\omega|||_{\beta^\prime}(t-s)^{\beta^\prime}\|E\|_{L_2(V,\hat V)}$.

\end{lemma}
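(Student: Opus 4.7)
The plan is to use the fractional integration by parts device of Lemma \ref{locall21} to rewrite the vector--valued Young integrals defining $\omega_S(s,t)e$ and $S_\omega(s,t)E$, and then to bound each factor by the analytic semigroup estimates \eqref{eq1}--\eqref{eq2}, Lemma \ref{l0} and Lemma \ref{locall22}; the resulting time integrals are collapsed via the Beta--type identity \eqref{localeq4}.

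For the second inequality of (i), I write
\begin{equation*}
\omega_S(s,t)e=\int_s^t D_{s+}^{\alpha}(S(\cdot-s)e)[\xi]\otimes_V D_{t-}^{1-\alpha}\omega_{t-}[\xi]\,d\xi.
\end{equation*}
Using $S(\xi-s)-S(q-s)=(S(\xi-q)-\mathrm{id})S(q-s)$ together with \eqref{eq1}--\eqref{eq2} gives $\|S(\xi-s)-S(q-s)\|_{L(V,V)}\le c(\xi-q)^\mu(q-s)^{-\mu}$ for any $\mu\in(\alpha,1)$, hence $|D_{s+}^{\alpha}(S(\cdot-s)e)[\xi]|_V\le c(\xi-s)^{-\alpha}|e|$; combining with Lemma \ref{locall22} and \eqref{localeq4} produces the $(t-s)^{\beta^\prime}$--bound. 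For the first inequality, the decomposition
\begin{equation*}
\omega_S(r,t)e-\omega_S(s,t)e=-\omega_S(s,r)e+(-1)^{-\alpha}\int_r^t(S(\xi-r)-S(\xi-s))e\otimes_V d\omega(\xi)
\end{equation*}
handles the first piece via the second bound (yielding the $|||\omega|||_{\beta^\prime}$ contribution). For the remaining integral I use fractional calculus at base point $r$ with a new exponent $\alpha^{\prime\prime}\in(1-\beta^{\prime\prime},1-\beta^\prime)$, write $S(\xi-r)-S(\xi-s)=S(\xi-r)(\mathrm{id}-S(r-s))$, and invoke the second inequality of Lemma \ref{l0} with $\rho=0$, $\nu=\beta^\prime$ and $\eta$ slightly above $\alpha^{\prime\prime}$ to get $|D_{r+}^{\alpha^{\prime\prime}}((S(\cdot-r)-S(\cdot-s))e)[\xi]|\le c(r-s)^{\beta^\prime}(\xi-r)^{-\beta^\prime-\alpha^{\prime\prime}}|e|$. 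Pairing with the $\beta^{\prime\prime}$--version of Lemma \ref{locall22} and integrating produces an extra factor $(t-r)^{\beta^{\prime\prime}-\beta^\prime}\le T^{\beta^{\prime\prime}-\beta^\prime}$, absorbed into the constant.

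The third inequality of (i) is the real difficulty. A naive estimate $|D_{s+}^{\alpha}((-A)^{\beta^\prime}S(\cdot-s)e)[\xi]|\le c(\xi-s)^{-\alpha-\beta^\prime}|e|$ leads to a divergent integral $\int_s^t(\xi-s)^{-\alpha-\beta^\prime}(t-\xi)^{\alpha+\beta^\prime-1}d\xi$, because \textbf{(H1)} forces $\alpha+\beta^\prime>1$. The remedy is to choose a fresh exponent $\alpha^\prime\in(1-\beta^{\prime\prime},1-\beta^\prime)$, a non--empty interval precisely because $\beta^\prime<\beta^{\prime\prime}$; this restores the $(\xi-s)$--integrability at the cost of using $|||\omega|||_{\beta^{\prime\prime}}$ in the bound on $D_{t-}^{1-\alpha^\prime}\omega_{t-}$. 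The residual $(t-s)^{\beta^{\prime\prime}-\beta^\prime}$ is bounded on $[0,T]$.

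For (ii), the same fractional integration by parts delivers $|S_\omega(s,t)E|_V\le c|||\omega|||_{\beta^\prime}(t-s)^{\beta^\prime}\|E\|_{L_2(V,\hat V)}$ through $\|(S(t-r)-S(t-q))E\|_{L_2(V,\hat V)}\le c(r-q)^\mu(t-r)^{-\mu}\|E\|_{L_2(V,\hat V)}$ for $\mu\in(\alpha,\alpha+\beta^\prime)$. The Hilbert--Schmidt membership in $L_2(L_2(V,\hat V),V)$ follows from the operator ideal property: factoring $S_\omega(s,t)=\iota\circ\tilde S_\omega(s,t)$, where $\tilde S_\omega(s,t):L_2(V,\hat V)\to\hat V$ is the same integral viewed as $\hat V$--valued and $\iota:\hat V\hookrightarrow V$ is the Hilbert--Schmidt embedding of \textbf{(H2)}, yields $\|S_\omega(s,t)\|_{L_2(L_2(V,\hat V),V)}\le c_{V,\hat V}\,\|\tilde S_\omega(s,t)\|_{\mathrm{op}}\le c|||\omega|||_{\beta^\prime}(t-s)^{\beta^\prime}$. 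The principal obstacle throughout is the third bound in (i): no single fractional--derivative exponent can simultaneously control the $(\xi-s)$--singularity induced by $(-A)^{\beta^\prime}$ and the $(t-\xi)$--behaviour of $D_{t-}^{1-\alpha}\omega_{t-}$ via the $\beta^\prime$--norm alone, so one must trade Hölder regularity by introducing $\beta^{\prime\prime}>\beta^\prime$.
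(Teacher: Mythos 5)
Your treatment of part (i) is essentially the paper's own argument: the same decomposition of $\omega_S(r,t)e-\omega_S(s,t)e$ into $-\omega_S(s,r)e$ plus the integral over $[r,t]$ of the semigroup difference, the same appeal to Lemma \ref{l0} for the first and second differences of $S$, and, crucially, the same device of running the fractional integration by parts with a fresh order of differentiation chosen in the nonempty interval $(1-\beta^{\prime\prime},1-\beta^\prime)$, so that the singularity $(\xi-s)^{-\alpha-\beta^\prime}$ created by $(-A)^{\beta^\prime}$ becomes integrable while the derivative of $\omega$ is paid for with the $\beta^{\prime\prime}$--H\"older norm. This is precisely the choice ``$\beta^\prime+\alpha^\prime<1<\beta^{\prime\prime}+\alpha$'' made at the start of the paper's proof, and you have correctly identified it as the crux of the third estimate.

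Part (ii) has a genuine gap in the justification of the Hilbert--Schmidt property. You factor $S_\omega(s,t)=\iota\circ\tilde S_\omega(s,t)$ with $\tilde S_\omega(s,t):L_2(V,\hat V)\to\hat V$ being ``the same integral viewed as $\hat V$--valued''. But the integrand involves $S(t-r)$ acting on vectors of $\hat V$, and under {\bf (H2)} the only assumption is that $\hat V\subset V$ with Hilbert--Schmidt embedding; nothing guarantees that $S(t-r)$ maps $\hat V$ into $\hat V$, let alone with uniformly bounded $\hat V$--operator norm, so $\tilde S_\omega(s,t)$ need not take values in $\hat V$ and the factorization is unavailable. (It does work in the special case $\hat V=V_\kappa$, where $S$ commutes with $(-A)^\kappa$, but the lemma is stated for general $\hat V$.) The repair is to compose in the opposite order, which is what the paper does via Lemma \ref{locall32} (ii): for fixed $r$ the evaluation map $E\mapsto E\,D_{t-}^{1-\alpha}\omega_{t-}[r]$ is Hilbert--Schmidt from $L_2(V,\hat V)$ into $V$ with norm $c_{V,\hat V}\,|D_{t-}^{1-\alpha}\omega_{t-}[r]|$ --- here the Hilbert--Schmidt embedding is applied to the value $Ev\in\hat V$ \emph{before} the semigroup acts --- and one then composes with the bounded operator $D_{s+}^{\alpha}S(t-\cdot)[r]\in L(V)$; the ideal property of Hilbert--Schmidt operators, Pettis' theorem for the measurability of the operator--valued integrand, and the triangle inequality under the integral then give the stated bound. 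Your quantitative estimate with $\mu\in(\alpha,\alpha+\beta^\prime)$ is the right one for the scalar bound; only the membership in $L_2(L_2(V,\hat V),V)$ needs to be rerouted in this way.
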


\begin{proof}
Let $\beta^\prime <\beta^{\prime\prime}<H$ such that for
$\alpha<\alpha^{\prime}<1$ we have
    $\beta^\prime+\alpha^{\prime}<1<\beta^{\prime\prime}+\alpha$.
Then
\begin{align}\label{ex1m}
\begin{split}
    \omega_S(r,t)e-\omega_S(s,t)e=&(-1)^{\alpha}\int_r^t(S(\xi-r)e - S(\xi-s)e)\otimes_Vd\omega(\xi)\\
    &-(-1)^{\alpha}\int_s^rS(\xi-s)e\otimes_Vd\omega(\xi).
    \end{split}
\end{align}
Interpreting these integrals in a fractional sense and using Lemma \ref{l0} we obtain
\begin{align*}
    |D^{\alpha}_{r+}(S(\cdot-r)e-S(\cdot-s)e)[\xi]|&\le c \bigg(\frac{(r-s)^{\beta^\prime}}{(\xi-r)^{\alpha +\beta^\prime}}
    +\alpha
    \int_r^\xi\frac{(r-s)^{\beta^\prime}(\xi-q)^{\alpha^{\prime}}}
    {(\xi-q)^{1+\alpha}(q-r)^{\alpha^{\prime}+\beta^\prime}}dq\bigg)|e|\\
    &\le c(r-s)^{\beta^\prime}(\xi-r)^{-\alpha -\beta^\prime}|e|.
\end{align*}
Moreover, since $\beta^{\prime\prime}< H$, due to (H3) $\omega\in C_{\beta^{\prime\prime}}([0,T];V)$ (and in particular $\omega\in C_{\beta^{\prime}}([0,T],V)$), then by Lemma \ref{locall22}
\begin{align*}
    &|D^{1-\alpha}_{t-}\omega_{t-}[\xi]|\le c|||\omega|||_{\beta^{\prime\prime}}(t-\xi)^{\alpha+\beta^{\prime\prime}-1}.
\end{align*}
Hence, by applying \eqref{localeq4}, the first integral on the
right-hand side of \eqref{ex1m} is bounded in particular by
$c(r-s)^{\beta^\prime} |||\omega|||_{\beta^{\prime\prime}}|e|$.

Furthermore, for the last term in \eqref{ex1m}, thanks to {\bf (H1)} and the regularity properties of the semigroup we obtain
\begin{align*}
    \int_ s^r
    \|D^{\alpha}_{s+}&S(\cdot-s)e[\xi]\otimes_VD^{1-\alpha}_{r-}\omega_{r-}[\xi]\|d\xi
   \le c|||\omega|||_{\beta^{\prime}}(r-s)^{\beta^\prime}|e|.
\end{align*}
The second statement of (i) follows directly from the first one taking $r=t$. For the last conclusion of (i), for the parameters chosen at the beginning of the proof,
  \begin{align*}
   |D_{s+}^{\alpha}(S(\cdot-s)&(-A)^{\beta^\prime}e)[\xi]|\le  c\bigg(\frac{|e|}{
(\xi-s)^{\alpha+\beta^\prime}}\\
&+\int_s^\xi\frac{(\xi-q)^{\alpha^{\prime}}|e|}{(\xi-q)^{1+\alpha}(q-s)^{\alpha^{\prime}+\beta^\prime}}dq\bigg)\leq c |e| (\xi-s)^{-\alpha-\beta^\prime},
\end{align*}
and therefore, since in particular $\beta^\prime+\alpha <1$,
\begin{align*}
    \int_ s^t
    \|D_{s+}^{\alpha}&(S(\cdot-s)(-A)^{\beta^\prime}e)[\xi]\otimes_VD^{1-\alpha}_{t-}\omega_{t-}[\xi]\|d\xi\\
   \leq &c|||\omega|||_{\beta^{\prime \prime }} |e| \int_
    s^t  (\xi-s)^{-\alpha-\beta^\prime}    (t-\xi)^{\alpha+\beta^{\prime \prime}-1}d\xi
    \\\le& c|||\omega|||_{\beta^{\prime \prime}}(t-s)^{\beta^{\prime \prime} -\beta^\prime}|e| \leq  c |||\omega|||_{\beta^{\prime \prime}} |e|.
\end{align*}
Now we prove (ii). First of all, note that $S_\omega$ is well-defined since, thanks to Remark \ref{localremark}, it suffices the H\"older--continuity of $r\mapsto S(t-r)E$ on any interval $[\eps,t]$.
In addition by the embedding $\hat V\subset V$ the mapping
\begin{equation*}
    L_2(V, \hat V) \ni E\mapsto ED_{t-}^{1-\alpha}\omega_{t-}[r]\in V
\end{equation*}
is in $L_2(L_2(V,\hat V),V)$, and the norm can be estimated  $c_{V,\hat V}|D_{t-}^{1-\alpha}\omega_{t-}[r]|$,  see Lemma \ref{locall32} (ii). Furthermore, the integrand $D^\alpha_{s+} S(t-\cdot) [r]\cdot D^{1-\alpha}_{t-} \omega_{t-} [r]$ is weakly measurable  with respect to $L_2(L_2(V,\hat V),V)$ such that by Pettis' theorem
the integrand is measurable. Because of $D_{t-}^{1-\alpha}E \omega_{t-} [r]=ED_{t-}^{1-\alpha} \omega_{t-} [r]$, we get
\begin{align*}
   \|S_\omega(s,t)\cdot\|_{L_2(L_2(V,\hat V),V)}&=\bigg\| \int_s^t S(t-r)\cdot d\omega(r)\bigg\|_{L_2(L_2(V,\hat V),V)}\\& \leq  \int_s^t \| D^\alpha_{s+} S(t-\cdot) [r]\|_{L(V)} \|\cdot D^{1-\alpha}_{t-} \omega_{t-} [r]\|_{L_2(L_2(V,\hat V),V)}dr\\
   & \le c|||\omega|||_{\beta^{\prime}} (t-s)^{\beta^\prime}.
   \end{align*}
\end{proof}

\begin{corollary}\label{coro1}
Let $(\omega^n)_{n\in\NN}$ be a sequence converging to $\omega$ in $C_{\beta^\prime}([0,T];V)$. Then
\[
\lim_{n\to\infty}\sup_{\begin{array}{c}0\le s<t\le T\\
|e|=1\end{array}}\frac{\|(\omega-\omega^n)_S(s,t)e\|}{(t-s)^{\beta^\prime}}=0,\]
\[\lim_{n\to\infty}\sup_{0\le s<t\le T}\frac{\|S_{\omega-\omega^n}(s,t)\cdot\|_{L_2(L_2(V,\hat V),V)}}{(t-s)^{\beta^\prime}}=0.
\]
From the previous result we also obtain that
\[
D^{\alpha}_{s+}(S(\cdot-s)(S_\omega(q,s)\cdot))[\xi]\otimes_VD^{1-\alpha}_{r-}\omega_{r-}[\xi]
\]
is weakly $L_2(L_2(V,\hat V),V\otimes V)$--measurable.
\end{corollary}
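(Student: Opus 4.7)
Both quantitative statements are immediate consequences of Lemma \ref{locall24}, exploiting that the maps $\omega\mapsto\omega_S(s,t)$ and $\omega\mapsto S_\omega(s,t)$ are linear in $\omega$. Applying the second inequality of Lemma \ref{locall24}(i) to $\omega-\omega^n\in C_{\beta^\prime}([0,T];V)$ in place of $\omega$ yields
\[
\sup_{\substack{0\le s<t\le T\\|e|=1}}\frac{\|(\omega-\omega^n)_S(s,t)e\|}{(t-s)^{\beta^\prime}}\le c\,|||\omega-\omega^n|||_{\beta^\prime},
\]
and the convergence in Hypothesis {\bf (H3)} drives the right-hand side to zero. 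The second limit is handled identically, using Lemma \ref{locall24}(ii) with $\omega-\omega^n$ in place of $\omega$.

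For the weak measurability, I would reduce to scalar coordinates. By Lemma \ref{locall24}(ii) the operator $S_\omega(q,s)\in L_2(L_2(V,\hat V),V)$ is fixed, so for each $E\in L_2(V,\hat V)$ the vector $v_E:=S_\omega(q,s)E\in V$ does not depend on $\xi$. The expression in question, evaluated at $E$, is therefore
\[
D^{\alpha}_{s+}(S(\cdot-s)v_E)[\xi]\otimes_V D^{1-\alpha}_{r-}\omega_{r-}[\xi],
\]
whose scalar $(e_i\otimes e_j)$-coordinate equals
\[
D^{\alpha}_{s+}(e_i,S(\cdot-s)v_E)_V[\xi]\cdot D^{1-\alpha}_{r-}(e_j,\omega(\cdot))_V[\xi],
\]
a product of two classical fractional derivatives of continuous scalar functions, hence Lebesgue measurable in $\xi$. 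Since these coordinates generate the full element of $L_2(L_2(V,\hat V),V\otimes V)$ via the canonical bilinear pairing, Pettis' theorem delivers the asserted weak measurability.

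Equivalently, and closer to the hint \emph{from the previous result}, one can approximate $\omega$ by smooth $\omega^n$: for such $\omega^n$ the analogous expression with $\omega^n$ in place of $\omega$ is strongly continuous in $\xi$, and the convergence $S_{\omega^n}\to S_\omega$ just proved in (ii), together with $|||\omega^n-\omega|||_{\beta^\prime}\to 0$, shows that the target expression is a pointwise weak limit in $L_2(L_2(V,\hat V),V\otimes V)$ of strongly measurable maps; such limits are weakly measurable. The only mild technicality is passing the fractional derivative $D^{\alpha}_{s+}$ through the limit, which is handled by dominated convergence inside the defining integral of $D^{\alpha}_{s+}$, using the uniform bounds from the second convergence. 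Thus the main obstacle is essentially bookkeeping rather than substantive: no ingredient beyond Lemma \ref{locall24}, Hypothesis {\bf (H3)} and Pettis' theorem is required.
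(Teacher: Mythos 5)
Your proposal is correct and matches the paper's (implicit) argument: the paper offers no separate proof, presenting both limits as immediate consequences of Lemma \ref{locall24} applied to $\omega-\omega^{n}$ via the linearity of $\omega\mapsto\omega_{S}$ and $\omega\mapsto S_{\omega}$, and the measurability claim via a coordinatewise reduction exactly as you describe. One cosmetic remark: coordinatewise measurability against the basis $(e_i\otimes_V e_j)$ of the separable target space already yields weak measurability on its own; Pettis' theorem runs in the other direction (weak plus separably valued implies strong), which is how the paper uses it in the proof of Lemma \ref{locall24}(ii), so your invocation of it here is superfluous rather than wrong.
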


\begin{lemma}\label{locall9} Suppose that the hypothesis {\bf (H)} holds. For $0 < s\leq q\le  t\le T$, $\tilde E\in L_2(V\otimes V,\hat V)$  and $U=(u,v)\in
{\hat W_{0,T}}$ consider the mapping
\begin{equation*}
 w(t,s,q):   L_2(V\otimes V,\hat V)\ni \tilde E\mapsto \tilde Ew(t,s,q)\in V\otimes V
\end{equation*}
given by (\ref{wholder}) and \eqref{oS}. Then it is well-defined and satisfies, for $\beta^\prime< \beta^{\prime\prime}$ the estimate
\begin{align*}
\begin{split}
&\|w(t,s,q)\|_{L_2(L_2(V\otimes V,\hat V),V\otimes V)}\le  c ||U||_W s^{-\beta}(q-s)^{\beta+\beta^\prime}(t-s)^{\beta^\prime},
    \end{split}
    \end{align*}
where the constant $c$ depends on $|||\omega|||_{\beta^{\prime\prime}}$ and $\|(\omega\otimes_S\omega)\|_{2\beta^\prime}$.
In particular
\begin{equation*}
  c\sim  (1+|||\omega|||_{\beta^{\prime\prime}}^2)\|(\omega\otimes_S\omega)\|_{2\beta^\prime}.
\end{equation*}
\end{lemma}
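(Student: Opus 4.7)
My plan is to bound the Hilbert--Schmidt norm of $w(t,s,q)$ by splitting the right hand side of (\ref{wholder}) into its three terms $I_1,I_2,I_3$ and estimating each separately. For every fixed $r$ the integrand is an operator in $\tilde E$, and its $L_2(L_2(V\otimes V,\hat V),V\otimes V)$-norm can be controlled as the product of an operator-norm estimate on the left factor (of type $D_{s+}^\alpha$, $\hat D_{s+}^\alpha$, or $D_{s+}^{2\alpha-1}$, applied to an operator-valued function) and a Hilbert--Schmidt estimate on the right factor (of type $D_{q-}^{1-\alpha}$ or the composition $D_{q-}^{1-\alpha}\dD_{q-}^{1-\alpha}$, applied to a $V$-, $V\otimes V$-, or $(\omega_S(t)\otimes\omega)$-valued object). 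I would pull the Hilbert--Schmidt norm under the integral sign by the same Minkowski-type argument that follows Lemma \ref{locall21}, and close each estimate with the Beta identity (\ref{localeq4}); this last step would produce the factor $(q-s)^{\beta+\beta^\prime}$. The prefactor $s^{-\beta}$ tracks the fact that $u$ only lies in $C_{\beta,\sim}$, and the crucial $(t-s)^{\beta^\prime}$ comes out of the Hölder estimate on $\omega_S(\cdot,t)$ in Lemma \ref{locall24}(i).

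For $I_3$ I would split $\omega_S(r,t)=(\omega_S(r,t)-\omega_S(s,t))+\omega_S(s,t)$ inside $D_{s+}^{2\alpha-1}\omega_S(\cdot,t)[r]$: the constant piece produces $(t-s)^{\beta^\prime}$ directly via Lemma \ref{locall24}(i), while the Hölder piece yields an integrable singularity at $r=s$ thanks to $\beta^\prime>2\alpha-1$, which follows from {\bf (H1)}. The remaining $\|D_{q-}^{1-\alpha}\dD_{q-}^{1-\alpha}v(\cdot,q)[r]\|$ factor is directly controlled by (\ref{eq26}), and (\ref{localeq4}) then closes the bound. For $I_1$ the compensated derivative $\hat D_{s+}^{\alpha}(\omega_S(\cdot,t)\tilde E(u(\cdot)-u(s),\cdot))[r]$ is analysed term by term: smoothness of $\tilde E$ in its first slot together with the $\beta$-Hölder behaviour of $u$ controls the differences of $\tilde E(u(\cdot)-u(s),\cdot)$, and the Hölder regularity of $\omega_S$ is again supplied by Lemma \ref{locall24}(i). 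Pairing this with the $|D_{q-}^{1-\alpha}\omega_{q-}[r]|$ bound of Lemma \ref{locall22} and one more application of (\ref{localeq4}) closes the bound for $I_1$.

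The main obstacle is $I_2$, which requires an analogue of (\ref{eq26}) for the object $D_{q-}^{1-\alpha}\dD_{q-}^{1-\alpha}(\omega_S(t)\otimes\omega)[r]$, where the tensor is itself defined by the three-term decomposition (\ref{oS}). I would treat each summand of (\ref{oS}) separately: the first via the Hölder bound on $\omega_S(\tau,t)$ composed with a Young-type estimate (in the spirit of Lemma \ref{locall8}) for $\int_s^\tau(S(\tau-r)-{\rm id})E\,d\omega(r)$; the second simply absorbs $\|(\omega\otimes_S\omega)\|_{2\beta^\prime}$ from hypothesis {\bf (H3)}; and the third is the rank-one product $(\omega_S(\tau,t)-\omega_S(s,t))E(\omega(\tau)-\omega(s))$, handled by Lemma \ref{locall24}(i) and the $\beta^\prime$-Hölder regularity of $\omega$. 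Combining these yields an increment estimate of order $(t-s)^{\beta^\prime}(\tau-s)^{\beta^\prime}$ in $\tau$ for $\tilde E\mapsto E(\omega_S(t)\otimes\omega)(s,\tau)$, which after the double fractional differentiation produces a kernel with the $(r-s)^{-\beta}(q-r)^{\beta+\beta^\prime+2\alpha-2}$ structure needed. Pairing with the $D_{s+}^{2\alpha-1}\tilde E(u(\cdot)-u(s),\cdot)[r]$ bound and integrating via (\ref{localeq4}) gives the stated estimate. The quadratic dependence $(1+|||\omega|||_{\beta^{\prime\prime}}^2)$ emerges because the first summand in (\ref{oS}) multiplies $\omega_S$ against another noise-driven integral, so that $|||\omega|||_{\beta^{\prime\prime}}$ enters twice; the use of the strictly larger exponent $\beta^{\prime\prime}>\beta^\prime$ is precisely what is needed to keep $\alpha+\beta^{\prime\prime}-1$ in the integrable range for the $D_{q-}^{1-\alpha}$ derivatives of $\omega$.
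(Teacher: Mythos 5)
Your proposal follows essentially the same route as the paper's Appendix proof: the same three-way splitting of \eqref{wholder}, the same pairing of operator-norm bounds on the left-hand fractional derivatives (Lemma \ref{locall24}) with Hilbert--Schmidt bounds on the right-hand ones (Lemmas \ref{locall3} and \ref{locall32}), and the same three-summand treatment of $(\omega_S(t)\otimes\omega)$ via \eqref{oS}, including the approximation through {\bf (H3)}. The only real deviation is in the first summand of \eqref{oS}, where the paper factors $(-A)^{\beta^\prime}$ onto $\omega_S(\tau,t)$ (third estimate of Lemma \ref{locall24}(i)) to obtain $(\tau-s)^{2\beta^\prime}$, while your plain Young bound yields only $(t-\tau)^{\beta^\prime}(\tau-s)^{\beta^\prime}$ --- which still closes, since $\alpha+\beta^\prime>1$ keeps the fractional derivatives integrable and the target estimate already carries a $(t-s)^{\beta^\prime}$ factor.
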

For the proof, see the Appendix.\\

We would like to point out that $w$ satisfies the generalized Chen--equality
\begin{align}\label{localeq9b}
\begin{split}
\tilde E &w(t,s,r)+\tilde E w(t,r,q)-\tilde E(u(r)-u(s),\cdot)(\omega\otimes_S\omega)(r,q) \\
    &=\tilde E w(t,s,q)+\omega_S(q,t)S_\omega(q,r)\tilde E(u(r)-u(s),\cdot),
    \end{split}
\end{align}
for $0 < s\le r \leq q\le  t\le T$, and the Chen--equality
\begin{align}\label{localeq10b}
\begin{split}
\tilde Ew(t,s,r)+\tilde Ew(t,r,t)
    -\tilde E(u(r)-u(s),\cdot)(\omega\otimes_S\omega)(r,t)
    =\tilde Ew(t,s,t)
    \end{split}
\end{align}
where the latter one is obtained from (\ref{localeq9b}) taking $q=t$. In order to prove these two properties we only need to follow an approximation argument, considering $(\omega^n,(\omega^n\otimes_S\omega^n))$ satisfying {\bf (H3)}, and therefore converging to $(\omega,(\omega\otimes_S\omega))$ in $C_{\beta^\prime}([0,T], V)\times C_{2\beta^\prime}(\bar \Delta_{0,T};L_2(L_2(V,\hat V), V\otimes V))$ and take into account that the approximating elements $w^n:=(u\otimes (\omega^n\otimes_S \omega^n))$ given in Lemma \ref{uomom} satisfy the Chen--equalities (\ref{localeq9}) and (\ref{localeq10}). In particular, the terms $(\omega^n_S \otimes \omega^n)$ converge to the corresponding term $(\omega_S \otimes \omega)$, see the proof of Lemma \ref{locall9} in the Appendix section.\\

The following result gives estimates for the fractional derivatives of $v$ and $w$.

\begin{lemma}\label{locall3}
Let $U=(u,v)\in {\hat W_{0,T}}$. Then for $0<r<t\le T$ we have
\begin{align*}
  &\|D_{t-}^{1-\alpha}\dD_{t-}^{1-\alpha}v[r]\|_{\beta+\beta^\prime,\sim}\le c ||U||_W r^{-\beta}(t-r)^{2\alpha+\beta+\beta^\prime-2},\\
  & \|D_{t-}^{1-\alpha}\dD_{t-}^{1-\alpha}w(t,\cdot,\cdot)[r]\|\le c ||U||_Wr^{-\beta}(t-r)^{2\alpha+\beta+2\beta^\prime-2},
\end{align*}
where the first constant $c$ depends on $|||\omega|||_{\beta^{\prime\prime}}$, and the second one on $|||\omega|||_{\beta^{\prime\prime}}$ and $\|(\omega\otimes_S\omega)\|_{2\beta^\prime}$.
\end{lemma}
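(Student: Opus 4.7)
The plan is to apply the definition of the left-hand fractional derivative
$$D_{t-}^{1-\alpha} g[r] = \frac{(-1)^{1-\alpha}}{\Gamma(\alpha)}\bigg(\frac{g(r)}{(t-r)^{1-\alpha}} + (1-\alpha)\int_r^t \frac{g(r)-g(q)}{(q-r)^{2-\alpha}}dq\bigg)$$
to $g(\cdot)=\dD_{t-}^{1-\alpha}v[\cdot]$ for the first inequality and to $g(\cdot)=\dD_{t-}^{1-\alpha}w(t,\cdot,\cdot)[\cdot]$ for the second. No boundary correction is needed because both $g$'s vanish as $r\uparrow t$ by the pointwise bounds below. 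Each estimate then reduces to two ingredients: a pointwise bound on $g(r)$ and a H\"older-type increment bound on $g(q)-g(r)$; substituting these into the outer definition and evaluating the resulting integrals by the Beta-function identity (\ref{localeq4}) will yield the claimed exponents.

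For the first inequality I would obtain the pointwise bound by inserting $\|v(r,q)\|\le \|v\|_{\beta+\beta^\prime,\sim}r^{-\beta}(q-r)^{\beta+\beta^\prime}$ directly into the definition of $\dD_{t-}^{1-\alpha}v[r]$, which after (\ref{localeq4}) gives a factor $r^{-\beta}(t-r)^{\alpha+\beta+\beta^\prime-1}$. The increment bound is exactly estimate (\ref{localeq21}) of Lemma \ref{locall22}, so no extra work is required here. Integrating both contributions via (\ref{localeq4}) then produces the exponent $2\alpha+\beta+\beta^\prime-2$ in $(t-r)$ and the factor $r^{-\beta}\|U\|_W$; convergence of the inner integral is guaranteed by $\alpha+\beta+\beta^\prime>1$, which is a consequence of \textbf{(H1)}.

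For the second inequality the pointwise bound is supplied by Lemma \ref{locall9}: inserting $\|w(t,r,q)\|\le c\|U\|_W r^{-\beta}(q-r)^{\beta+\beta^\prime}(t-r)^{\beta^\prime}$ into the definition of $\dD_{t-}^{1-\alpha}w(t,\cdot,\cdot)[r]$ yields a factor $r^{-\beta}(t-r)^{\alpha+\beta+2\beta^\prime-1}$, one extra power of $\beta^\prime$ larger than in the $v$-case, as expected since $w$ sits one rung higher on the tensor scale. The delicate step, which I expect to be the main obstacle, is the increment bound on $\dD_{t-}^{1-\alpha}w(t,\cdot,\cdot)[q]-\dD_{t-}^{1-\alpha}w(t,\cdot,\cdot)[r]$. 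I would derive it by using the generalised Chen-equality (\ref{localeq9b}) to rewrite $w(t,r,\xi)$ in terms of $w(t,q,\xi)$, $w(t,r,q)$, an $(\omega\otimes_S\omega)$-term, and an $\omega_S S_\omega$-cross-term; the algebra then parallels the derivation of (\ref{localas}) and each of the resulting pieces is controlled by Lemma \ref{locall9} (for $w$), Lemma \ref{locall24} (for $\omega_S$ and $S_\omega$), and the regularity $\|(\omega\otimes_S\omega)(r,q)\|\le \|(\omega\otimes_S\omega)\|_{2\beta^\prime}(q-r)^{2\beta^\prime}$. The bookkeeping one must verify is that every piece produces the \emph{same} H\"older exponent $\alpha+\beta+2\beta^\prime-1$ in $(q-r)$ and the same singularity $r^{-\beta}$, so that the estimates combine cleanly in the outer integration.

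Feeding both the pointwise and the increment estimates into the outer $D_{t-}^{1-\alpha}$-definition and computing the resulting integrals with (\ref{localeq4}) will then produce the exponent $2\alpha+\beta+2\beta^\prime-2$ in $(t-r)$ demanded by the lemma. Integrability of the outer integrand near $r$ follows from the condition $\alpha<(\beta+1)/2$ in \textbf{(H1)}, which guarantees that all exponents arising inside (\ref{localeq4}) stay strictly greater than $-1$.
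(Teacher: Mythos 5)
Your proposal follows essentially the same route as the paper's proof: expand the outer $D_{t-}^{1-\alpha}$ into a pointwise term and an increment integral, bound the pointwise term by inserting the norm bound on $v$ (resp.\ Lemma \ref{locall9} for $w$) into the definition of $\dD_{t-}^{1-\alpha}$, and control the increments via the Chen--equalities \eqref{localeq10b} and \eqref{localeq9b} together with Lemma \ref{locall24}, closing with \eqref{localeq4}. Two bookkeeping points should be corrected. First, the increment of $\dD_{t-}^{1-\alpha}w(t,\cdot,\cdot)$ does \emph{not} come out as a clean power $(q-r)^{\alpha+\beta+2\beta^\prime-1}$: the individual pieces (e.g.\ the kernel difference handled via $y^{1-\alpha}-x^{1-\alpha}\le (y-x)^\beta x^{1-\alpha-\beta}$) only yield mixed bounds of the form $r^{-\beta}(\theta-r)^{\beta}(t-r)^{2\beta^\prime+\alpha-1}$; this is weaker than your claim but still suffices, since the outer integral then converges because $\alpha+\beta>1$ and produces the stated exponent $2\alpha+\beta+2\beta^\prime-2$. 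Second, integrability of the outer integrand near $\theta=r$ is guaranteed by the lower bound $\alpha>1-\beta$ in {\bf (H1)} (giving $\beta+\alpha-2>-1$), not by $\alpha<(\beta+1)/2$, which is needed elsewhere to control the $D_{s+}^{2\alpha-1}$ derivatives. With these adjustments the argument is the paper's.
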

We have also shifted the proof of this result to the Appendix section.
\medskip

\begin{lemma}\label{locall20}
Assume that $u_0 \in V$. There exists positive constants $\tilde c$ and $c$ such that for $U\in {\hat W_{0,T}}$
\begin{align*}
  \|\tT_2(U,\omega, & (\omega\otimes_S\omega),u_0)\|_{\beta+\beta^\prime,\sim}\le  \tilde c |u_0|+
  cT^{\beta^\prime}(1+\|U\|_W^2)
  \end{align*}
and, in addition, for two elements $U^1,\,U^2\in {\hat W_{0,T}}$:
\begin{align*}
  \|\tT_2 &(U^1,\omega,(\omega\otimes_S\omega),u_0)-\tT_2(U^2,\omega,(\omega\otimes_S\omega),u_0)\|_{\beta+\beta^\prime,\sim}\\
  \le & \tilde c|u_0^1-u_0^2| +cT^{\beta^\prime}(1+\|U^1\|_W^2+\|U^2\|_W^2)\|U^1-U^2\|_W.
\end{align*}
The constant $c$ depends on $|||\omega|||_{\beta^{\prime\prime}}$, $|||\omega|||_{\beta^{\prime}}$ and $\|(\omega\otimes_S\omega)\|_{2\beta^\prime}$, while $\tilde c$ on $|||\omega|||_{\beta^{\prime}}$.
\end{lemma}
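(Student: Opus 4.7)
The plan is to decompose $\tT_2(U,\omega,(\omega\otimes_S\omega),u_0)(s,t)=\Sigma_1(s,t)+\Sigma_2(s,t)+\Sigma_3(s,t)$ following the three integrals in the definition of $\tT_2$, and to bound the quantity $s^\beta\|\Sigma_j(s,t)\|/(t-s)^{\beta+\beta^\prime}$ uniformly in $0<s<t\le T$. The ingredients are: explicit estimates on the fractional derivatives of the integrands (from Lemma \ref{locall1}, the obvious analogue of Lemma \ref{locall22} at order $2\alpha-1$, and Lemma \ref{locall3}); the Lemma \ref{locall22}-type bounds on $D_{t-}^{1-\alpha}\omega$, on $D_{t-}^{1-\alpha}(\omega\otimes_S\omega)(\cdot,t)_{t-}$, and on $D_{t-}^{1-\alpha}\dD_{t-}^{1-\alpha}w$; and the Beta integral identity \eqref{localeq4}.

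For $\Sigma_1$, the integrand $\xi\mapsto (S(\xi-s)-{\rm id})u(s)$ vanishes at $\xi=s$. Using \eqref{eq2} for the pointwise summand of $D^\alpha_{s+}$ together with the factorisation $S(\xi-s)-S(q-s)=S(q-s)(S(\xi-q)-{\rm id})$ and the insertion $(-A)^\eta(-A)^{-\eta}$ (for some $\eta\in(\alpha,1)$) in the remainder, I would derive $|D^\alpha_{s+}((S(\cdot-s)-{\rm id})u(s))[\xi]|\le c|u(s)|_{V_\beta}(\xi-s)^{\beta-\alpha}$. Pairing with the Lemma \ref{locall22} bound on $D_{t-}^{1-\alpha}\omega$ and integrating by \eqref{localeq4} yields $\|\Sigma_1(s,t)\|\le c|||\omega|||_{\beta^\prime}|u(s)|_{V_\beta}(t-s)^{\beta+\beta^\prime}$. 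The $\tilde c|u_0|$ contribution then emerges via the $V_\beta$-regularity of $u(s)$ for $s>0$ inherited by elements of $\hat W_{0,T}$ (limits of smooth approximations with $u^n(0)\in D(-A)$ whose drift part lives in $V_\beta$ by Corollary \ref{localcoro1}), giving an estimate of the form $s^\beta|u(s)|_{V_\beta}\le c(|u_0|+T^{\beta^\prime}(1+\|U\|_W^2))$ that cleanly separates the $T$-independent $\tilde c|u_0|$ part from the $T^{\beta^\prime}$ part.

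For $\Sigma_2$, Lemma \ref{locall1} gives $|\hat D^\alpha_{s+}G(u(\cdot))[r]|\le c(1+\|u\|_{\beta,\sim}^2)(r-s)^{-\alpha}$, while an argument parallel to Lemma \ref{locall22} yields $\|D_{t-}^{1-\alpha}(\omega\otimes_S\omega)(\cdot,t)_{t-}[r]\|\le c\|(\omega\otimes_S\omega)\|_{2\beta^\prime}(t-r)^{\alpha+2\beta^\prime-1}$; applying \eqref{localeq4} in $r$ produces a factor $(t-s)^{2\beta^\prime}$, and since $\beta^\prime\ge\beta$ the resulting $\sim$-norm contribution is $cT^{\beta^\prime}(1+\|U\|_W^2)$. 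For $\Sigma_3$, the analogue of Lemma \ref{locall22} at order $2\alpha-1$ (valid since $\beta>2\alpha-1$ by \textbf{(H1)}) combined with the Lipschitz bound on $DG$ from Lemma \ref{locall2} gives $\|D^{2\alpha-1}_{s+}DG(u(\cdot))[r]\|\le c(1+\|u\|_{\beta,\sim})(r-s)^{-(2\alpha-1)}$, and Lemma \ref{locall3} controls $\|D_{t-}^{1-\alpha}\dD_{t-}^{1-\alpha}w(t,\cdot,\cdot)[r]\|$; using $r^{-\beta}\le s^{-\beta}$ and \eqref{localeq4} produces $\|\Sigma_3(s,t)\|\le c\|U\|_W s^{-\beta}(t-s)^{\beta+2\beta^\prime}$, again a $cT^{\beta^\prime}(1+\|U\|_W^2)$ $\sim$-norm contribution. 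The Lipschitz estimate is obtained by repeating the same three-term decomposition with $U^1$ and $U^2$ and subtracting, invoking now the sharper double-difference inequalities of Lemma \ref{locall2} (for $G$, $DG$, the compensated derivative, and $D^2G$) together with the corresponding difference estimates for $w^1-w^2$, which follow by re-running the proof of Lemma \ref{locall9} applied to $U^1-U^2$.

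The main technical obstacle is the treatment of $\Sigma_1$: producing simultaneously the $(t-s)^{\beta+\beta^\prime}$ order and the $s^{-\beta}$ weight required by the $\sim$-norm forces one to upgrade $u(s)\in V$ to $u(s)\in V_\beta$ by the smoothing $\|S(s)\|_{L(V,V_\beta)}\le cs^{-\beta}$, and to carry out an accurate bookkeeping so that the $\tilde c|u_0|$ piece (which must survive as $T\to 0$) is cleanly isolated from the $cT^{\beta^\prime}(1+\|U\|_W^2)$ piece; the remaining terms $\Sigma_2$ and $\Sigma_3$ are then essentially routine combinations of the lemmas listed above with \eqref{localeq4}.
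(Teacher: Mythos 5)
Your decomposition into three terms and your treatment of $\Sigma_2$ and $\Sigma_3$ match the paper's proof (which calls the three addends $B_1,B_2,B_3$, reduces $B_2,B_3$ to the estimates of $C_{11},C_{12}$ from Lemma \ref{locall19b}, and uses the $2\beta^\prime$-H\"older continuity of $(\omega\otimes_S\omega)$ together with Lemma \ref{locall3}); the Lipschitz part is likewise handled by the paper only in outline, as in your sketch. The difference --- and the genuine gap --- is in $\Sigma_1$. You reduce everything to the bound $s^\beta|u(s)|_{V_\beta}\le c\big(|u_0|+T^{\beta^\prime}(1+\|U\|_W^2)\big)$, claimed to be ``inherited'' by elements of $\hat W_{0,T}$. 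This does not follow: the norm of $\hat W_{0,T}$, i.e.\ that of $C_{\beta,\sim}([0,T];V)\times C_{\beta+\beta^\prime,\sim}(\Delta_{0,T};V\otimes V)$, carries no $V_\beta$-information, $V_\beta$-regularity of $u^n(s)$ is not preserved under limits in that topology, and the initial datum $u_0$ is not even coupled to a generic $U\in\hat W_{0,T}$ --- so there is no way to extract the $T$-independent $\tilde c|u_0|$ term from $\|U\|_W$ alone. Worse, without a quantitative $V_\beta$-bound on $u(s)$ your starting estimate $|D^\alpha_{s+}((S(\cdot-s)-{\rm id})u(s))[\xi]|\le c|u(s)|_{V_\beta}(\xi-s)^{\beta-\alpha}$ is not available for a general $u(s)\in V$.

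The paper closes exactly this hole by a structural substitution you do not make: it replaces $u(s)$ by $S(s)u_0+\int_0^sS(s-r)G(u(r))\,d\omega(r)$ and splits $B_1=B_{11}+B_{12}$ accordingly. For $B_{11}$ the operand becomes $(S(\xi)-S(s))u_0$, and analyticity of $S$ measured from time $0$ (not from time $s$) gives $|D^\alpha_{s+}((S(\cdot)-S(s))u_0)[\xi]|\le c(\xi-s)^{\beta-\alpha}s^{-\beta}|u_0|$ using only $u_0\in V$; this is the source of both the weight $s^{-\beta}$ and the $T$-independent $\tilde c|u_0|$ contribution. For $B_{12}$ the needed extra spatial regularity is supplied by Corollary \ref{localcoro1} applied to the integral term, which produces the factor $s^{\beta^\prime-\beta}$ and hence the $cT^{\beta^\prime}(1+\|U\|_W^2)$ contribution. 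To repair your argument you must either perform this same substitution (i.e.\ use the mild structure of $u$) or add, and prove, the quantitative $V_\beta$-estimate of $u(s)$ you invoke; as written that step is unjustified.
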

\begin{proof}
Let us denote ${\mathcal T}_2(U)(s,t)=: B_1(s,t)+B_2(s,t)+B_3(s,t)$,
corresponding to the three different addends of $\tT_2.$

For $B_1$ we can consider the following splitting:
\begin{align*}
B_1(s,t)=&\int_s^t (S(\xi-s)-{\rm id})u(s)\otimes_V d\omega(\xi)\\
=&\int_s^t (S(\xi)-S(s))u_0 \otimes_V d\omega(\xi)\\
&+\int_s^t
(S(\xi-s)-{\rm id})\bigg(\int_0^s S(s-r)G(u(r))d\omega (r)
\bigg)\otimes_V d\omega(\xi)
\\=:&B_{11}(s,t)+B_{12}(s,t).
\end{align*}
$B_1$ can be interpreted in the fractional  sense thanks to the regularity of its integrand, which means that
\begin{align*}
B_{11}(s,t)& =(-1)^\alpha \int_s^t
D^\alpha_{s+}((S(\cdot)-S(s))u_0)[\xi]\otimes_VD^{1-\alpha}_{t-}\omega_{t-}[\xi]d\xi.
\end{align*}
For $\alpha<\alpha^\prime$ where $\alpha^\prime$ is sufficiently close to $\alpha$ and $s>0$, applying \eqref{eq1} and
\eqref{eq2},
\begin{align*}
    |D^\alpha_{s+}((S(\cdot)&-S(s))u_0)[\xi]|\le c\bigg(\frac{|(S(\xi)-S(s))u_0|}{(\xi-s)^{\alpha}}+\int_s^\xi
    \frac{|(S(\xi)-S(q))u_0|}{(\xi-q)^{1+\alpha}}dq\bigg)\\
    &\le c\bigg(\frac{(\xi-s)^{\beta-\alpha}}{s^\beta}
    +\int_s^\xi\frac{(\xi-q)^{\alpha^\prime}q^{-\beta}}{(\xi-q)^{1+\alpha}q^{\alpha^\prime-\beta}}dq\bigg)|u_0|\\
    &\le c\bigg(\frac{(\xi-s)^{\beta-\alpha}}{s^\beta}
    +\frac{1}{s^\beta}\int_s^\xi\frac{(\xi-q)^{\alpha^\prime}}{(\xi-q)^{1+\alpha}(q-s)^{\alpha^\prime-\beta}}dq\bigg)|u_0|\le c\frac{(\xi-s)^{\beta-\alpha}}{s^\beta}|u_0|,
\end{align*}
hence, for $s>0$,
\begin{align*}
|B_{11}(s,t)|&  \leq c \frac{|||\omega|||_{\beta^\prime}
|u_0|}{s^\beta} \int_s^t
(\xi-s)^{\beta-\alpha}(t-\xi)^{\beta^\prime+\alpha-1} d\xi\leq  c |||\omega|||_{\beta^\prime}
|u_0| s^{-\beta} (t-s)^{\beta^\prime+\beta},
\end{align*}
which implies $\|B_{11}\|_{\beta+\beta^\prime,\sim}  \leq \tilde c |u_0|$. Moreover, note that
\begin{align*}
\begin{split}
\bigg|D_{s+}^\alpha &\bigg((S(\cdot-s)-{\rm id})\int_0^s
S(s-r)G(u(r))d\omega
(r)\bigg)[\xi]\bigg| \\
\leq &c \frac{|S(\xi-s)-{\rm id}) \int_0^s S(s-r)G(u(r))d\omega
(r)| }{(\xi-s)^\alpha}\\ & + \int_s^\xi \frac{|\int_0^s
S(\xi-r)-S(q-r))G(u(r))d\omega(r)|}{(\xi-q)^{1+\alpha}}dq\bigg).
\end{split}
\end{align*}
To deal with $B_{12}$ on account of Corollary \ref{localcoro1} and thanks to the fact that $\beta^\prime>\beta$ we have
\begin{align*}
\int_s^\xi& \frac{|\int_0^s (S(\xi-q)-{\rm id})S(q-r)G(u(r))d\omega(r)|}{(\xi-q)^{1+\alpha}}dq \\
&\leq  \int_s^\xi \frac{(\xi-q)^{\alpha^\prime}}{(q-s)^{\alpha^\prime-\beta}(\xi-q)^{1+\alpha}}\bigg|(-A)^\beta\int_0^s S(s-r)G(u(r))d\omega(r)\bigg|dq\\
&\leq c (1+|||\omega|||_{\beta^\prime})(1+\|U\|_W^2)
(\xi-s)^{\beta-\alpha}s^{\beta^\prime-\beta}
\end{align*}
and by \eqref{localeq4}
\begin{align*}
\|B_{12}\|_{\beta+\beta^\prime,\sim}\leq
cT^{\beta^\prime}|||\omega|||_{\beta^\prime}^2  (1+\|U\|_W^2).
\end{align*}
Finally, a similar estimate follows for $B_2$ and $B_3$. In order to see this, note that $B_2$ and $B_3$ can be considered in a similar way to $C_{11}$ and $C_{12}$ in the proof of Lemma \ref{locall19b}, with the difference that now we have to estimate $D_{t-}^{1-\alpha}(\omega\otimes_S\omega)(\cdot, t)_{t-}[r]$ and
$D_{t-}^{1-\alpha}
\dD_{t-}^{1-\alpha}w(t,\cdot,\cdot)[r]$, for which we use the $2\beta^\prime$-H\"older continuity of $(\omega\otimes_S\omega)$ together with Lemma \ref{locall3}, arriving at
\begin{align*}
&\|B_{2}\|_{\beta+\beta^\prime,\sim}\leq
cT^{\beta^\prime}\|(\omega\otimes_S\omega)\|_{2\beta^\prime} (1+\|U\|_W^2),\\
&\|B_{3}\|_{\beta+\beta^\prime,\sim}\leq
cT^{\beta^\prime} (1+|||\omega|||_{\beta^{\prime\prime}}^2)\|(\omega\otimes_S\omega)\|_{2\beta^\prime}(1+\|U\|_W^2).
\end{align*}

The second part of this lemma can be proven similarly and thus we omit it here.
\end{proof}

\medskip

Now we approximate $\tT(U,\omega,(\omega\otimes_S\omega),u_0)$ by piecewise linear noise.

\begin{lemma}\label{locall5}
Let $U\in {\hat W_{0,T}}$ and assume that $(\omega^n,(\omega^n\otimes_S\omega^n))$ satisfies {\bf(H3)}. Then
\begin{equation*}
  \lim_{n\to\infty}\|\tT(U,\omega^n,(\omega^n\otimes_S\omega^n),u_0)-\tT(U,\omega,(\omega\otimes_S\omega),u_0)\|_W=0.
\end{equation*}
In addition, $\tT(U,\omega,(\omega\otimes_S\omega),u_0)\in \hat W_{0,T}$.
\end{lemma}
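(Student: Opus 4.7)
Overall, I would split the proof into the convergence statement and the membership in $\hat W_{0,T}$. For the convergence, the strategy is to rerun the estimates of Lemmas~\ref{locall19b}, \ref{locall20}, \ref{locall9}, and \ref{locall3} with the noise and its iterated integral replaced by their perturbations $\omega^n-\omega$ and $(\omega^n\otimes_S\omega^n)-(\omega\otimes_S\omega)$, using {\bf (H3)} and Corollary~\ref{coro1} to conclude that these perturbations vanish in the respective $C_{\beta^\prime}$ and $C_{2\beta^\prime}$ norms, while all the other quantities (the fixed $U$ and $u_0$, the bounds $|||\omega|||_{\beta^{\prime\prime}}$ and $\|(\omega\otimes_S\omega)\|_{2\beta^\prime}$, etc.) remain uniformly bounded in $n$.

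For $\tT_1$, the $v$-integral does not involve $\omega$ and so cancels in the difference $\tT_1(U,\omega^n,u_0)-\tT_1(U,\omega,u_0)$; only the first integral contributes. Replaying the argument of Lemma~\ref{locall19b} with $\omega$ swapped for $\omega^n-\omega$ bounds the $\|\cdot\|_{\beta,\sim}$-norm of the difference by a multiple of $T^{\beta^\prime}(1+\|u\|_{\beta,\sim}^2)|||\omega^n-\omega|||_{\beta^\prime}\to 0$. For $\tT_2$, write $\tT_2=B_1+B_2+B_3$ as in Lemma~\ref{locall20}. The differences $B_1^n-B_1$ and $B_2^n-B_2$ are linear perturbations in $\omega$ and $(\omega\otimes_S\omega)$ respectively, hence handled in the same way and they vanish by {\bf (H3)}. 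The genuine difficulty is $B_3^n-B_3$, which requires showing $w^n\to w$ in the norm of Lemma~\ref{locall9}. I would decompose $w^n-w$ across the three integrals in \eqref{wholder} and track separately the convergences $\omega_S^n\to\omega_S$ and $S_{\omega^n}\to S_\omega$ (Corollary~\ref{coro1}), $(\omega^n\otimes_S\omega^n)\to(\omega\otimes_S\omega)$ (from {\bf (H3)}), and $(\omega_S^n\otimes\omega^n)\to(\omega_S\otimes\omega)$ (which follows by combining the previous ones via formula \eqref{oS}). Substituting the resulting bound for $\|w^n-w\|$ into the estimates of $B_3$ then yields $\|B_3^n-B_3\|_{\beta+\beta^\prime,\sim}\to 0$.

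For the ``in addition'' claim, pick an approximating sequence $(u^k,u^k\otimes\omega^k)\to(u,v)$ in $\hat W_{0,T}$ with $u^k\in C_\gamma$, $u^k(0)\in D(-A)$, and smooth $\omega^k$ satisfying {\bf (H3)}, and choose $u_0^k\in D(-A)$ converging to $u_0$ in $V$. For smooth $\omega^k$, the derivation in the ``smooth paths'' subsection of Section~\ref{s4} gives
\[
\tT((u^k,u^k\otimes\omega^k),\omega^k,(\omega^k\otimes_S\omega^k),u_0^k)=(\tilde u^k,\,\tilde u^k\otimes\omega^k),
\]
where $\tilde u^k(t)=S(t)u_0^k+\int_0^t S(t-r)G(u^k(r))d\omega^k(r)$ is the classical mild solution, belongs to $C_\gamma([0,T];V)$, and satisfies $\tilde u^k(0)=u_0^k\in D(-A)$. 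Hence each $(\tilde u^k,\tilde u^k\otimes\omega^k)$ lies in the generating set \eqref{localeq12}. Applying the triangle inequality with the first part of the present lemma (continuity in the noise, for fixed $U,u_0$) together with Lemma~\ref{locall23} and the Lipschitz part of Lemma~\ref{locall20} (continuity in $U$ and $u_0$), one obtains $(\tilde u^k,\tilde u^k\otimes\omega^k)\to\tT(U,\omega,(\omega\otimes_S\omega),u_0)$ in $\|\cdot\|_W$, placing the limit in $\hat W_{0,T}$.

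The main obstacle is the convergence of $w^n$ to $w$: since $w$ is built from four $\omega$-dependent objects (namely $\omega$, $\omega_S$, $(\omega\otimes_S\omega)$, and $(\omega_S\otimes\omega)$) intertwined with the fixed $v$ through nested fractional derivatives, each perturbation must be tracked termwise, keeping the singular factors produced by the fractional calculus uniformly controlled in $n$ so as to sum into a vanishing bound in the $\beta+\beta^\prime,\sim$ norm.
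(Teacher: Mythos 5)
Your proposal is correct and follows essentially the same route as the paper: the convergence part exploits that every occurrence of $\omega$ and $(\omega\otimes_S\omega)$ (and of the derived objects $\omega_S$, $S_\omega$, $(\omega_S\otimes\omega)$, $w$) enters $\tT$ linearly or bilinearly, so the estimates of Lemmas~\ref{locall19b}, \ref{locall20}, \ref{locall9} rerun with the perturbations $\omega^n-\omega$ and $(\omega^n\otimes_S\omega^n)-(\omega\otimes_S\omega)$ yield vanishing bounds; and the membership part identifies $\tT$ applied to an approximating element of the generating set \eqref{localeq12} with $(\tilde u^k,\tilde u^k\otimes\omega^k)$, $\tilde u^k\in C_\gamma$, $\tilde u^k(0)\in D(-A)$, and passes to the limit via Lemmas~\ref{locall23} and \ref{locall20} together with the first part. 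Your write-up is in fact more detailed than the paper's own (very terse) argument, particularly in isolating $w^n\to w$ as the delicate step.
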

\begin{proof}
Note that if a term of $\tT$ contains $\omega$ or $(\omega\otimes_S\omega)$ then this term depends on these expressions linearly or bi-linearly (see the definition of $\tT_1,\,\tT_2$ together with \eqref{wholder}) which just gives the convergence conclusion.
To see the second part of the statement we take $U=(u,v)\in  \hat W_{0,T}$. For this element we choose an approximating sequence $(u^n,(u^n\otimes\omega^n))$ from \eqref{localeq12}. We note that $\tT_1((u^n,(u^n\otimes\omega^n)),\omega^n,u_0^n)\in C_{\gamma}([0,T],V)$
and $\tT_1((u^n,(u^n\otimes\omega^n)),\omega^n,u_0^n)(0)\in D((-A))$ for any $\gamma\in (0,1)$, see Pazy \cite{Pazy} Theorem 4.3.1 and \eqref{eq1}-(\ref{eq2}). Therefore $\tT_2((u^n,(u^n\otimes\omega^n)),\omega^n,(\omega^n\otimes_S \omega^n),u_0^n)$ can be defined as $(\tT_1((u^n,(u^n,\omega^n)),\omega^n, u_0^n)\otimes \omega^n)     $ given by (\ref{localeq1}). By definition of the space $\hat W_{0,T}$, by Lemmas \ref{locall23}, \ref{locall20}, and the first part of this lemma we have that that $\tT(U,\omega,(\omega\otimes_S\omega),u_0)\in \hat W_{0,T}$.
\end{proof}

Let us now prove the uniqueness of the path-area-solution in$\hat W_{0,T}$ if such a solution exists.

\begin{theorem}
Suppose that $U^1=(u^1,v^1),\,U^2=(u^2,v^2)\in \hat W_{0,T}$ are two path-area solutions related to the initial condition $u_0\in V$. Then we have $U^1=U^2$.
\end{theorem}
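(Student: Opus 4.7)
My plan is to combine the two quantitative contraction estimates from Lemmas \ref{locall23} and \ref{locall20} with a short-time/restart argument. Since $U^1$ and $U^2$ share the initial condition $u_0$, the two Lipschitz bounds together yield, for any subinterval $[0,T_0]\subset [0,T]$,
\begin{equation*}
   \|\tT(U^1,\omega,(\omega\otimes_S\omega),u_0)-\tT(U^2,\omega,(\omega\otimes_S\omega),u_0)\|_{W_{0,T_0}}
   \le c\,T_0^{\beta'}\bigl(1+\|U^1\|_{W_{0,T_0}}^2+\|U^2\|_{W_{0,T_0}}^2\bigr)\|U^1-U^2\|_{W_{0,T_0}}.
\end{equation*}
Set $R:=\|U^1\|_{W_{0,T}}+\|U^2\|_{W_{0,T}}$, which is finite since $U^i\in\hat W_{0,T}$, and pick $T_1\in (0,T]$ with $c\,T_1^{\beta'}(1+2R^2)\le 1/2$. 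Because the $W_{0,T_1}$-norm of each $U^i$ is dominated by $R$, and both $U^i$ are fixed points of $\tT$, the estimate gives $\|U^1-U^2\|_{W_{0,T_1}}\le\tfrac12\|U^1-U^2\|_{W_{0,T_1}}$, whence $U^1\equiv U^2$ on $[0,T_1]$.

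To propagate uniqueness from $[0,T_1]$ to the whole $[0,T]$, I will restart the equation at $T_1$ using the translated noise $\omega^{(1)}(\cdot):=\omega(T_1+\cdot)-\omega(T_1)\in C_{\beta'}([0,T-T_1];V)$ and the correspondingly translated tensor $(\omega^{(1)}\otimes_S\omega^{(1)})$ (which inherits its H\"older regularity from $(\omega\otimes_S\omega)$). Since $U^1=U^2$ on $[0,T_1]$, both restrictions share the value $u_1:=u^1(T_1)=u^2(T_1)\in V$, and by the additivity statement in Lemma \ref{localln1} together with the Chen identity \eqref{chen} (and its analogues \eqref{localeq9b}, \eqref{localeq10b} for the second-order object $w$), the shifted restrictions
\begin{equation*}
\tilde U^i(\cdot):=\bigl(u^i(T_1+\cdot),\,v^i(T_1+\cdot,T_1+\cdot)\bigr),\qquad i=1,2,
\end{equation*}
will belong to $\hat W_{0,T-T_1}$ and satisfy $\tilde U^i=\tT(\tilde U^i,\omega^{(1)},(\omega^{(1)}\otimes_S\omega^{(1)}),u_1)$. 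Repeating the first paragraph's argument with these restarted solutions (whose $W$-norms are controlled by $R$ and the H\"older norms of $(\omega,(\omega\otimes_S\omega))$ on $[0,T]$) gives uniqueness on a further interval $[T_1,T_1+T_2]$ with $T_2$ bounded below by a constant $\delta>0$ depending only on these a priori quantities.

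After finitely many such steps $[0,T_1]$, $[T_1,T_1+T_2],\dots$, the union covers $[0,T]$, so $U^1=U^2$ on $[0,T]$. The principal technical point is the verification in the second paragraph: one must check that the time-translated triple $(\tilde U^i,\omega^{(1)},(\omega^{(1)}\otimes_S\omega^{(1)}))$ really reproduces the same fixed-point structure with the shifted initial datum $u_1$. The argument for the path component $\tT_1$ reduces to Lemma \ref{localln1}; for the area component $\tT_2$ it requires combining the Chen identity for $v$, the generalized Chen identity \eqref{localeq9b} for $w$, and the decomposition \eqref{oS} to re-express $v^i(T_1+s,T_1+t)$ in terms of the shifted data. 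This bookkeeping is routine but is where care is genuinely needed; once performed, the local contraction plus the uniform lower bound on step sizes closes the proof.
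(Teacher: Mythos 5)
Your proposal is correct and follows essentially the same route as the paper: both rest on the short-time contraction estimates of Lemmas \ref{locall23} and \ref{locall20} (with the factor $T_0^{\beta'}$) together with the observation, via Lemma \ref{localln1} and the Chen identities, that restrictions of a solution are solutions of the restarted problem with initial datum $u(T_1)$. The only difference is presentational --- the paper packages the continuation step as a contradiction from a maximal interval of uniqueness, whereas you iterate explicitly with a uniform lower bound on the step size --- and both treatments leave the restart bookkeeping for $\tT_2$ at the same level of detail.
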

\begin{proof}
Let $T_1\in [0,T)$ such that $[0,T_1]$ represents the maximal interval of uniqueness. Here $T_1=0$ means that two different solutions are just branching from $0$.
Note that the restrictions $\tilde U^i=(u_{[T_1,\tilde T]}^i,v_{\Delta_{[T_1,\tilde T]}}^i)$ are path-area solutions too with respect to the interval $[T_1,\tilde T],\,\tilde T \leq T$ with initial condition $u(T_1)$.
This follows because for $\tT_1$ we can apply Lemma \ref{localln1}. On the other hand, $\tT_2$ restricted to $T_1<s<t\le \tilde T$ keeps the same structure as the original $\tT_2$. Then, if the constant $C$ is an estimate of $\|\tilde U^i\|_{W_{T_1,\tilde T}}^2$, similar to the estimates that we obtained in Lemmas \ref{locall23} and \ref{locall20}, we obtain
\begin{equation*}
  0\not=\|\tilde U^1-\tilde U^2\|_{W_{T_1,\tilde T}}\le c(\tilde T-T_1)^{\beta^\prime}(1+2C)\|\tilde U^1-\tilde U^2\|_{{W_{T_1,\tilde T}}}
\end{equation*}
or equivalently
\begin{equation*}
  1\le c(\tilde T-T_1)^{\beta^\prime}(1+2C)
\end{equation*}
for any $\tilde T>T_1$, which is a contradiction if $\tilde T-T_1$ is sufficiently small.
\end{proof}

Now we present the main theorem of the paper.

\begin{theorem}\label{localt1}
Suppose that the standing conditions {\bf (H)} are satisfied and suppose that $T>0$ is chosen sufficiently small depending on the parameters of the problem. Then $\tT$ has a fixed point in $\hat W_{0,T}\subset W_{0,T}$ that defines a mild path-area solution to \eqref{localeq01} which is unique.
\end{theorem}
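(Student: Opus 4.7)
The plan is to apply the Banach fixed point theorem on a closed ball in the complete metric space $\hat W_{0,T}$, since essentially all the analytical work has already been carried out in Lemmas \ref{locall19b}, \ref{locall23}, \ref{locall20} and \ref{locall5}. The first task is to assemble the two standard ingredients: a self-map bound and a contraction estimate. Combining Lemmas \ref{locall19b} and \ref{locall20} gives constants $c_1$ depending on $|||\omega|||_{\beta^\prime}$, and $c_2$ depending on $|||\omega|||_{\beta^{\prime\prime}}$, $|||\omega|||_{\beta^\prime}$ and $\|(\omega\otimes_S\omega)\|_{2\beta^\prime}$, but both independent of $T\in (0,1]$ and of $U$, such that
\begin{equation*}
\|\tT(U,\omega,(\omega\otimes_S\omega),u_0)\|_W \le c_1|u_0| + c_2 T^{\beta^\prime}(1+\|U\|_W^2),
\end{equation*}
while Lemmas \ref{locall23} and \ref{locall20} together yield
\begin{equation*}
\|\tT(U^1)-\tT(U^2)\|_W \le c_3 T^{\beta^\prime}(1+\|U^1\|_W^2+\|U^2\|_W^2)\|U^1-U^2\|_W
\end{equation*}
for a further constant $c_3$ of the same type, and where $u_0^1=u_0^2=u_0$ is fixed.

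Next, I will choose the radius $R := 2c_1|u_0|+1$ and then select $T_0>0$ small enough that
\begin{equation*}
c_2 T_0^{\beta^\prime}(1+R^2) \le R/2 \quad\text{and}\quad c_3 T_0^{\beta^\prime}(1+2R^2) \le 1/2.
\end{equation*}
Such a $T_0$ exists because $\beta^\prime>0$ and the constants do not depend on $T$. For this $T_0$, consider the ball $B_R := \{U\in \hat W_{0,T_0}:\|U\|_W\le R\}$. By Lemma \ref{locall5}, $\tT(\cdot,\omega,(\omega\otimes_S\omega),u_0)$ maps $\hat W_{0,T_0}$ into itself; by the first estimate above it sends $B_R$ into $B_R$; and by the second it is a strict contraction on $B_R$ with Lipschitz constant at most $1/2$. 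Since $\hat W_{0,T_0}$ is a complete metric space (it was defined as a closed subspace of the Banach space $C_{\beta,\sim}([0,T_0];V)\times C_{\beta+\beta^\prime,\sim}(\Delta_{0,T_0};V\otimes V)$), so is $B_R$, and the Banach fixed point theorem produces a unique $U\in B_R$ with $\tT(U)=U$, which is by definition a mild path-area solution on $[0,T_0]$. Uniqueness in the whole of $\hat W_{0,T_0}$ (not just in $B_R$) is then a consequence of the preceding uniqueness theorem for path-area solutions.

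The substantive analytical obstacles, such as controlling the delicate weighted norms of $\tT_2$ and the generalized Chen-type object $w$, have already been overcome in the preceding lemmas; the remaining work is bookkeeping. The one point that deserves care is that the constants $c_1,c_2,c_3$ must be genuinely independent of $T$ for $T$ in a bounded interval, so that shrinking $T$ actually shrinks the right-hand sides; this is already built into the previous estimates through the weighted H\"older norms $\|\cdot\|_{\beta,\sim}$ and $\|\cdot\|_{\beta+\beta^\prime,\sim}$, which is precisely why the factor $T^{\beta^\prime}$ isolates in front of every nonlinear term.
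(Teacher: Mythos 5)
Your proposal is correct and follows essentially the same route as the paper: a Banach fixed point argument on a closed ball in $\hat W_{0,T}$, with Lemmas \ref{locall19b} and \ref{locall20} supplying the self-map bound, Lemmas \ref{locall23} and \ref{locall20} the contraction estimate, Lemma \ref{locall5} the fact that the image lies in $\hat W_{0,T}$, and the preceding uniqueness theorem upgrading uniqueness from the ball to all of $\hat W_{0,T}$. Your explicit choice of radius and $T_0$ merely fills in bookkeeping the paper leaves implicit.
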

\begin{proof}
Lemmas \ref{locall19b}, \ref{locall20} and \ref{locall5} prove that $\tT$ maps a closed centered ball from $\hat W_{0,T}$ into itself if $T>0$ is sufficiently small and
by Lemma \ref{locall23} and Lemma \ref{locall20} we obtain that this mapping is a contraction
if $T>0$ is chosen sufficiently small.
\end{proof}

\begin{corollary}
Let $U\in \hat W_{0,T}$ be a mild path-area solution given by Theorem \ref{localt1} and let $U^n$ be the path-area solution corresponding to the equations (\ref{localeq01}) and \eqref{eq3}
and driven by a smooth noise $\omega^n$. Then we have
\begin{equation*}
  \lim_{n\to\infty}\|U^n-U\|_W=0.
\end{equation*}
\end{corollary}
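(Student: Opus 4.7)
The plan is to view $U$ and each $U^n$ as fixed points of the operators $\tT(\cdot,\omega,(\omega\otimes_S\omega),u_0)$ and $\tT(\cdot,\omega^n,(\omega^n\otimes_S\omega^n),u_0)$, respectively, and to combine the uniform contraction in the $U$-variable (Lemmas \ref{locall23} and \ref{locall20}) with the joint continuity in the noise pair (Lemma \ref{locall5}). First I would pick some $\beta''\in(\beta',H)$; by Hypothesis {\bf (H3)} the sequence $\omega^n$ converges to $\omega$ in $C_{\beta''}([0,T];V)$ and $(\omega^n\otimes_S\omega^n)$ converges to $(\omega\otimes_S\omega)$ in $C_{2\beta'}(\bar\Delta_{0,T};L_2(L_2(V,\hat V),V\otimes V))$, so in particular
\[
M := \sup_n \big(|||\omega^n|||_{\beta''} + \|\omega^n\otimes_S\omega^n\|_{2\beta'}\big) < \infty.
\]

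Inspecting the proofs of Lemmas \ref{locall19b}, \ref{locall20}, \ref{locall23}, \ref{locall9} and \ref{locall3}, the constants that enter the ball--invariance and Lipschitz estimates depend on the noise pair only through $|||\cdot|||_{\beta''}$ and $\|\cdot\|_{2\beta'}$. Hence, possibly shrinking the $T$ given by Theorem \ref{localt1} to some $T_*>0$ depending only on $|u_0|$, $M$ and the structural constants of {\bf (H2)}, there exist a radius $R>0$ and $q\in(0,1)$ such that for every $n$ the operator $\tT(\cdot,\omega^n,(\omega^n\otimes_S\omega^n),u_0)$ sends the closed $R$-ball of $\hat W_{0,T_*}$ into itself and is a $q$-contraction there, with $q$ independent of $n$. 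In particular, the Banach fixed point theorem yields $U^n\in B_R$, and the same ball contains $U$.

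Next, from the fixed point identities and the triangle inequality,
\begin{align*}
\|U^n-U\|_W &\le \|\tT(U^n,\omega^n,(\omega^n\otimes_S\omega^n),u_0)-\tT(U,\omega^n,(\omega^n\otimes_S\omega^n),u_0)\|_W \\
&\quad + \|\tT(U,\omega^n,(\omega^n\otimes_S\omega^n),u_0)-\tT(U,\omega,(\omega\otimes_S\omega),u_0)\|_W.
\end{align*}
The first addend is bounded by $q\|U^n-U\|_W$ by the uniform contraction, while the second, which I call $\varepsilon_n$, tends to $0$ by Lemma \ref{locall5}. Rearranging gives $\|U^n-U\|_W\le(1-q)^{-1}\varepsilon_n\to 0$, settling the convergence on $[0,T_*]$. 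If $T_*<T$, I would propagate the result interval by interval: at time $T_*$ the convergence of $U^n$ to $U$ in $\hat W_{0,T_*}$ forces convergence of the new initial data in $V$ (by continuous embedding of $\|\cdot\|_{\beta,\sim}$ into sup-norm), and Lemma \ref{localln1} ensures that the restarted equations produce the correct restrictions of $U$ and $U^n$; a finite covering of $[0,T]$ by intervals of length at most $T_*$ concludes.

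The main obstacle is verifying the uniformity in $n$ of the radius $R$, the time $T_*$ and the contraction constant $q$; this boils down to checking that all the constants in Lemmas \ref{locall19b}--\ref{locall20} depend on the driving noise only through the two H\"older norms that are uniformly bounded by $M$, which is visible from the explicit estimates carried out in their proofs. Once this uniformity is in place, everything else is the standard continuity of a parametrised contraction on its parameters.
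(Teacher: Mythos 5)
Your proposal is correct and follows essentially the same route as the paper: establish a ball on which the operators $\tT(\cdot,\omega^n,(\omega^n\otimes_S\omega^n),u_0)$ are uniformly contractive, then combine the parametrised Banach fixed point theorem (your triangle-inequality rearrangement giving $\|U^n-U\|_W\le(1-q)^{-1}\varepsilon_n$) with the convergence of Lemma \ref{locall5}. The only cosmetic difference is that you obtain uniformity by shrinking $T$ using the uniform bound $M$ and then propagating over subintervals, whereas the paper keeps the $T$ of Theorem \ref{localt1} and invokes the uniform contraction only for $n$ large; both are valid.
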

In fact, for large enough $n$ we can find a ball in $\hat W_{0,T}$ which is mapped into itself by $\tT(\cdot,\omega^n,(\omega^n\otimes_S\omega^n),u_0)$ and
$\tT(\cdot,\omega,(\omega\otimes_S\omega),u_0)$, and where in addition these mappings are contractions with uniform contraction condition.
Then the conclusion follows by the parameter version of the Banach-fixed point Theorem together with Lemma \ref{locall5}.

\begin{remark}
An application of the main results of this article is to prove that the (pathwise) mild path--area solutions are global, see the forthcoming paper \cite{GLS12}. In addition, the results presented in this paper are the basement to prove that SEEs with non-trivial diffusion coefficients $G$ and driven by fractional Brownian--motions $B^H$ with $H\in (1/3,1/2]$ generate a random dynamical system, see also \cite{GLS12}. That property has been established recently by the same authors when dealing with a more regular fBm, namely $B^H$ with $H\in (1/2,1)$, see \cite{GLS09}.
\end{remark}

\section{Example}
Let $V=l_2$  be the space
of square additive sequences with values in $\RR$. In addition, let
$A$ be a negative symmetric operator defined on $D(-A)\subset l_2$
with compact inverse. In particular, we can assume that $-A$ has a
discrete spectrum $0<\lambda_1\le \lambda_2\le\cdots\le
\lambda_i\le \cdots\to\infty$  where the associated
eigenelements $(e_i)_{i\in\NN}$ form a complete orthonormal system
in $l_2$. The
spaces $D((-A)^\nu)=V_\nu$ are then defined by
\begin{equation*}
    \{u=(u_i)_{i\in\NN}\in l_2: \sum_i \lambda_i^{2\nu}
    u_i^2=:|u|_{V_\nu}^2<\infty\}.
\end{equation*}

Assume that there exists $\kappa>0$ such that the Hilbert-Schmidt embedding $V_\kappa \subset V$ holds true, and take $\hat V=V_\kappa$. Consider a sequence of functions
$(g_{ij})_{i,j\in \NN}$, with $g_{ij}:V\to \RR$, and define $G(u)$ for $u\in V$ by

\begin{equation*}
    G(u)v=\bigg(\sum_j
    g_{ij}(u)v_j\bigg)_{i\in\NN}\quad\text{for all }v\in V.
\end{equation*}
We assume that
\begin{align*}
\|G(u)\|_{L_2(V,V_\kappa)}^2&=\sum_j |G(u) e_j|_{V_\kappa}^2=\sum_j \bigg(\sum_i \lambda_i^{2\kappa} (G(u) e_j)_i^2\bigg)=\sum_{i,j} \lambda_i^{2\kappa} g_{ij}^2(u)\le c
\end{align*}
uniformly with respect to $u\in V$.

In addition, assume that $g_{ij}$ are four times  differentiable and their  derivatives  are uniformly bounded in the following way
\begin{align*}
&|Dg_{ij}(u)(e_k)|\le c_{g,1}^{ijk},\quad  |D^2g_{ij}(u)(e_k,h_1)|\le c_{g,2}^{ijk}|h_1|,\quad  |D^3g_{ij}(u)(e_k,h_1,h_2)|\le c_{g,3}^{ijk}|h_1||h_2|,\\
&|D^4g_{ij}(u)(e_k,h_1,h_2,h_3)|\le  c_{g,4}^{ijk}|h_1||h_2||h_3|\quad\text{for any }u\in V,
\end{align*}
and these bounds  satisfy
\begin{align*}
&\sum_{ijk}\lambda_i^{2\kappa}(c_{g,1}^{ijk})^2<\infty,\quad  \sum_{ijk}\lambda_i^{2\kappa}(c_{g,2}^{ijk})^2<\infty,\quad  \sum_{ijk}\lambda_i^{2\kappa}(c_{g,3}^{ijk})^2<\infty,\sum_{ijk}\lambda_i^{2\kappa}(c_{g,4}^{ijk})^2<\infty.
\end{align*}
To see for instance that $DG$ exists, note that by Taylor expansion
\begin{align*}
|g_{ij}(u+h)-g_{ij}(u)-Dg_{ij}(u)(h)|^2\le \frac12|  D^2g_{ij}(u+\eta h)(h,h)|^2
    \le (c_{g,2}^{ijk})^2|h|^4
\end{align*}
where $u,\, h\in V$ and $\eta\in [0,1]$.
In particular, we also note that
\begin{equation*}
    \sum_{jk}\bigg|DG(u)(e_k,e_j)\bigg|_{V_\kappa}^2
=\sum_{ijk}\lambda_i^{2\kappa}|Dg_{i,j}(u)(e_k)|^2 \le  \sum_{ijk}\lambda_i^{2\kappa}(c_{g,1}^{ijk})^2=:c_{DG}^2<\infty.
\end{equation*}
This condition ensures the Lipschitz
continuity of $G$ as well as the Hilbert-Schmidt property of   $DG$. Similarly, we obtain that $DG$ is also Lipschitz with respect  to the Hilbert-Schmidt norm. We also obtain the existence of  the second and third derivative. Hence the conditions on $G$ in Hypothesis {\bf (H)} hold.\\

The developed theory can be also applied to other examples of G, like kernel integrals, see \cite{GLS12}.

\section{Appendix}
In the appendix we prove some technical estimates related to $w=(u\otimes(\omega\otimes_S\omega))$.
\medskip

{\bf Proof of Lemma \ref{uomom}}

\begin{proof}
For $\tilde E \in L_2(V\otimes V,\hat V)$ we define $f_{\tilde E}:L_2(V,V\otimes V)\times V\to L_2(V,V \otimes
V)$ given by
\[
f_{\tilde E}(Q,u)=Q(\tilde E (u,\cdot) ).
\]
From \eqref{omegaSS} for smooth $\omega$ we have that
\begin{align*}
\tilde E(u\otimes(\omega\otimes_S\omega)(t))(s,q)=&
-(-1)^{\alpha} \int_s^q
\omega_S(r,t) \tilde
E(u(r)-u(s),\omega^\prime(r)) dr \\
=&-(-1)^{\alpha}\int_s^q f_{\tilde E}(\omega_S(r,t),u(r)-u(s))
\omega^\prime(r)dr.
\end{align*}
Following Theorem 3.3 in \cite{HuNu09}, we have
\begin{align}\label{tensor3}
\begin{split}
\int_s^q &f_{\tilde E}(\omega_S(r,t),u(r)-u(s))
\omega^\prime(r)dr\\=&(-1)^\alpha\int_s^q \hat D_{s+}^\alpha
f_{\tilde
E}(\omega_S(\cdot,t),u(\cdot)-u(s))[r]D^{1-\alpha}_{q-}\omega_{q-}[r]dr\\
&-(-1)^\alpha \int_s^q (\omega_S(r,t)-\omega_S(s,t)) D^\alpha_{s+}
\tilde E(u(\cdot)-u(s),\cdot)[r] D^{1-\alpha}_{q-}\omega_{q-}[r] dr\\
&-(-1)^\alpha \int_s^q D^\alpha_{s+} \omega_S
(\cdot,t) [r]\tilde E(u(r)-u(s),\cdot)D^{1-\alpha}_{q-}\omega_{q-}[r] dr\\
& +\int_s^q Df_{\tilde
E}(\omega_S(r,t),u(r)-u(s))(\omega_S(r,t)-\omega_S(s,t),u(r)-u(s))
\omega^\prime(r)dr.
\end{split}
\end{align}
Now we calculate the derivative of $f_{\tilde E}$:
\begin{align*}
\begin{split}
Df_{\tilde
E}&(\omega_S(r,t),u(r)-u(s))(\omega_S(r,t)-\omega_S(s,t),u(r)-u(s))\omega^\prime(r)\\
    =&(\omega_S(r,t)-\omega_S(s,t))\tilde
    E(u(r)-u(s),\omega^\prime(r))+\omega_S(r,t)\tilde
    E(u(r)-u(s),\omega^\prime(r))\\
    =&\tilde
    E(u(r)-u(s),\cdot)D_2(\omega_S(t)\otimes\omega)(s,r)+\omega_S(r,t)\tilde E D_2(u\otimes\omega)(s,r).
\end{split}
\end{align*}
Substituting the above expression in \eqref{tensor3}, after applying fractional integration (\ref{byparts}) to the last two terms, we have to calculate $ED^{1-\alpha}_{q-}(\omega_S(t)\otimes\omega)_{q-}(s,\cdot)[r]$, for $E\in L_2(V,\hat V)$, and $\tilde E D^{1-\alpha}_{q-} (u\otimes\omega)(t)(s,\cdot)_{q-}[r]$. First, we have

\begin{align*}
ED^{1-\alpha}_{q-}&(\omega_S(t)\otimes\omega)_{q-}(s,\cdot)[r]
=D^{1-\alpha}_{q-}E(\omega_S(t)\otimes\omega)_{q-}(s,\cdot)[r]\\
=&\frac{(-1)^{1-\alpha}}{\Gamma(\alpha)}
\bigg(\frac{E(\omega_S(t)\otimes \omega)(s,r)-E(\omega_S(t)\otimes\omega)(s,q)}{(q-r)^{1-\alpha}}\\
&\qquad + (1-\alpha)\int_r^q
\frac{E(\omega_S(t)\otimes\omega)(s,r)-E(\omega_S(t)\otimes\omega)(s,\theta)}{(\theta-r)^{2-\alpha}}
d\theta\bigg)\\
&=-\frac{(-1)^{1-\alpha}}{\Gamma(\alpha)}
\bigg(\frac{E(\omega_S(t)\otimes\omega)(r,q)+(\omega_S(r,t)-\omega_S(s,t))E(\omega(q)-\omega(r))}{(q-r)^{1-\alpha}}\\
&\qquad + (1-\alpha)\int_r^q
\frac{E(\omega_S(t)\otimes\omega)(r,\theta)+(\omega_S(r,t)-\omega_S(s,t))E(\omega(\theta)-\omega(r))}{(\theta-r)^{2-\alpha}}
d\theta\bigg)\\
&= -E{\mathcal D}^{1-\alpha}_{q-} (\omega_S(t)\otimes
\omega))[r]
+(\omega_S(r,t)-\omega_S(s,t))ED^{1-\alpha}_{q-}\omega_{q-}(r).
\end{align*}
Secondly, similar to (\ref{localas}) we obtain that
\begin{align*}
\tilde E D^{1-\alpha}_{q-} (u\otimes\omega)(t)(s,\cdot)_{q-}[r]=-\tilde
E{\mathcal
D}^{1-\alpha}_{q-} (u\otimes\omega)(t)[r]+\tilde E(u(r)-u(s),D^{1-\alpha}_{q-}\omega_{q-}(r)),
\end{align*}
and substituting the last two expressions into \eqref{tensor3} we obtain the conclusion.
\end{proof}

In the following we want to prove that $w$ is well--defined for $U\in {\hat W_{0,T}}$. To this end we will consider
a mapping
\begin{align*}
w(t,s,q)=w(U,\omega,(\omega\otimes_S\omega))(t,s,q)
\end{align*}
which coincides for smooth $\omega$ with the expression introduced in Lemma \ref{uomom}.

In order to prove the regularity of $w$ stated in Lemma \ref{locall9} we need several properties that we collect and prove in the following result.
\begin{lemma}\label{locall32}
(i) Let $\tilde E\in L_2(V\otimes V,\hat V)$ and let $v\in V\otimes V$ be fixed. Then the mapping
\begin{equation*}
  \tilde E\mapsto \tilde Ev
\end{equation*}
is in $L_2(L_2(V\otimes V,\hat V),V)$.\\
(ii)
Let $E\in L_2(V,\hat V)$ and let $u\in  V$ be fixed. Then the mapping
\begin{equation*}
  E\mapsto  Eu
\end{equation*}
is in $L_2(L_2(V,\hat V),V)$.\\

(iii) Let $\tilde E\in L_2(V\otimes V,\hat V)$ and $u\in V$. Then
\begin{equation*}
  \|\tilde E(u,\cdot)\|_{L_2(V,\hat V)}=\|\tilde E(u\otimes_V\cdot)\|_{L_2(V,\hat V)}\leq |u|\|\tilde E\|_{L_2(V\otimes V,\hat V)}.
\end{equation*}
\end{lemma}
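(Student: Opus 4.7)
\textbf{Proof plan for Lemma \ref{locall32}.} All three statements are bookkeeping exercises with orthonormal bases; the key structural input, used in (i) and (ii) but not in (iii), is the Hilbert--Schmidt embedding $\hat V\subset V$ from hypothesis \textbf{(H2)}, with constant $c_{V,\hat V}^2=\sum_l |h_l|_V^2<\infty$, where $(h_l)_{l\in\NN}$ is any orthonormal basis of $\hat V$.

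For (i), the plan is to exhibit a concrete orthonormal basis of $L_2(V\otimes V,\hat V)$ built from rank-one operators and sum explicitly. I would take the basis $F_{ijl}(\cdot):=(\cdot,e_i\otimes_V e_j)\,h_l$ where $(e_i\otimes_V e_j)_{i,j}$ is the basis of $V\otimes V$ from Section \ref{s2} and $(h_l)$ is an orthonormal basis of $\hat V$. Then the map $\tilde E\mapsto \tilde E v$ sends $F_{ijl}$ to $(v,e_i\otimes_V e_j)\,h_l$, whose $V$-norm squared is $|(v,e_i\otimes_V e_j)|^2\,|h_l|_V^2$. Summing over $(i,j,l)$ yields
\begin{equation*}
\sum_{i,j,l}|F_{ijl}v|_V^2 \;=\;\|v\|^2\sum_l|h_l|_V^2\;=\;c_{V,\hat V}^2\,\|v\|^2<\infty,
\end{equation*}
proving the Hilbert--Schmidt property (with an explicit norm bound $c_{V,\hat V}\|v\|$).

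For (ii) the argument is identical after replacing $(e_i\otimes_V e_j)$ by $(e_i)$ and $V\otimes V$ by $V$: with the basis $E_{il}(\cdot)=(\cdot,e_i)h_l$ of $L_2(V,\hat V)$, one computes $\sum_{i,l}|E_{il}u|_V^2=|u|^2 c_{V,\hat V}^2$, so the operator norm is bounded by $c_{V,\hat V}|u|$.

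For (iii) I would first observe that the two norms written in the statement are literally equal because $\tilde E(u,v)=\tilde E(u\otimes_V v)$ by the factorization of $\tilde E$ through $V\otimes V$ described in Section \ref{s2}. The bound then follows by expanding $u=\sum_i u_i e_i$, applying Cauchy--Schwarz inside the $\hat V$-norm:
\begin{equation*}
|\tilde E(u\otimes_V e_j)|_{\hat V}^2 = \Big|\sum_i u_i\,\tilde E(e_i\otimes_V e_j)\Big|_{\hat V}^2 \le |u|^2 \sum_i |\tilde E(e_i\otimes_V e_j)|_{\hat V}^2,
\end{equation*}
and summing over $j$ to recover $|u|^2\,\|\tilde E\|_{L_2(V\otimes V,\hat V)}^2$. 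There is no real obstacle in any of the three parts; the only point worth being careful about is invoking \textbf{(H2)} in (i)--(ii) so that the series $\sum_l|h_l|_V^2$ converges and produces the Hilbert--Schmidt conclusion rather than a mere boundedness statement.
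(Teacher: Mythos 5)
Your proposal is correct and follows essentially the same route as the paper: parts (i)--(ii) are proved by summing over the rank-one orthonormal basis of $L_2(V\otimes V,\hat V)$ (resp.\ $L_2(V,\hat V)$) and invoking the Hilbert--Schmidt embedding $\hat V\subset V$ to get $\sum_l|h_l|_V^2=c_{V,\hat V}^2<\infty$, and part (iii) is the same Cauchy--Schwarz computation after expanding $u$ in the basis $(e_i)$. No gaps.
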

\begin{proof}
Consider the separable Hilbert--space $L_2(V\otimes V,\hat V)$ equipped with the complete orthonormal basis
$(\tilde E_{ijk})_{i,j,k\in \NN}$ given by
\begin{equation*}
     \tilde E_{ijk}(e_l\otimes_Ve_{m})=\tilde E_{ijk}(e_l,e_{m})=\left\{\begin{array}{lcl}
    0&:& j\not= l\;\text{ or } k\not= m\\
    f_i &:& j= l\;\text{ and }  k=m.
    \end{array}
    \right.
\end{equation*}
We remind that $(e_i)_{i \in \NN}$ and $(f_i)_{i\in \NN}$ are, respectively, complete orthonormal basis of $V$ and $\hat V$.
Then for $v\in V \otimes V$,
\begin{equation*}
  \sqrt{\sum_{ijk}(\tilde E_{ijk}v)^2}=\sqrt{\sum_i|f_i|^2\sum_{jk}v_{jk}^2}=c_{V,\hat V}\|v\|
\end{equation*}
where $v_{jk}$ is the mode of $v$ with respect to $e_j\otimes_V e_k$. This completes (i).

The second statement can be proven similarly and therefore we omit its proof.

Finally, we have
\begin{align*}
\begin{split}
  \|\tilde E(u,\cdot)&\|_{L_2(V,\hat V)}^2= \sum_i|\tilde E(u,e_i)|_{\hat V}^2\\
  &=\sum_i|\sum_k u_k \tilde E(e_k,e_i)|_{\hat V}^2
  \le \sum_i\bigg(\sum_k |u_k| |\tilde E(e_k,e_i)|_{\hat V}\bigg)^2  \\
   & \le \sum_i\bigg(\big(\sum_k |u_k|^2\big)^\frac12 \big(\sum_k|\tilde E(e_k,e_i)|_{\hat V}^2\big)^\frac12\bigg)^2=|u|^2\|\tilde E\|_{L_2(V\otimes V,\hat V)}^2.
 \end{split}
\end{align*}
\end{proof}

\medskip

In what follows we abbreviate the notation in the following way: let us denote
\begin{equation*}
L_{2,\otimes}=L_2(L_2(V,\hat V),V\otimes V),\qquad L_{2,\otimes,\otimes}=L_2(L_2(V\otimes V,\hat V),V\otimes V).
\end{equation*}

{\bf Proof of Lemma \ref{locall9}.}

\smallskip

\begin{proof} Let us consider separately the three terms of  $w(t,s,q)$ given by (\ref{wholder}). Precisely, we start estimating the third term
\begin{align*}
I_3(\tilde E):=\int_s^q D_{s+}^{2\alpha-1}\omega_S(\cdot,t)[r]
D_{q-}^{1-\alpha}\dD_{q-}^{1-\alpha}\tilde E v[r]
dr.
\end{align*}
As we have seen in Lemma \ref{locall32} (i), for a fixed $v\in V\otimes V$ the mapping $L_2(V\otimes V,\hat V)\ni \tilde E\mapsto  \tilde Ev$
is in $L_2(L_2(V\otimes V,\hat V),V)$ where an estimate of the norm of this operator is given by $c_{V,\hat V}\|v\|$.
Then, since Lemma \ref{locall24} (i) in particular implies that $D_{s+}^{2\alpha-1}\omega_S(r,t)$ is in $L(V,V\otimes V)$, the mapping $\tilde E\mapsto I_3(\tilde E)$ is in $L_{2,\otimes,\otimes}$.
We have
\begin{align*}
  \|I_3(\cdot)\|_{L_{2,\otimes,\otimes}}& \leq \int_s^q \| D_{s+}^{2\alpha-1}\omega_S(\cdot,t)[r] \cdot D_{q-}^{1-\alpha}{\mathcal D}_{q-}^{1-\alpha} v[r]\|_{L_{2,\otimes,\otimes}}dr\\
  &\leq \int_s^q \|D_{s+}^{2\alpha-1}\omega_S(\cdot,t)[r]\|_{L(V,V\otimes V)}
  \|\cdot D_{q-}^{1-\alpha}{\mathcal D}_{q-}^{1-\alpha} v[r]\|_{L_2(L_2(V\otimes V,\hat V),V)}dr.
  \end{align*}
In order to estimate the second factor in the integrand of $I_3$, note that
due to Lemma \ref{locall3}, for $r\in (s,q)$ and $U\in
   \hat W_{0,T}$,  we obtain
\begin{align*}
    &\|D_{q-}^{1-\alpha}\dD_{q-}^{1-\alpha}v[r]\|\le
    c\|U\|_Wr^{-\beta}(q-r)^{\beta+\beta^\prime +2\alpha-2}
\end{align*}
and as we have said at the beginning of this proof
\begin{align*}
\|D_{q-}^{1-\alpha}{\mathcal D}_{q-}^{1-\alpha}\cdot v[r]\|_{L_2(L_2(V\otimes V,\hat V),V)} \leq cc_{V,\hat V} \|U\|_W r^{-\beta}(q-r)^{\beta+\beta^\prime +2\alpha-2}.
\end{align*}
On the other hand, by Lemma \ref{locall24} (i),
\begin{equation*}
   \|D_{s+}^{2\alpha-1}\omega_S(\cdot,t)[r]\|_{L(V,V\otimes V)}\le c\bigg(\frac{(t-r)^{\beta^\prime}}{(r-s)^{2\alpha-1}}
    +\int_s^r\frac{(r-\xi)^{\beta^\prime}}{(r-\xi)^{2\alpha}}d\xi\bigg)(|||\omega|||_{\beta^\prime}+|||\omega|||_{\beta^{\prime\prime}}).
\end{equation*}
Combining the previous estimates we can conclude
\begin{align*}
 \|I_3(\cdot)\|_{L_{2,\otimes,\otimes}}\le &c\|U\|_Ws^{-\beta}(t-s)^{\beta^\prime}(q-s)^{\beta+\beta^\prime},
\end{align*}
where $c$ depends on $|||\omega|||_{\beta^{\prime\prime}}$  and $|||\omega|||_{\beta^{\prime}}$. Next we deal with
\begin{align*}
I_2(\tilde E):=\int_s^q \hat
D_{s+}^{\alpha}\omega_S(\cdot,t)\tilde
E(u(\cdot)-u(s),\cdot)[r]D_{q-}^{1-\alpha}\omega_{q-}[r]dr.
\end{align*}
Observe that
\begin{align*}
\begin{split}
\|\hat
D_{s+}^{\alpha}&\omega_S(\cdot,t)\tilde E(u(\cdot)-u(s),\cdot)[r]D^{1-\alpha}_{q-}\omega_{q-}[r]\|_{L_{2,\otimes,\otimes}}\\
\leq  & \frac{\|\omega_S(r,t)\tilde E(u(r)-u(s),D^{1-\alpha}_{q-}\omega_{q-}[r])\|_{L_{2,\otimes,\otimes}}}
    {(r-s)^\alpha}\\
&+c \int_s^r\frac{\|(\omega_S(r,t)-\omega_S(\theta,t))\tilde E(u(r)-u(\theta),D^{1-\alpha}_{q-}\omega_{q-}[r])\|_{L_{2,\otimes,\otimes}}}{(r-\theta)^{1+\alpha}}
    d\theta \\
\leq &cc_{V,\hat V}(q-r)^{\alpha+\beta^\prime-1} s^{-\beta}\|\tilde E\|_{L_2(V\otimes V,\hat V)}\|u\|_{\beta,\sim}|||\omega|||_{\beta^\prime}\\
&\qquad \times (|||\omega|||_{\beta^\prime}+|||\omega|||_{\beta^{\prime\prime}}) (
(t-r)^{\beta^\prime}(r-s)^{\beta-\alpha}+
(r-s)^{\beta^\prime+\beta-\alpha}),
\end{split}
\end{align*}
which follows by Lemma \ref{locall24}. Hence, integrating the right hand side of the previous expression between $s$ and $q$ we get
\begin{align*}
 \|I_2(\cdot)\|_{L_{2,\otimes,\otimes}} & \leq c\|U\|_Ws^{-\beta}(t-s)^{\beta^\prime}(q-s)^{\beta+\beta^\prime},
\end{align*}
with $c$ depending on $|||\omega|||_{\beta^{\prime\prime}}$  and $|||\omega|||_{\beta^{\prime}}$. Now we estimate
\begin{align*}
I_1(\tilde E):=\int_s^qD_{s+}^{2\alpha-1}\tilde E(u(\cdot)-u(s),\cdot))[r]
D_{q-}^{1-\alpha}\dD_{q-}^{1-\alpha}(\omega_S(t)\otimes\omega)[r] dr.
\end{align*}
We emphasize that the expression $(\omega_S(t)\otimes\omega)$ is not well defined by an integral similar to \eqref{localeq1} for nonregular $\omega$. Nevertheless, splitting this expression as in Lemma \ref{uomom} we can express it in terms of $(\omega\otimes_S\omega)$
which is well defined by {\bf(H3)}. In addition, we can work with an approximation argument by the assumption {\bf(H3)}.\\
Now we split the previous integral into three integrals due to the definition (\ref{oS}). To treat the corresponding first expression let us write down the following estimate for $\alpha<\gamma<1,\,\beta^\prime<\gamma$:
\begin{align*}
  |D_{s+}^\alpha ((S(\tau-\cdot)&-{\rm id})(-A)^{-\beta^\prime}e)[r]| \le  c\bigg(\frac{|(S(\tau-r)-{\rm id})(-A)^{-\beta^\prime}e|}{(r-s)^{\alpha}}\\
  &+
\int_s^r\frac{|(S(\tau-r)-S(\tau-q))(-A)^{-\beta^\prime}e|}{(r-q)^{1+\alpha}}dq \bigg)\\
  \le & c\bigg(\frac{(\tau-r)^{\beta^\prime}|e|}{(r-s)^\alpha}+\int_s^r\frac{(|(S(r-q)-{\rm id})(-A)^{-\beta^\prime}S(\tau-r)e|}{(r-q)^{1+\alpha}}dq\bigg)\\
  \le &c\bigg(\frac{(\tau-r)^{\beta^\prime}}{(r-s)^\alpha}+\frac{(\tau-r)^{\beta^\prime-\gamma}}{(r-s)^{\alpha-\gamma}}\bigg)|e|,
\end{align*}
for $e\in V$, which follows by (\ref{eq1}) and (\ref{eq2}). Note we have
\begin{align*}
  \omega_S&(\tau,t) \int_s^\tau (S(\tau-r)-{\rm id})Ed\omega(r)=\omega_S(\tau,t)(-A)^{\beta^\prime}\int_s^\tau (S(\tau-r)-{\rm id})(-A)^{-\beta^\prime}E d\omega(r),
\end{align*}
hence, by the third statement of Lemma \ref{locall24} (i) and Lemma \ref{locall32} (ii), we conclude that
\begin{align*}
\| \omega_S&(\tau,t) \int_s^\tau (S(\tau-r)-{\rm id})\cdot d\omega(r)\|_{L_2,\otimes}\\
&\leq c |||\omega|||_{\beta^{\prime \prime} }\|\int_s^\tau (S(\tau-r)-{\rm id})(-A)^{-\beta^\prime}\cdot d\omega(r)\|_{L_2(L_2(V,\hat V),V)}\\
   &\le cc_{V,\hat V} |||\omega|||_{\beta^{\prime \prime} } \int_s^\tau|D_{s+}^\alpha((S(\tau-\cdot)-{\rm id})(-A)^{-\beta^\prime}[r]| |D_{\tau-}^{1-\alpha}\omega_{\tau-} [r]|dr\\
  &\le c |||\omega|||_{\beta^\prime} |||\omega|||_{\beta^{\prime \prime} } \int_s^\tau\bigg(\frac{(\tau-r)^{\beta^\prime}}{(r-s)^\alpha}+\frac{(\tau-r)^{\beta^\prime-\gamma}}{(r-s)^{\alpha-\gamma}}\bigg)(\tau-r)^{\beta^\prime+\alpha-1}dr\\
  &\le c |||\omega|||_{\beta^{\prime \prime} } |||\omega|||_{\beta^\prime} (\tau-s)^{2\beta^\prime}.
\end{align*}
For the other terms of the right-hand side of $(\omega_S \otimes \omega)$, by Hypothesis {\bf (H3)} and Lemma \ref{locall24} we have that
\begin{align*}
\|(-1)^{-\alpha} & \cdot(\omega\otimes_S\omega)(s,\tau)+(\omega_S(\tau,t)-\omega_S(s,t))\cdot(\omega(\tau)-\omega(s))\|_{L_{2,\otimes}}\le c   (\tau-s)^{2\beta^\prime},
\end{align*}
where $c$ depends on $\|(\omega\otimes_S\omega)\|_{2\beta^\prime}$ and $|||\omega|||_{\beta^{\prime \prime} }$. Since $\omega^n$ is smooth, the expression $\omega^n_S(t)\otimes \omega^n$ has the same structure as the integral \eqref{localeq1} and satisfies the Chen--equality. Moreover, we have the convergence of $(\omega^n_S(t)\otimes \omega^n)$ to $(\omega_S(t)\otimes\omega)$ in $L_{2,\otimes}$ such that the latter term satisfies the Chen--equality too. This convergence holds because all expressions in $(\omega_S\otimes \omega)$ depend linearly or  bilinearly on $\omega$ or
$(\omega\otimes_S\omega)$. Then the regularity of $(\omega_S(t)\otimes\omega)$
yields
\begin{align}\label{ea}
    &\|\cdot D_{q-}^{1-\alpha}\dD_{q-}^{1-\alpha}  (\omega_S(t)\otimes\omega)\|_{L_{2,\otimes}}\le
    c(q-r)^{2\beta^\prime+2\alpha-2}.
\end{align}
To establish the previous inequality we have to use that $(\omega_S\otimes \omega)$ is $2\beta^\prime$--H\"older continuous as well as the Chen--equality (in fact \eqref{ea} looks similar to the first property of Lemma \ref{locall3}, but it is easier to derive, and thus its complete proof is left to the reader). Finally, \eqref{ea} allows us to treat the  integral $I_1(\tilde E)$, obtaining a similar estimate to the ones we already have for $I_2$ and $I_3$ above.
\end{proof}

{\bf Proof of Lemma \ref{locall3}.}

\begin{proof}
We focus on proving the second estimate, since the first one is easier. Hence, we want to calculate $\|D_{t-}^{1-\alpha}\dD_{t-}^{1-\alpha}w(t,\cdot,\cdot)[r]\|_{L_{2,\otimes,\otimes}}$ for which we take into account the expression:
\begin{align}\label{localeq17}
\begin{split}
  & D_{t-}^{1-\alpha}\dD_{t-}^{1-\alpha}w(t,\cdot,\cdot)[r]=\frac{(-1)^\alpha}{\Gamma(\alpha)}\bigg(\frac{\dD_{t-}^{1-\alpha}w(t,\cdot,\cdot)[r]}{(t-r)^{1-\alpha}}\\
 &+(1-\alpha)\int_r^t\frac{\dD_{t-}^{1-\alpha}w(t,\cdot,\cdot)[r]-\dD_{t-}^{1-\alpha}w(t,\cdot,\cdot)[\theta]}{(\theta-r)^{2-\alpha}}d\theta\bigg)\\
 &=\frac{(-1)^\alpha}{\Gamma(\alpha)}\bigg(\frac{\dD_{t-}^{1-\alpha}w(t,\cdot,\cdot)[r]}{(t-r)^{1-\alpha}}\\
 &+(1-\alpha)\int_r^{t} \frac{ \frac{w(t,r,t)}{(t-r)^{1-\alpha}}-\frac{w(t,\theta ,t)}{(t-\theta )^{1-\alpha}}+(1-\alpha)\int_{r}^t\frac{w(t,r,\zeta)}{(\zeta-r)^{2-\alpha}}d\zeta-(1-\alpha)\int_{\theta}^t\frac{w(t,\theta,\zeta)}{(\zeta-\theta)^{2-\alpha}}d\zeta} {(\theta -r)^{2-\alpha}} d\theta \bigg).
\end{split}
\end{align}
We start by estimating the non-integral terms of the last expression. We obtain
\begin{align*}
  \bigg\|\frac{w(t,r,t)}{(t-r)^{1-\alpha}}&-\frac{w(t,\theta,t)}{(t-\theta)^{1-\alpha}}\bigg\|_{L_{2,\otimes,\otimes}}\le \frac{\|w(t,\theta,t)\|_{L_{2,\otimes,\otimes}}|(t-r)^{1-\alpha}-(t-\theta)^{1-\alpha}|}{(t-r)^{1-\alpha}(t-\theta)^{1-\alpha}}\\
  &+\frac{\|w(t,r,t)-w(t,\theta,t)\|_{L_{2,\otimes,\otimes}}}{(t-r)^{1-\alpha}} \\
  \le& c r^{-\beta}\|U\|_W\big( (t-\theta)^{\beta^\prime+\beta}(t-\theta)^{\beta^\prime}(\theta-r)^\beta(t-\theta)^{1-\alpha-\beta}(t-r)^{\alpha-1}(t-\theta)^{\alpha-1}\\
  &+(t-r)^{\beta^\prime}(\theta-r)^{\beta^\prime+\beta}(t-r)^{\alpha-1}+(\theta-r)^\beta(t-\theta)^{2\beta^\prime}(t-r)^{\alpha-1}\big).
\end{align*}
To get these estimates we have used Lemma \ref{locall9} and in addition, for the first expression on the right hand side we have used the trivial inequality
\begin{align}\label{localtr}
y^{1-\alpha}-x^{1-\alpha}\leq (y-x)^\beta x^{1-\alpha-\beta}
\end{align}
for any $x<y$, given in Lemma 6.1 of \cite{HuNu09}, while we have managed the second one by using the Chen--equality \eqref{localeq10b}.
Therefore, we obtain
\begin{align*}
  \bigg\|\frac{w(t,r,t)}{(t-r)^{1-\alpha}}&-\frac{w(t,\theta,t)}{(t-\theta)^{1-\alpha}}\bigg\|_{L_{2,\otimes,\otimes}}\le c r^{-\beta}\|U\|_W(\theta-r)^\beta(t-r)^{2\beta^\prime+\alpha-1}.
\end{align*}
In addition, for the integral terms of the last expression of (\ref{localeq17}) we have
\begin{align*}
  &\bigg\|\int_{r}^t\frac{w(t,r,\zeta)}{(\zeta-r)^{2-\alpha}}d\zeta-\int_{\theta}^t\frac{w(t,\theta,\zeta)}{(\zeta-\theta)^{2-\alpha}}d\zeta\bigg\|_{L_{2,\otimes,\otimes}} \\
  &\le \int_{r}^{\theta}\frac{\|w(t,r,\zeta)\|_{L_{2,\otimes,\otimes}} }{(\zeta-r)^{2-\alpha}}d\zeta
  +\bigg(\int_{\theta}^t\bigg\|\frac{w(t,r,\zeta)}{(\zeta-r)^{2-\alpha}}-\frac{w(t,\theta,\zeta)}{(\zeta-\theta)^{2-\alpha}}\bigg\|_{L_{2,\otimes,\otimes}}  d\zeta\bigg)=:B_1+B_2.
\end{align*}
On account of Lemma \ref{locall19b}, due to $\beta+\beta^\prime+\alpha>1$, for $B_1$ we have
\begin{equation*}
  B_1\le  \frac{c}{r^{\beta}}\|U\|_{W}\int_{r}^{\theta}\frac{(\zeta-r)^{\beta+\beta^\prime}(t-r)^{\beta^\prime}}{(\zeta-r)^{2-\alpha}}d\zeta\le \frac{c}{r^\beta}\|U\|_W(t-r)^{\beta^\prime}(\theta-r)^{\beta+\beta^\prime+\alpha-1},
\end{equation*}
and for $B_2$
\begin{align*}
  B_2\le & \int_{\theta}^{t}\frac{\|w(t,r,\zeta)-w(t,\theta,\zeta)\|_{L_{2,\otimes,\otimes}}}{(\zeta-r)^{2-\alpha}}d\zeta \\
  + & \int_{\theta}^t \frac{\|w(t,\theta,\zeta)\|_{L_{2,\otimes,\otimes}}|(\zeta-\theta)^{2-\alpha}-(\zeta-r)^{2-\alpha}|}{(\zeta-r)^{2-\alpha}(\zeta-\theta)^{2-\alpha}}d\zeta
  =:B_{21}+B_{22}.
\end{align*}
For $B_{22}$, using Lemma \ref{locall19b} and the estimate \eqref{localtr} we get:
\begin{align*}
  B_{22}\le & \frac{c}{r^{\beta}}\|U\|_W (t-\theta)^{\beta^\prime} (\theta-r)^\beta \int_{\theta}^t(\zeta-r)^{\alpha-2} (\zeta-\theta)^{\beta^\prime}d\zeta\\
  \le &  c r^{-\beta}\|U\|_W(\theta-r)^\beta(t-r)^{2\beta^\prime+\alpha-1}.
\end{align*}
To estimate $B_{21}$ we need the generalized Chen--equality  (\ref{localeq9b}), giving us
\begin{align*}
  B_{21}\le& \frac{c}{r^\beta}\|U\|_W \int_{\theta}^t\bigg(\frac{(\theta-r)^{\beta+\beta^\prime}(t-r)^{\beta^\prime}}{(\zeta-r)^{2-\alpha}}
  +\frac{(\theta-r)^\beta(\zeta-\theta)^{2\beta^\prime}}{(\zeta-r)^{2-\alpha}}\\
  &+\frac{(\theta-r)^\beta(t-\zeta)^{\beta^\prime}(\zeta-\theta)^{\beta^\prime}}{(\zeta-r)^{2-\alpha}}\bigg)d\zeta\\
  \le& c r^{-\beta}\|U\|_W\bigg( (\theta-r)^{\beta+\beta^\prime-1+\alpha}(t-r)^{\beta^\prime}+(\theta-r)^\beta(t-\theta)^{2\beta^\prime+\alpha-1}\\
  &+(t-\theta)^{\beta^\prime}(t-\theta)^{\beta^\prime+\alpha-1}(\theta-r)^\beta\bigg)\\
  \le & c r^{-\beta}\|U\|_W(\theta-r)^\beta(t-r)^{2\beta^\prime+\alpha-1}.
\end{align*}
Indeed, taking into account (\ref{localeq9b}) we have to estimate in particular the expression
\begin{equation*}
\|\omega_S(\zeta,t)S_\omega(\zeta,\theta)\cdot(u(\theta)-u(r),\cdot)\|\le \frac{c}{r^\beta}|||\omega|||_{\beta^{\prime\prime}}^2(t-\zeta)^{\beta^\prime}(\zeta-\theta)^{\beta^\prime}(\theta-r)^\beta \|U\|_W
\end{equation*}
which can be easily done by Lemma \ref{locall24} (i) and (ii). Therefore, the last expression of (\ref{localeq17}) gives us as estimate
\begin{align*}
c r^{-\beta}(t-r)^{2\beta^\prime +\alpha-1}\|U\|_W \int_r^t \frac{(\theta-r)^\beta}{(\theta-r)^{2-\alpha}}d\theta \leq c r^{-\beta}(t-r)^{2\beta^\prime+\beta +2\alpha-2} .
\end{align*}

Similar we obtain an estimate for the first expression on the right side of \eqref{localeq17} setting $\theta=t$. \\
Finally, note that the appearing constant $c$ depends on $|||\omega|||_{\beta^{\prime\prime}}$ and $\|(\omega\otimes_S\omega)\|_{2\beta^\prime}$.
\end{proof}

\begin{corollary}\label{coro3}
The proof of \eqref{localeq21} follows the same steps than the last proof, with the difference that we do not have to use \eqref{localeq9} but the Chen--equality (\ref{chen}).
\end{corollary}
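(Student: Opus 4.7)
The plan is to mimic the proof of the second estimate of Lemma \ref{locall3} step by step, with $v$ in place of $w(t,\cdot,\cdot)$ and with the Chen--equality \eqref{chen} replacing its generalized counterpart \eqref{localeq9b}. Since $v$ is a two--variable tensor (not three--variable as $w$), and since no $\omega_S$--type cross term will appear, the bookkeeping is lighter and the argument works in the scalar--valued norm instead of $L_{2,\otimes,\otimes}$.

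First I would write out
\begin{align*}
\dD_{t-}^{1-\alpha}v[r]-\dD_{t-}^{1-\alpha}v[q]
&=\frac{(-1)^{1-\alpha}}{\Gamma(\alpha)}\bigg[\frac{v(r,t)}{(t-r)^{1-\alpha}}-\frac{v(q,t)}{(t-q)^{1-\alpha}}\\
&\quad+(1-\alpha)\bigg(\int_r^t\frac{v(r,\zeta)}{(\zeta-r)^{2-\alpha}}d\zeta-\int_q^t\frac{v(q,\zeta)}{(\zeta-q)^{2-\alpha}}d\zeta\bigg)\bigg],
\end{align*}
and split the integral difference as
\begin{equation*}
\int_r^q\frac{v(r,\zeta)}{(\zeta-r)^{2-\alpha}}d\zeta+\int_q^t\bigg(\frac{v(r,\zeta)}{(\zeta-r)^{2-\alpha}}-\frac{v(q,\zeta)}{(\zeta-q)^{2-\alpha}}\bigg)d\zeta.
\end{equation*}
The $\int_r^q$ term is bounded directly by $c\,r^{-\beta}\|v\|_{\beta+\beta^\prime,\sim}(q-r)^{\alpha+\beta+\beta^\prime-1}$ via \eqref{localeq4}, once one uses $\|v(r,\zeta)\|\le c\,r^{-\beta}\|v\|_{\beta+\beta^\prime,\sim}(\zeta-r)^{\beta+\beta^\prime}$.

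For every remaining ``numerator difference'' of the form $v(r,\cdot)-v(q,\cdot)$ the key move is to apply the Chen--equality \eqref{chen}, which yields
\begin{equation*}
v(r,\zeta)-v(q,\zeta)=-v(r,q)-(u(q)-u(r))\otimes_V(\omega(\zeta)-\omega(q)).
\end{equation*}
Then $\|v(r,q)\|\le c\,r^{-\beta}\|v\|_{\beta+\beta^\prime,\sim}(q-r)^{\beta+\beta^\prime}$ together with the standard H\"older estimates $|u(q)-u(r)|\le c\,r^{-\beta}\|u\|_{\beta,\sim}(q-r)^\beta$ and $|\omega(\zeta)-\omega(q)|\le|||\omega|||_{\beta^\prime}(\zeta-q)^{\beta^\prime}$ provides all the bounds needed on the numerator side. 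Each ``denominator difference'' $(t-r)^{\alpha-1}-(t-q)^{\alpha-1}$ and $(\zeta-r)^{\alpha-2}-(\zeta-q)^{\alpha-2}$ is treated by the elementary inequality \eqref{localtr} used in Lemma \ref{locall3}. It is precisely at this step that the proof is simpler than the one for $w$: no $\omega_S$--cross term arises, so the constant $c$ depends only on $\|u\|_{\beta,\sim}$, $\|v\|_{\beta+\beta^\prime,\sim}$ and $|||\omega|||_{\beta^\prime}$, as claimed in \eqref{localeq21}.

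The main obstacle is, as always in these computations, the bookkeeping: collecting the several powers of $r$, $q-r$, $t-r$, $t-q$ and $\zeta-q$, evaluating the resulting Beta--type integrals via \eqref{localeq4}, and checking that the combined exponent on $(q-r)$ is exactly $\alpha+\beta+\beta^\prime-1$ while the residual powers of $(t-r)$ and $(t-q)$ remain bounded on $[0,T]$, so that the total estimate factors as $c\,r^{-\beta}(q-r)^{\alpha+\beta+\beta^\prime-1}$ in agreement with \eqref{localeq21}.
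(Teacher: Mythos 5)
Your proposal is correct and is exactly the paper's intended argument: the corollary's entire proof in the paper is the one-sentence instruction to repeat the proof of Lemma \ref{locall3} with $v$ in place of $w(t,\cdot,\cdot)$ and the Chen--equality \eqref{chen} in place of the generalized one \eqref{localeq9b}, which is precisely the decomposition, the use of \eqref{localtr} for the denominator differences, and the Beta--integral bookkeeping via \eqref{localeq4} that you describe. Two minor remarks: the Chen--equality actually gives $v(r,\zeta)-v(q,\zeta)=v(r,q)+(u(q)-u(r))\otimes_V(\omega(\zeta)-\omega(q))$ (no minus signs, immaterial once norms are taken), and in that cross term the increment of $\omega$ lives on $[q,\zeta]$ rather than $[r,q]$, so its exponent $\beta^\prime$ naturally attaches to $t-q$ rather than to $q-r$ --- this is the one genuinely delicate point in the claim that the full power $(q-r)^{\alpha+\beta+\beta^\prime-1}$ is attained, but your write-up glosses over it to exactly the same extent as the paper does.
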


\end{document}